\numberwithin{equation}{section}
\newtheoremstyle{mydef}
  {}		
  {}		
  {}		
  {}		
  {\scshape}	
  {. }		
  { }		
  {\thmname{#1}\thmnumber{ #2}\thmnote{ #3}}	
\newtheorem{theorem}{Theorem}[section]
\newtheorem*{theorem*}{Theorem}
\newtheorem*{theoremA}{Theorem A}
\newtheorem*{theoremB}{Theorem B}
\newtheorem{proposition}[theorem]{Proposition}
\newtheorem*{proposition*}{Proposition}
\newtheorem{lemma}[theorem]{Lemma}
\newtheorem*{lemma*}{Lemma}
\newtheorem*{corollary*}{Corollary}
\theoremstyle{definition}
\theoremstyle{remark}
\newtheorem*{acknowledgements}{{\bf Acknowledgements}}
\newtheorem*{NotationConventions}{{\bf Notation and conventions}}
\DeclareMathAlphabet{\mathsfit}{T1}{\sfdefault}{\mddefault}{\sldefault}
\SetMathAlphabet{\mathsfit}{bold}{T1}{\sfdefault}{\bfdefault}{\sldefault}
\newcommand{\CC}{\ensuremath{\mathbbmss{C}}}
\newcommand{\NN}{\ensuremath{\mathbbmss{N}}}
\newcommand{\RR}{\ensuremath{\mathbbmss{R}}}
\newcommand{\TT}{\ensuremath{\mathbbmss{T}}} 
\newcommand{\ZZ}{\ensuremath{\mathbbmss{Z}}}
\newcommand{\Ccal}{\mathscr{C}}
\newcommand{\Dcal}{\mathscr{D}}
\newcommand{\Ecal}{\mathscr{E}}
\newcommand{\Hcal}{\mathscr{H}}
\newcommand{\Ical}{\mathscr{I}}
\newcommand{\Wcal}{\mathscr{W}}
\newcommand{\CW}{\mathscr{CW}}
\newcommand{\gfrak}{\mathfrak{g}}
\def\W{{\rm W}}
\newcommand{\ad}{\operatorname{ad}}
\newcommand{\tr}{\operatorname{tr}}
\newcommand{\cosk}{\operatorname{cosk}}
\newcommand{\Cosk}{\operatorname{Cosk}}
\newcommand{\Ex}{\operatorname{Ex}}
\newcommand{\End}{\operatorname{End}}
\newcommand{\Hom}{\operatorname{Hom}}
\newcommand{\CE}{\operatorname{CE}}
\newcommand{\dW}{d_{\W}}
\newcommand{\dCE}{\delta_{\CE}}
\newcommand{\dV}{\delta_{V}}
\newcommand{\dA}{\delta_{A}}
\newcommand{\Loc}{\operatorname{\mathbf{Loc}}}
\newcommand{\DGVect}{\operatorname{\mathbf{DGVect}}}
\newcommand{\DGMod}{\operatorname{\mathbf{DGMod}}}
\newcommand{\DGRep}{\operatorname{\mathbf{DGRep}}}
\newcommand{\InfLoc}{\operatorname{\mathbf{Inf}\,\mathbf{Loc}}}
\newcommand{\Rep}{\operatorname{\mathbf{Rep}}}
\newcommand{\uH}{\operatorname{H}}
\newcommand{\Ob}{\operatorname{Ob}}
\newcommand{\A}{A}
\renewcommand{\L}{L}
\newcommand{\s}{\mathsf{s}}
\renewcommand{\u}{\mathsf{u}}
\newcommand{\G}{\mathrm{G}}
\newcommand{\V}{\mathrm{V}}
\newcommand{\C}{\operatorname{C}}
\newcommand{\uT}{\operatorname{T}}
\def\bas{{\rm bas}}
\def\hor{{\rm hor}}
\def\inv{{\rm inv}}
\def\id{{\rm id}}
\def\pr{{\rm pr}}
\def\DR{{\rm DR}}
\def\uS{{\rm S}}
\def\Set{{\mathbf{Set}}}
\def\sSet{{\mathbf{sSet}}}
\DeclareFontFamily{U}{matha}{\hyphenchar\font45}
\DeclareFontShape{U}{matha}{m}{n}{
      <5> <6> <7> <8> <9> <10> gen * matha
      <10.95> matha10 <12> <14.4> <17.28> <20.74> <24.88> matha12
      }{}
\DeclareSymbolFont{matha}{U}{matha}{m}{n}
\DeclareMathSymbol{\abxcup}{\mathbin}{matha}{'131}
\newcommand*{\sbullet}{\raisebox{0.1ex}{\scalebox{0.6}{$\bullet$}}}
\DeclareMathAlphabet{\mathbfit}{OML}{cmm}{b}{it}
\begin{document}

\title{Chern-Weil theory for $\infty$-local systems}

\author{Camilo Arias Abad\footnote{Escuela de Matem\'{a}ticas, Universidad Nacional de Colombia Sede Medell\'{i}n, email: camiloariasabad@gmail.com}, Santiago Pineda Montoya\footnote{Escuela de Matem\'{a}ticas, Universidad Nacional de Colombia Sede Medell\'{i}n, email: sapinedamo@unal.edu.co}, and Alexander Quintero V\'{e}lez\footnote{Escuela de Matem\'{a}ticas, Universidad Nacional de Colombia Sede Medell\'{i}n, email: aquinte2@unal.edu.co}}

 \maketitle

\begin{abstract}
Let $G$ be a compact connected Lie group. We show that the category $\Loc_\infty(BG)$ of $\infty$-local systems on the classifying space of $G$, can be described infinitesimally as the category  $\InfLoc_{\infty}(\gfrak)$ of basic $\gfrak$-$\L_\infty$ spaces. Moreover, we show that, given a principal bundle $\pi \colon P \to X$ with structure group $G$ and any connection $\theta$ on $P$, there is a DG functor  
$$
\CW_{\theta} \colon \InfLoc_{\infty}(\gfrak) \longrightarrow \Loc_{\infty}(X),
$$
which corresponds to the pullback functor by the classifying map of $P$. The DG functors associated to different connections are related by an $\A_\infty$-natural isomorphism. This construction provides a categorification of the Chern-Weil homomorphism, which is recovered by applying the functor $\CW_{\theta}$ to the endomorphisms of the constant local system. 
\end{abstract}

\tableofcontents

\section{Introduction}
Given a compact connected Lie group $G$ with Lie algebra $\gfrak$, the cohomology of the classifying space $BG$ is isomorphic to the algebra of invariant polynomials $(\uS^{\sbullet} \gfrak^*)_{\inv}$ on $\gfrak$. Moreover, given a principal bundle $\pi\colon P \to X$ with structure group $G$ and any connection $\theta$ on $P$, the Chern-Weil homomorphism is an algebra map
$$
c_{\theta *}\colon (\uS^{\sbullet} \gfrak^*)_{\inv} \to \uH^{\sbullet}_{\mathrm{DR}}(X),
$$
which can be described geometrically in terms of the curvature, and corresponds to the map induced in cohomology by the classifying map $f\colon X \to BG$ of $P$. Our goal is to categorify this construction by replacing the cohomology of $BG$ by a more abstract invariant, the DG category $\Loc_\infty(BG)$ of $\infty$-local systems on $BG$.

 A local system on a topological space $X$ is a representation of the fundamental groupoid of $X$. By replacing the fundamental groupoid $\pi_1 X$ of $X$ by the $\infty$-groupoid, $\pi_{\infty} X_{\sbullet}$, one obtains the theory of $\infty$-local systems. An $\infty$-local system on $X$ is a representation (up to homotopy) of $\pi_{\infty} X_{\sbullet}$. Just like ordinary local systems can be described in several different ways, for instance, as flat vector bundles, the DG category of $\infty$-local systems also admits various descriptions.
 \begin{center}
  \begin{tabular}{| c | c | }
\hline 
   {\bf Point of view}   & {\bf $\infty$-local system}   \\ \hline
   Infinitesimal & Flat superconnection \\ 
 Simplicial & Representation of $\pi_{\infty}X_{\sbullet}$\\ 
 Topological & Representation of $\C_{\sbullet}(\Omega^{\mathrm{M}}X) $ \\   
\hline
  \end{tabular}
\end{center}
Here $\C_{\sbullet}(\Omega^{\mathrm{M}}X)$ denotes the algebra of singular chains on the based Moore loop space of $X$. Each of these notions of $\infty$-local system can be organized into a DG category, and these categories are quasi-equivalent. The proofs of these results can be found in \cite{Holstein2015,Block-Smith2014,Abad-Schatz2013,abadflat}.  The equivalence between the ``infinitesimal'' and the ``simplicial'' points of view is known as the higher Riemann-Hilbert correspondence.

In analogy with the Chern-Weil construction, we introduce a DG category $ \InfLoc_{\infty}(\gfrak)$, which is the infinitesimal counterpart of $\Loc_\infty(BG)$. This DG category is defined as follows. Given a Lie algebra $\gfrak$, we consider the DG Lie algebra $\TT \gfrak$, which is universal for the Cartan relations. The DG category $ \InfLoc_{\infty}(\gfrak)$ is a certain subcategory of the DG category of $\L_\infty$-representations of $\TT \gfrak$. 

Our first theorem is the construction of a DG functor that extends the Chern-Weil homomorphism.

\begin{theoremA}\label{theoremA}
Let $G$ be a Lie group and let $\pi \colon P \to X$ be a principal bundle with structure group $G$. Then, for any connection $\theta$ on $P$, there is a natural DG functor 
$$
\CW_{\theta} \colon \InfLoc_{\infty}(\gfrak) \longrightarrow \Loc_{\infty}(X).
$$
Moreover, for any two connections $\theta$ and $\theta'$ on $P$, there is an $A_{\infty}$-natural isomorphism between $\CW_{\theta}$ and $\CW_{\theta'}$. 
\end{theoremA}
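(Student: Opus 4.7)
The plan is to model the classical Chern-Weil construction, replacing invariant polynomials by $\L_\infty$-representations of $\TT\gfrak$ and cohomology classes by flat superconnections. The starting point is the Cartan calculus on $P$: the contractions $\iota_X$, the Lie derivatives $\mathcal{L}_X$ (for $X\in\gfrak$) and the de Rham differential $d$ on $\Omega(P)$ assemble into a DG Lie algebra realization of $\TT\gfrak$. A choice of connection $\theta$ furnishes, in addition, a horizontal projection and the curvature $F_\theta\in\Omega^2(P,\gfrak)$, which together allow one to realize the $\L_\infty$-action of $\TT\gfrak$ on any $V\in\InfLoc_\infty(\gfrak)$ as a degree-one operator $\mathbb{D}_\theta^V$ on $V$-valued forms on $P$. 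Concretely, $\mathbb{D}_\theta^V$ is built out of the internal differential $\dV$, the de Rham differential, the covariant derivative associated with $\theta$, and the higher contractions of $F_\theta$ coming from the $\L_\infty$-brackets of $V$. Flatness $(\mathbb{D}_\theta^V)^2=0$ should then reduce to the $\L_\infty$-equations for $V$ combined with the Bianchi identity for $F_\theta$.

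Since $V$ is basic, $\mathbb{D}_\theta^V$ commutes with the $\gfrak$-action and is horizontal, so it descends along $\pi\colon P\to X$ to a flat $\ZZ$-graded superconnection on the associated bundle $P\times_G V$. Under the infinitesimal model of $\Loc_\infty(X)$, this is precisely the data of an $\infty$-local system, so the assignment $V\mapsto(P\times_G V,\mathbb{D}_\theta^V)$ defines $\CW_\theta$ on objects. The extension to morphisms is straightforward: a morphism in $\InfLoc_\infty(\gfrak)$ is a collection of equivariant maps compatible with the $\L_\infty$-structures, and these descend to $X$ as morphisms of flat superconnections. That $\CW_\theta$ is a DG functor---compatibility with the Hom-differentials and compositions---amounts to a direct verification once the structures are set up carefully, and naturality in $P$ is automatic from the canonical character of the construction.

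For two connections $\theta_0$ and $\theta_1$, the affine interpolation $\theta_t:=(1-t)\theta_0+t\theta_1$ defines a connection on the pullback bundle $P\times[0,1]\to X\times[0,1]$, and the first half of the argument produces an $\infty$-local system on $X\times[0,1]$ whose restrictions to the endpoints are $\CW_{\theta_0}(V)$ and $\CW_{\theta_1}(V)$. Integrating the components of its superconnection along the fiber $[0,1]$ yields the degree-zero component of a natural transformation, and iterating the construction with families of connections parametrised by standard simplices---in the spirit of Chen's iterated integrals and of the higher Chern--Simons transgressions---gives all the higher components of an $A_\infty$-natural transformation $\eta_{\theta_0,\theta_1}$. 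The main obstacle, and where the bulk of the work lies, is verifying that these iterated integrals satisfy the full list of $A_\infty$-natural-transformation identities and that $\eta_{\theta_0,\theta_1}$ is an isomorphism in $\Loc_\infty(X)$. This reduces to a Stokes-theorem calculation on the simplicial space of connections, but bookkeeping of signs and of the interplay between the $\L_\infty$-brackets on $V$ and the convolution of forms on the simplices is delicate; the inverse can then be produced by swapping $\theta_0$ and $\theta_1$ and showing, via an analogous integration over the $2$-simplex of connections $(\theta_0,\theta_1,\theta_0)$, that the two compositions are homotopic to the identities.
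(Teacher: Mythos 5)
Your proposal follows essentially the same route as the paper: build a flat basic superconnection on $P$ from the connection and curvature data (the paper packages your ``covariant derivative plus higher contractions of $F_\theta$'' as $(c_\theta\otimes 1)\alpha_V$ via the characteristic homomorphism $c_\theta\colon \W\gfrak\to\Omega^{\sbullet}(P)$ of the Weil algebra, which makes flatness and basicness immediate from $c_\theta$ being a $\gfrak$-DG algebra map), check it descends along $\pi$, and then handle two connections by interpolating over $[0,1]$ and invoking the iterated-integral homotopy invariance of $\infty$-local systems (the paper's Proposition~\ref{prop:2.3}) to produce the $\A_\infty$-natural isomorphism. The Stokes-theorem bookkeeping you flag as the main remaining work is exactly what that cited proposition encapsulates, so your outline is correct and matches the paper's argument.
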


Our second theorem states that, as expected, the functor $\CW_{\theta}$ corresponds to the pullback functor by the classifying map.

\begin{theoremB}
Given a compact connected Lie group $G$, there is a natural $\A_{\infty}$-functor 
$$
\Wcal \colon \InfLoc_{\infty}(\gfrak) \to \Loc_{\infty}(BG)
$$
which is an $\A_{\infty}$-quasi-equivalence. Moreover, if we let $\pi \colon P \to X$ be a principal bundle with structure group $G$, $\theta$ be any connection on $P$, and $f \colon X \to BG_n$ be the classifying map of $P$, then there exists an $\A_{\infty}$-natural isomorphism between the $\A_{\infty}$-functors $\Ical \circ \CW_{\theta}$ and $(\varphi_n \circ f)_{\sbullet}^* \circ \Wcal$ from $ \InfLoc_{\infty}(\gfrak)$ to $\Rep_{\infty}(\pi_{\infty}X_{\sbullet})$. Here $\Ical$ is the integration $\A_\infty$-functor provided by the higher Riemann-Hilbert correspondence, and $\varphi_n$ is the canonical map from $BG_n$ to $BG$.
\end{theoremB}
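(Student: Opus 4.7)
The plan is to construct $\Wcal$ from a tower of smooth finite-dimensional approximations of $BG$ and then verify the quasi-equivalence and the compatibility with classifying maps separately. I would begin by fixing an increasing sequence of compact smooth manifolds $BG_1 \hookrightarrow BG_2 \hookrightarrow \cdots$ whose colimit is weakly equivalent to $BG$ (Grassmannians for classical groups, or a Narasimhan--Ramanan approximation in general), together with universal principal $G$-bundles $EG_n \to BG_n$ carrying universal connections $\theta_n^{\mathrm{univ}}$. Theorem A then supplies DG functors $\CW_{\theta_n^{\mathrm{univ}}} \colon \InfLoc_\infty(\gfrak) \to \Loc_\infty(BG_n)$, and its naturality produces $\A_\infty$-natural isomorphisms between $\varphi_{n,n+1}^* \circ \CW_{\theta_{n+1}^{\mathrm{univ}}}$ and $\CW_{\theta_n^{\mathrm{univ}}}$ along the inclusions $\varphi_{n,n+1}\colon BG_n \hookrightarrow BG_{n+1}$. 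Composing with the integration functors of the higher Riemann--Hilbert correspondence yields a coherent tower of $\A_\infty$-functors, and $\Wcal$ is defined as its homotopy limit, landing in the simplicial model $\Rep_\infty(\pi_\infty BG_\sbullet) \simeq \Loc_\infty(BG)$.

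To show that $\Wcal$ is an $\A_\infty$-quasi-equivalence, I would compare Hom-complexes. On the trivial object, the endomorphism DG algebra in $\InfLoc_\infty(\gfrak)$ computes the basic subalgebra of the Weil algebra, which for compact connected $G$ equals $(\uS^\sbullet\gfrak^*)_\inv$, while the endomorphisms of the trivial $\infty$-local system on $BG$ compute $\uH_{\DR}^\sbullet(BG)$, and these two algebras are identified by classical Chern--Weil theory. The comparison is then extended to arbitrary objects by a d\'evissage argument, using that $BG$ is simply connected and that both sides are generated in an $\A_\infty$-sense by the trivial object: on the infinitesimal side via the polynomial filtration of the Weil algebra, on the topological side via the Postnikov tower of $BG$. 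Essential surjectivity then follows from fully faithfulness.

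For the compatibility assertion, I would argue by naturality. Given $\pi\colon P\to X$ with classifying map $f\colon X\to BG_n$, there is a bundle isomorphism $P \cong f^*EG_n$ that carries $\theta$ to a connection $\theta'$ on $f^*EG_n$. Naturality of the Chern--Weil functor in the bundle yields an $\A_\infty$-natural isomorphism
\[
(\varphi_n\circ f)_\sbullet^* \circ \Wcal \;\simeq\; \Ical \circ \CW_{f^*\theta_n^{\mathrm{univ}}},
\]
while the second clause of Theorem A supplies an $\A_\infty$-natural isomorphism between $\CW_{f^*\theta_n^{\mathrm{univ}}}$ and $\CW_\theta$, since both are Chern--Weil functors for connections on the same bundle. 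Composing these $\A_\infty$-natural isomorphisms produces the required identification.

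The hardest part will be fully faithfulness of $\Wcal$ beyond the trivial object. On the trivial object the comparison reduces to classical Chern--Weil theory, but extending it to arbitrary $\infty$-local systems requires a structural identification of both DG categories with DG modules over a common DG algebra (built from $(\uS^\sbullet\gfrak^*)_\inv$ or the Weil algebra itself) via a Koszul-duality argument, or alternatively a carefully controlled spectral sequence associated to the tower $\{BG_n\}$. A secondary technical point is showing that the homotopy limit defining $\Wcal$ is well-posed up to $\A_\infty$-equivalence and independent of the chosen approximation; this should follow from standard model-categorical techniques for $\A_\infty$-categories.
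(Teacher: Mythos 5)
Your overall architecture (finite-dimensional approximations $BG_n$, Chern--Weil functors from Theorem~A composed with integration, naturality for the compatibility clause) matches the paper's, and your treatment of the second assertion is essentially the paper's argument: one gets $(\varphi_n\circ f)_{\sbullet}^*\circ\Wcal = \Ical\circ\CW_{f^*\omega}$ by naturality and then invokes the second clause of Theorem~A together with whiskering by $\Ical$ (Lemma~\ref{lem:2.2}). However, there is a genuine logical gap in your treatment of the quasi-equivalence. The assertion that ``essential surjectivity then follows from fully faithfulness'' is false: a quasi fully faithful DG functor need not be quasi essentially surjective (any full DG subcategory inclusion is a counterexample). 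The paper devotes most of its effort precisely to this point: it filters both categories by the amplitude $F^m$ of the underlying graded vector space, uses coskeleta and Kan's $\Ex^{\infty}$ (Lemma~\ref{lem:4.11}) to reduce to a fixed $BG_n$ with $\V_k(\CC^n)$ sufficiently highly connected, and then proves essential surjectivity of $\CW_{\omega}^{(n)}$ on $F^m$ by induction on $\dim W$, peeling off rank-one extensions and lifting the extension class $\xi$ to a closed element of $(\W\gfrak\otimes\Hom(V',\RR))_{\bas}$ via the surjectivity-in-a-range statement of Proposition~\ref{prop:4.14}. Your appeal to ``generation by the trivial object via the Postnikov tower'' gestures at this but does not supply the needed quantitative control (the connectivity estimates and the range in which the Chern--Weil comparison is an isomorphism), without which the lifting of extension classes cannot be performed.

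Two further points. For fully faithfulness you defer to Koszul duality or an unspecified spectral sequence over the tower $\{BG_n\}$; the paper's actual argument is more direct and works for arbitrary pairs of objects at once: filter $(\W\gfrak\otimes\Hom(V,V'))_{\bas}$ by Weil-algebra degree and $\underline{\Hom}^{\sbullet}(\Wcal(V),\Wcal(V'))$ by cochain degree, observe that since $BG$ is simply connected and the objects are basic the coefficient systems $\Hcal^{\sbullet}(\Hom(V,V'))$ are trivial, and identify the map of second pages with the classical isomorphism $\uH^{\sbullet}((\W\gfrak)_{\bas})\cong\uH^{\sbullet}(BG)$ tensored with the identity. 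Finally, your homotopy-limit construction of $\Wcal$ introduces avoidable technical overhead: the paper chooses the canonical connections $\omega$ on the Stiefel bundles, which satisfy $j^*\omega=\omega$ on the nose (Proposition~\ref{prop:4.2}), so the tower of functors commutes strictly and $\Wcal$ is defined directly on the colimit, with no coherence or model-categorical issues to resolve.
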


Let us explain the relationship between our results and the DG category of modules over the algebra $\C_{\sbullet}(G)$ of singular chains on $G$. The correspondence between representations of $\TT \gfrak$ and modules over $\C_{\sbullet}(G)$ was studied in \cite{arias2019singular, abad2020singular}. In these works it is proved that, for simply connected $G$, the category of representations of $\TT \gfrak$ is equivalent to the category of ``sufficiently smooth'' modules over $\C_{\sbullet}(G)$. Moreover, if $G$ is also compact, the DG enhancements of these categories are also $\A_\infty$-quasi-equivalent. Precisely, the following is proved in \cite{abad2020singular}.

\begin{theorem*}
Suppose that $G$ is compact and simply connected. The DG categories $\DGRep(\TT\gfrak)$ of representations of $\TT \gfrak$ and $\DGMod(\C_{\sbullet}(G))$  of modules over $\C_{\sbullet}(G)$ are $\A_{\infty}$-quasi-equivalent. 
\end{theorem*}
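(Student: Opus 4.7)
The plan is to construct, in the spirit of the higher Riemann–Hilbert correspondence, an $\A_\infty$-functor
$$
\Fcal \colon \DGRep(\TT\gfrak) \longrightarrow \DGMod(\C_{\sbullet}(G))
$$
by an explicit integration procedure, and to verify that its essential image coincides with the ``sufficiently smooth'' modules and that it induces quasi-isomorphisms on Hom-complexes. First I would reinterpret a representation of $\TT\gfrak$ on a complex $V$ as a graded vector space equipped with operators $\iota_x$, $\mathcal{L}_x$, $d$ satisfying the Cartan relations. Contracting these with the left-invariant Maurer–Cartan form $\theta \in \Omega^1(G) \otimes \gfrak$ produces a flat superconnection on the trivial bundle $V \times G \to G$. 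Chen-type iterated integration of this superconnection over smooth singular simplices $\sigma \colon \Delta^n \to G$ then yields operators
$$
\mu_n \colon \C_n(G) \otimes V \longrightarrow V
$$
of appropriate degree; Stokes' theorem together with flatness of the superconnection implies that the $\mu_n$ satisfy the $\A_\infty$-module relations for the Pontryagin DG algebra structure on $\C_{\sbullet}(G)$. Higher coherence on morphism complexes is obtained by transferring the obvious Hom-complexes of $\TT\gfrak$-representations through the same integration procedure.

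The next step is to show that $\Fcal$ is an $\A_\infty$-quasi-equivalence. Essential surjectivity onto sufficiently smooth modules can be established by an inverse construction: differentiation at the identity of $G$ recovers, from such a module, an action of $\gfrak$ which together with the natural contractions extends to a $\TT\gfrak$-representation. Full faithfulness on cohomology reduces to a statement about Hom-complexes: the Hom-complex between two $\TT\gfrak$-representations is a Chern–Weil-type basic subcomplex, and one must verify that under $\Fcal$ it maps quasi-isomorphically to the derived Hom of $\C_{\sbullet}(G)$-modules. Here compactness of $G$ permits Haar averaging and guarantees the existence of harmonic representatives, while the simply connected hypothesis gives $G \simeq \Omega BG$ and allows a bar / Eilenberg–Moore spectral sequence to be used to compare both sides at the level of cohomology.

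The main obstacle will be verifying that Chen's iterated integrals really assemble into a full $\A_\infty$-functor rather than merely a chain map on morphism spaces: one must track combinatorially how products of simplices decompose under the boundary operator and match this decomposition, via Stokes, both with the Cartan relations in $\TT\gfrak$ and with the bar-differential that encodes the Pontryagin product on $\C_{\sbullet}(G)$. A secondary difficulty is pinning down the correct notion of ``sufficiently smooth'' $\C_{\sbullet}(G)$-module so that $\Fcal$ lands in, and surjects onto, that subcategory; the natural choice is to restrict to modules arising as homotopy colimits of finite-dimensional modules on which the action factors through smooth chains. Once these points are in place, the quasi-equivalence follows from a two-out-of-three argument applied to a set of generators, using the exterior-algebra structure of $\uH^{\sbullet}(G;\RR)$ for compact simply connected $G$ to reduce to a finite computation.
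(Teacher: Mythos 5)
You should first be aware that this paper does not prove the statement you were given: it is quoted verbatim from \cite{abad2020singular} (``Singular chains on Lie groups and the Cartan relations II'') and used here only as background, so there is no in-paper proof to compare your attempt against. The route the present paper actually takes to connect $\TT\gfrak$-representations to topology is infinitesimal and goes through $BG$ rather than $G$: it builds the Chern--Weil DG functor out of the Weil algebra and the canonical connections on Stiefel bundles (Theorems A and B), and only afterwards \emph{deduces} that $\C_{\sbullet}(G)$-modules are $\infty$-local systems on $BG$ by combining Theorem B with the quoted theorem. Your sketch, by contrast, is in the spirit of the cited companion papers: one does build a flat superconnection on the trivial bundle over $G$ from the operators $i_x$, $L_x$ and the Maurer--Cartan form (essentially $D = d + L_{\theta} - \tfrac{1}{2}\, i_{[\theta,\theta]}$, whose flatness is exactly the Cartan relations together with $d\theta + \tfrac{1}{2}[\theta,\theta] = 0$) and integrates it by Chen-type holonomies.

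As a proof, however, the sketch has genuine gaps. First, the step you yourself flag as the main obstacle is where all the work lies and is not resolved: Stokes' theorem over a single simplex only yields compatibility with the simplicial boundary, i.e.\ a representation of $\pi_{\infty}G_{\sbullet}$; to obtain an $\A_{\infty}$-module over the Pontryagin DG algebra $\C_{\sbullet}(G)$ you must additionally relate the holonomy over the Eilenberg--Zilber shuffle decomposition of $\Delta_p \times \Delta_q$ (the product $\sigma \cdot \tau$ of two simplices) to the composite of the individual holonomies, which uses left-invariance of $\theta$ and a separate combinatorial argument. Second, the theorem concerns $\DGMod(\C_{\sbullet}(G))$, i.e.\ \emph{all} modules, while your functor lands in, and at best surjects onto, the ``sufficiently smooth'' ones; you need the further claim that every module is quasi-isomorphic to a sufficiently smooth one, which is precisely where compactness and simple connectedness must be exploited, and your closing ``two-out-of-three on a set of generators'' is not available as stated because no generating set has been exhibited for either category. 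Third, quasi-fully-faithfulness requires identifying a Chevalley--Eilenberg/Weil-type Hom-complex with a bar-type derived Hom of modules; the Eilenberg--Moore comparison you invoke needs an explicit filtration argument with identified $E_2$ pages and a convergence statement, of the kind this paper carries out in Lemmas 4.10 and 4.12 for the analogous comparison over $BG$, and as written it remains an unverified appeal to a spectral sequence.
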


Given that the DG category  $\DGRep(\TT\gfrak)$ is a subcategory of $\InfLoc_{\infty}(\gfrak)$, one concludes that the category of modules over $\C_{\sbullet}(G)$ consists of $\infty$-local systems on $BG$. This should be expected on topological grounds because $G$ is $\A_{\infty}$ equivalent to $\Omega^{\mathrm{M}} BG$, and therefore,  $\C_{\sbullet}(G)$ and $\C_{\sbullet}(\Omega^{\mathrm{M}} BG)$ are homotopy equivalent. However, the argument we provide is infinitesimal, in terms of the Lie algebra $\gfrak$.

\begin{NotationConventions}
All vector spaces and algebras are defined over the field of real numbers $\RR$.
All cochain complexes are cohomologically graded, that is, the differential increases the degree by $1$.
 If $V = \bigoplus_{k \in \ZZ} V^k$ is a graded vector space, we denote by $\s V$ its suspension, which is the graded vector space with grading defined by
$$
(\s V)^{k} = V^{k-1},
$$
and by $\u V$ its unsuspension, which is the graded vector space with grading defined by
$$
(\u V)^{k} = V^{k+1}.
$$

The symmetric group on $n$ letters is denoted by $\mathfrak{S}_{n}$. If $i_1 +\dots+i_d=n$, then $\mathfrak{S}_{i_1, \dots, i_d}$ is the set of $(i_1, \dots,i_d)$-shuffle permutations in  $\mathfrak{S}_{n}$. That is, the set of permutations that preserve the order on each block $\{i_1+\dots+i_l+1, \dots, i_1+ \dots +i_{l+1}\}$.

The symmetric algebra $\uS^{\sbullet} V$ of a graded vector space $V$ is the quotient of the tensor algebra $\uT^{\sbullet} V$ by the graded ideal generated by elements of the form $u \otimes v - (-1)^{\vert u \vert \vert v \vert} v \otimes u$, for homogeneous elements $u,v \in V$. We write $v_1 \odot \dots \odot v_k$ for the element represented by $v_1 \otimes \cdots \otimes v_k$ in the quotient space $\uS^{k} V$. Given a permutation $\sigma \in \mathfrak{S}_n$, we denote by $\varepsilon(\sigma;v_1,\dots,v_1)$ the graded Koszul sign, which is defined via 
$$
v_{\sigma(1)}\odot \cdots \odot v_{\sigma(n)} = \varepsilon(\sigma;v_1,\dots,v_n) v_1\odot \cdots \odot v_n.
$$ 
We denote by $\bigodot^{\sbullet}V$ the cocommutative coalgebra whose underlying vector space is $\uS^{\sbullet}V$ and has coproduct given by
$$
\Delta(v_1\odot \cdots \odot v_n)=\sum_{i+j= n} \sum_{\sigma \in \mathfrak{S}_{i,j}} \varepsilon(\sigma;v_1,\dots,v_n) v_{\sigma(1)}\odot \cdots \odot v_{\sigma(i)} \otimes v_{\sigma(i+1)}\odot \cdots \odot v_{\sigma(n)}.
$$
The coalgebra $\bigodot^{\sbullet}V$ is cocommutative, counital and coaugmented, with coaugmentation $c\colon \RR \to \bigodot^{\sbullet}V$ given by $ 1 \mapsto 1 \in \bigodot^{0}V=\RR$, and counit $\epsilon\colon \bigodot^{\sbullet}V \to \RR$ given by the natural projection. We will refer to the coalgebra $\bigodot^{\sbullet}V$ as the cocommutative coalgebra cogenerated by $V$. Given another graded vector space $W$ and a linear map of degree zero $\varphi\colon \bigodot^{\sbullet} W \to V$ there is unique coaugmented coalgebra map $\bar{\varphi}\colon \bigodot^{\sbullet}W\to \bigodot^{\sbullet} V$ such that the following diagram commutes:
\[ \xymatrix{
\bigodot^{\sbullet}W \ar[r]^{\bar{\varphi}} \ar[rd]_{\varphi}& \bigodot^{\sbullet}V\ar[d]^{\pi}\\
& V.
}
\]
Here $\pi\colon \bigodot^{\sbullet}V \to V$ is the natural projection map. The map $\overline{\varphi}$ is given explicitly by the formula
\begin{align*}
 &\bar{\varphi}(v_1 \odot \dots \odot v_n) \\
 &\qquad=
\sum_{i_1 + \cdots + i_{p} = n} \sum_{\sigma \in \mathfrak{S}_{i_1,\dots,i_p}} \varepsilon(\sigma;x_1,\dots,x_n) \frac{1}{p!} \prod_{k=1}^{p}\varphi (x_{\sigma(i_1 + \cdots + i_{k-1} + 1)}\odot \cdots \odot x_{\sigma(i_1 + \cdots + i_{k})}).
\end{align*}

We write $\Delta_n$ for the standard geometric $n$-simplex 
$$
\Delta_n = \{ \mathbf{t}=(t_1,\dots,t_n) \in \RR^{n} \mid 1 \geq t_1 \geq \cdots \geq t_n \geq 0 \}.
$$
Given a simplicial set $K_{\sbullet}$, its geometric realization is denoted by $\vert K_{\sbullet} \vert$. We write $[x,\mathbf{t}]$ to designate the equivalence class of a point $(x,\mathbf{t}) \in K_p \times \Delta_p$ with respect to the equivalence relation that defines $\vert K_{\sbullet} \vert$. The geometric realization of a simplicial map $f_{\sbullet} \colon K_{\sbullet} \to L_{\sbullet}$ is the map $\vert f_{\sbullet} \vert \colon \vert K_{\sbullet} \vert \to \vert L_{\sbullet} \vert$ given by $[x,\mathbf{t}] \mapsto [f_p(x),\mathbf{t}]$. 

All principal bundles are defined in terms of a free right action $\sigma \colon P \times G \to P$ of a Lie group $G$ on a smooth manifold $P$. For each $g \in G$, we write $\sigma_g \colon P \to P$ for the diffeomorphism given by $p  \mapsto \sigma(p,g) = p \cdot g$.

If $E = \bigoplus_{k \in \ZZ} E^{k}$ is a graded vector bundle over a  smooth manifold, we define
$$
\Omega^{\sbullet}(M,E) = \Gamma(\Lambda^{\sbullet} T^* M \otimes E).
$$
This space is graded by the total degree
$$
\Omega^{\sbullet}(M,E)^{n} = \bigoplus_{p + q = n} \Omega^{p}(M, E^{q}).
$$
Given an element $\omega \in \Omega^{p}(M, E^{q})$ we will say that $\omega$ is of partial degree $p$.
\end{NotationConventions}

\begin{acknowledgements}
We would like to acknowledge the support of Colciencias through  their grant {\it Estructuras lineales en topolog\'ia y geometr\'ia}, with contract number FP44842-013-2018.  We also thank  the Alexander von Humboldt foundation which supported our work through the Humboldt Institutspartnerschaftet { \it Representations of Gerbes and higher holonomies}. 
We are grateful to Manuel Rivera for clarifying conversations about DG categories. 
\end{acknowledgements}


\section{Preliminaries}
In this section we recall some terminology and results concerning DG Lie algebras, DG categories and $\infty$-local systems.The books by Guillemin-Sternberg and Meinrenken \cite{Meinrenken2013,guillemin} discuss the construction of the Weil algebra of a Lie algebra. For a detailed exposition of $\L_\infty$-algebras and morphisms we recommend \cite{reinhold2019algebras}. The paper by Keller \cite{Keller2006} provides an excellent introduction to DG categories. 
Our conventions on $\infty$-local systems and the higher Riemann-Hilbert correspondence are taken from \cite{Block-Smith2014,AriasAbad-Crainic2012,Abad-Schatz2013}.

\subsection{DG Lie algebras and $\L_{\infty}$-morphisms}\label{sec:2.1}
 A \emph{DG Lie algebra} (where DG stands for ``differential graded'') is a graded Lie algebra $L = \bigoplus_{i \in \ZZ} L^{i}$ equipped with a linear map $d \colon L \to L$ of degree $1$ with $d^2 = 0$, such that the Leibniz  rule
$$
d [x,y]= [dx, y] + (-1)^{\vert x \vert} [x, dy],
$$ 
holds for  homogeneous elements $x,y \in L$. A \emph{strict morphism of DG Lie algebras} $\phi \colon L \to L'$ is a homomorphism of graded Lie algebras with $\phi \circ d = d' \circ \phi$.

A basic example of a DG Lie algebra is provided by the linear endomorphisms $\End(V)$ of a cochain complex of vector spaces $V$. The bracket operation is given by the graded commutator and the differential is defined on homogeneous elements $f \in \End(V)$ by  
$$
d(f) = \delta_V \circ f - (-1)^{\vert f \vert} f \circ \delta_V,
$$
where here $\delta_V$ is the differential of $V$. 

Given a commutative DG algebra $A$ and a DG Lie algebra $L$, the tensor product $A \otimes L$ has the structure of a DG Lie algebra. The bracket operation and the differential on $A \otimes L$ are
\begin{align*}
[a \otimes x, b \otimes y] &= (-1)^{\vert x \vert \vert b \vert} ab \otimes [x,y], \\
d (a \otimes x) &=da \otimes x +(-1)^{\vert a \vert}a \otimes dx,
\end{align*}
for homogeneous $a,b \in A$ and $x, y \in L$. 

Let $L$ be a DG Lie algebra and consider the cocommutative coalgebra $\bigodot^{\sbullet}(\u L)$. The differential and the bracket of $L$ can be encoded in a single coderivation $D$ on $\bigodot^{\sbullet}(\u L)$. Explicitly, this coderivation is defined by setting for $x_1,\dots, x_n \in L$, 
\begin{gather*}
D (\u x_1 \odot \cdots \odot \u x_n) = \sum_{i=1}^{n} \varepsilon(\sigma_{i},\u x_1,\dots,\u x_n) \u(dx_i)\odot \u x_1\odot \cdots \odot \u x_{i-1} \odot \u x_{i+1}\odot \cdots  \odot \u x_n  \\
 + \sum_{ 1\leq i < j \leq n } \varepsilon(\sigma_{ij},\u x_1,\dots,\u x_n)(-1)^{\vert v_i \vert}  \u[x_i,x_j]\odot \u x_1 \cdots \odot \u x_{i-1} \odot \u x_{i+1} \odot \cdots  \odot \u x_{j-1}\odot  \u x_{j+1}\odot \cdots  \odot \u x_n,
\end{gather*}
where $\sigma_i \in \mathfrak{S}_n$ is the permutation which sends $1$ to $i$, subtracts one from the integers $2,\dots, i$ and fixes the other integers,  and $\sigma_{ij} \in \mathfrak{S}_n$ is the permutation which sends $1$ to $i$, sends $2$ to $j$, removes two from the integers $2,\dots, i+1$, removes one from the integers $i+2,\dots, j$ and fixes the other integers. This derivation satisfies $D^2 = 0$, so that the coalgebra $\bigodot^{\sbullet}(\u L)$ has the structure of a differential graded coalgebra. Moreover, for any two DG Lie algebras $L$ and $L'$, a linear map $\phi \colon L \to L'$ of degree $0$ is a morphism of DG Lie algebras if and only if $\overline{\u \phi} \circ D = D' \circ \overline{\u \phi}$, where $D$ and $D'$ are the codifferentials on $\bigodot^{\sbullet}(\u L)$ and $\bigodot^{\sbullet}(\u L')$, respectively, and $\overline{\u \phi}:\bigodot^{\sbullet} (\u L) \to \bigodot^{\sbullet}(\u L')$ is the coalgebra map associated to the linear map $\u \phi : \u L \to \u L'$.

We will consider a more general notion of morphism between DG Lie algebras, that of $\L_{\infty}$-morphism. Let $L$ and $L'$ be  DG Lie algebras with corresponding codifferentials  $D$ and $D'$. A  linear map $\Phi \colon \bigodot^{\sbullet}(\u L) \to \u L'$ of degree $0$ is called an \emph{$\L_{\infty}$-morphism} between $L$ and $L'$ if 
$$
\bar{\Phi} \circ D = D' \circ \bar{\Phi}.
$$
Such an $\L_\infty$-morphism can be written as the sum
$$
\Phi=\Phi_1 + \Phi_2+\Phi_3+ \dots,
$$
where $\Phi_k$ is the restriction of $\Phi$ to the vector space $\textstyle{\bigodot}^k(\u L)$.
Strict morphisms of DG Lie algebras are a particular instance of $\L_{\infty}$-morphisms that correspond the case where
$\Phi_k=0$ for $k>1$ . 

Let $L$ be a DG Lie algebra. An element $x \in L$ of degree $1$ is said to be a \emph{Maurer-Cartan element} if
$$
dx + \frac{1}{2}[x,x] = 0.
$$
This equation, which describes an abstract form of ``flatness'', is known as the \emph{Maurer-Cartan equation}. Given a differential graded Lie algebra $L$, the dual vector space of the differential graded coalgebra $\bigodot^{\sbullet}(\u L)$ is a differential graded Lie algebra known as the \emph{Chevalley-Eilenberg DG algebra} of L. We shall denote it by $\CE(L)$ 
and write $\dCE = D^*$ for the corresponding differential. In the special case where $L$ is a Lie algebra, the definition reduces to that of the usual Chevalley-Eilenberg complex which computes the Lie algebra cohomology of $L$.

The following result, which a consequence of Proposition~3.3 of \cite{Metha-Zambon2012}, will be used throughout the text.

\begin{proposition}\label{prop:2.1}
Let $L$ and $L'$ be  DG Lie algebras, and suppose that $L$ is finite dimensional. Then, there is a natural identification between the set of $L_{\infty}$-morphisms from $L$ to $L'$ and the set of Maurer-Cartan elements of $\CE(L) \otimes L'$.  
\end{proposition}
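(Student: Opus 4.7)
The plan is to make the stated identification explicit and then to verify, term by term, that the coalgebra equation defining an $\L_\infty$-morphism matches the Maurer-Cartan equation in the DG Lie algebra $\CE(L) \otimes L'$. First, I would set up the underlying vector-space identification. Since $L$ is finite dimensional, for every $k \geq 1$ there is a natural isomorphism
$$
\Hom^0\bigl({\textstyle\bigodot}^{k}(\u L), \u L'\bigr) \;\cong\; \bigl(\CE^k(L) \otimes L'\bigr)^1,
$$
where $\CE^k(L) = \uS^k(\s L^*)$ and the Koszul signs account for the two suspensions, one on the source and one on the target. Concretely, choose a homogeneous basis $\{e_i\}$ of $L$ and let $\{\xi^i\} \subset \s L^*$ be the corresponding dual basis; then a degree-zero map $\Phi_k$ is sent to
$$
\alpha_k \;=\; \sum \pm \, \xi^{i_1} \odot \cdots \odot \xi^{i_k} \otimes x_{i_1,\dots,i_k},
$$
where $x_{i_1,\dots,i_k} \in L'$ is the unsuspension of $\Phi_k(\u e_{i_1} \odot \cdots \odot \u e_{i_k})$ and the sign is fixed by the Koszul rule. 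Summing over $k \geq 1$, and setting $\Phi_0 = 0$ as is standard for $\L_\infty$-morphisms of DG Lie algebras, yields an element $\alpha \in \CE(L) \otimes L'$ of total degree $1$, and this assignment is bijective at the level of graded vector spaces.

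Second, I would unpack the coalgebra compatibility $\bar\Phi \circ D = D' \circ \bar\Phi$ by composing both sides with the natural projection $\pi' \colon \bigodot^{\sbullet}(\u L') \to \u L'$, since a coaugmented coalgebra map into $\bigodot^{\sbullet}(\u L')$ is determined by its composition with $\pi'$. The codifferential $D$ decomposes as $D_1 + D_2$, where $D_1$ encodes $d_L$ and $D_2$ the bracket of $L$, and similarly $D' = D'_1 + D'_2$. The term $\pi' \circ \bar\Phi \circ D$ contains only contributions of the form $\Phi \circ D_1$ and $\Phi \circ D_2$, which dualize on the $L$-factor precisely to $\dCE\, \alpha$, since $\dCE = D^*$ on cogenerators by definition of the Chevalley-Eilenberg DG algebra. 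On the other hand, using the explicit shuffle formula for $\bar\Phi$ recalled in the Notation section, $\pi' \circ D' \circ \bar\Phi$ produces two kinds of terms: a linear-in-$\Phi$ contribution from $D'_1$, which corresponds to $(1 \otimes d_{L'})\alpha$, and a quadratic-in-$\Phi$ contribution from $D'_2$ that merges two copies of $\Phi$ through the bracket of $L'$, which corresponds to $\tfrac{1}{2}[\alpha,\alpha]$ for the bracket on $\CE(L) \otimes L'$ obtained by combining the commutative product of $\CE(L)$ with the Lie bracket of $L'$. Putting the pieces together, the equation $\bar\Phi \circ D = D'\circ \bar\Phi$ is equivalent to
$$
(\dCE \otimes 1)\alpha + (1 \otimes d_{L'})\alpha + \tfrac{1}{2}[\alpha,\alpha] \;=\; 0,
$$
which is the Maurer-Cartan equation for $\alpha$ in $\CE(L) \otimes L'$.

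The main obstacle I anticipate is not conceptual but combinatorial: keeping track of the Koszul signs produced by the suspensions $\u$ and $\s$ and by dualizing $L$, and checking that the shuffle coefficients in the coproduct together with the $1/p!$ factors appearing in the formula for $\bar\Phi$ line up exactly with the $\tfrac{1}{2}$ in front of $[\alpha,\alpha]$. A clean way to avoid ad hoc bookkeeping is to first dispose of the case in which $L'$ is also finite dimensional by globally dualizing, so that an $\L_\infty$-morphism becomes a CDGA morphism $\CE(L') \to \CE(L)$, and then to recognize the universal Maurer-Cartan element in $\CE(L) \otimes L'$ as precisely the element classifying such an algebra map; the general case then follows by reducing to the finite-dimensional DG Lie subalgebras of $L'$ that contain the image of $\Phi$. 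In practice, since Mehta--Zambon carry out exactly this verification in their Proposition~3.3, one would ultimately cite their result rather than re-do the sign gymnastics from scratch.
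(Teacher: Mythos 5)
Your proposal is correct and follows essentially the same route as the paper: identify $[\CE(L)\otimes L']^1$ with $\Hom^0(\bigodot^{\sbullet}(\u L),\u L')$ using the finite dimensionality of $L$, observe that under this identification the Maurer--Cartan equation corresponds to the condition $\bar\Phi\circ D=D'\circ\bar\Phi$, and ultimately defer the sign bookkeeping to Proposition~3.3 of Mehta--Zambon. The paper is in fact terser than you are --- it only records the chain of vector-space isomorphisms and cites that reference for the matching of the two conditions.
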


The identification goes through the following sequence of isomorphisms of vector spaces:
\begin{align*}
\left[\CE(L) \otimes L'\right]^1 &\cong \left[\left(\bigoplus_{k \geq 0}\textstyle{\bigodot^{k}} (\u L)\right)^*\otimes L' \right]^1 \\
&\cong \prod_k \left[\left(\textstyle{\bigodot^{k}}(\u L) \right)^*\otimes L' \right]^1\\
&\cong\Hom^1\left( \textstyle{\bigodot^{\sbullet} }(\u L), L'\right) \\
&\cong \Hom^0\left( \textstyle{\bigodot^{\sbullet} }(\u L), \u L'\right).
\end{align*}
Maurer-Cartan elements live on the space on the first vector space, $\L_\infty$-morphisms live on the last vector space, and
the corresponding conditions map to one another.

\subsection{DG categories, DG functors and $\A_{\infty}$-functors}\label{sec:2.2}
A \emph{DG category} is a linear category $\Ccal$ such that for every two objects $A$ and $B$ the space of arrows $\Hom_{\Ccal}(A,B)$ is equipped with a structure of a cochain complex of vector spaces, and for every three objects $A$, $B$ and $C$ the composition map $\Hom_{\Ccal}(B,C) \otimes \Hom_{\Ccal}(A,B) \to \Hom_{\Ccal}(A,C)$ 
is a morphism of cochain complexes. Thus, by definition, 
\[
\Hom_{\Ccal}(A,B) = \bigoplus_{n \in \ZZ} \Hom_{\Ccal}^{n}(A,B)
\]
is a graded vector space with a differential $d \colon \Hom_{\Ccal}^{n}(A,B) \to \Hom_{\Ccal}^{n+1}(A,B)$. The elements $f \in \Hom_{\Ccal}^{n}(A,B)$ are called \emph{homogeneous of degree $n$}, and we write $\vert f \vert = n$. We denote the set of objects of $\Ccal$ by $\Ob \Ccal$.

The fundamental example of a DG category is the category of cochain complexes of vector spaces, which we denote by $\DGVect$. Its objects are cochain complexes of vector spaces and the morphism spaces $\Hom_{\DGVect}(V,W)$ are endowed with the differential defined as
$$
d (f) = \delta_{W} \circ f - (-1)^n f \circ \delta_{V},
$$
for any homogeneous element $f$ of degree $n$.   

Let $\Ccal$ be a DG category and let $A \in \Ob \Ccal$. Given a closed morphism $f \in \Hom_{\Ccal}^{0}(B,C)$ we define $f_* \colon \Hom_{\Ccal}(A,B) \to \Hom_{\Ccal}(A,C)$ by $f_{*}(g) = f \circ g$ for $g \in \Hom_{\Ccal}(A,B)$. It is not difficult to see that $f_*$ is a morphism of cochain complexes. Similarly, if we define $f^* \colon \Hom_{\Ccal}(C,A) \to \Hom_{\Ccal}(B,A)$ by $f^*(h) = h \circ f$ for $h \in \Hom_{\Ccal}(C,A)$, then $f^*$ es a morphism of cochain complexes. 

Given a DG category $\Ccal$ one defines an ordinary category $\mathbf{Ho}(\Ccal)$ by keeping the same set of objects and replacing each $\Hom$ complex by its $0$th cohomology. We call $\mathbf{Ho}(\Ccal)$ the \emph{homotopy category} of $\Ccal$. 

If $\Ccal$ and $\Dcal$ are DG categories, a \emph{DG functor} $F \colon \Ccal \to \Dcal$ is a linear functor whose associated map for $A, B \in \Ob \Ccal$,
$$
F_{A,B} \colon \Hom_{\Ccal}(A,B) \to \Hom_{\Dcal}(F(A),F(B)),
$$ 
is a morphism of cochain complexes. Notice that any DG functor $F \colon \Ccal \to \Dcal$ induces an ordinary functor 
$$
\mathbf{Ho}(F) \colon \mathbf{Ho}(\Ccal) \to \mathbf{Ho}(\Dcal)
$$
between the corresponding homotopy categories. A DG functor $F \colon \Ccal \to \Dcal$ is said to be \emph{quasi fully faithful} if for every pair of objects $A, B \in \Ob \Ccal$ the morphism $F_{A,B}$ is a quasi-isomorphism. Moreover, the DG functor $F $ is said to be \emph{quasi essentially surjective} if $\mathbf{Ho}(F)$ is essentially surjective. A DG functor which is both quasi fully faithful and quasi essentially surjective is called a \emph{quasi-equivalence}. 

A morphism $f \in \Hom^0_{\Ccal}(B,C)$ is said to be a \emph{quasi-isomorphism} if it is closed and its equivalence class in $\mathbf{Ho}(\Ccal)$ is an isomorphism. The following lemma is an immediate consequence of the definition.

\begin{lemma}
Let  $f \in \Hom^0_{\Ccal}(B,C)$  be a quasi-isomorphism. Then, for any object $A \in \Ob \Ccal$, both $f_* \colon \Hom_{\Ccal}(A,B) \to \Hom_{\Ccal}(A,C)$ and $f^* \colon \Hom_{\Ccal}(C,A) \to \Hom_{\Ccal}(B,A)$ are quasi-isomorphisms.
\end{lemma}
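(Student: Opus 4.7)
The plan is to promote the homotopy inverse of $f$ in the homotopy category to a chain-level homotopy inverse of $f_*$ (and of $f^*$). Since $[f]$ is an isomorphism in $\mathbf{Ho}(\Ccal)$, there exists a closed morphism $g \in \Hom^0_\Ccal(C,B)$ together with morphisms $h_1 \in \Hom^{-1}_\Ccal(B,B)$ and $h_2 \in \Hom^{-1}_\Ccal(C,C)$ satisfying
\[
g \circ f - \mathrm{id}_B = d(h_1), \qquad f \circ g - \mathrm{id}_C = d(h_2).
\]
Applying the lower-star construction to these identities, and using that $f, g$ are closed so $f_* \circ g_* = (f \circ g)_*$ and $g_* \circ f_* = (g \circ f)_*$, we reduce the claim to the assertion that $(h_1)_* \colon \Hom_\Ccal(A,B) \to \Hom_\Ccal(A,B)$ is a chain homotopy between $g_* \circ f_*$ and the identity, and similarly for $(h_2)_*$.

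The only computation to check is that push-forward by a morphism $h$ of degree $n$ is compatible with the differentials in the sense that $d(h_*) = (d h)_*$ as a map of graded vector spaces of degree $n$. This is a direct consequence of the Leibniz rule in $\Ccal$ and the definition of the differential on graded $\Hom$-complexes: for $\phi \in \Hom_\Ccal(A,B)$ of degree $m$ one has
\[
d(h \circ \phi) = d(h) \circ \phi + (-1)^{n} h \circ d(\phi),
\]
which, matched against the formula $d(h_*)(\phi) = d(h_*(\phi)) - (-1)^{n}h_*(d\phi)$, yields $d(h_*) = (dh)_*$. Applying this with $h = h_1$ gives $g_* \circ f_* - \mathrm{id} = d((h_1)_*)$, so $f_*$ admits $g_*$ as a two-sided homotopy inverse and is therefore a quasi-isomorphism.

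The proof that $f^*$ is a quasi-isomorphism is entirely analogous, using right-composition by $g$, $h_1$, $h_2$ instead of left-composition. One checks in the same way that $(-)^* $ also commutes with the differential up to sign, so that $(h_1)^*$ and $(h_2)^*$ provide the required chain homotopies. There is no conceptual obstacle; the only thing that requires care is keeping track of the Koszul signs in the Leibniz rule, which is precisely what ensures that $d(h_*) = (dh)_*$ and $d(h^*) = (dh)^*$.
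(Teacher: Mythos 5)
Your proof is correct and takes essentially the same route as the paper's: both pass to a representative $g$ of the inverse of $[f]$ in $\mathbf{Ho}(\Ccal)$, use functoriality of $(-)_*$ and $(-)^*$ on the relations $g\circ f=\id_B+d(h_1)$ and $f\circ g=\id_C+d(h_2)$, and conclude on cohomology. The only difference is that you make explicit the identity $d(h_*)=(dh)_*$ that the paper leaves implicit when asserting that $(\id+dx)_*$ induces the identity on cohomology.
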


\begin{proof}
Let us write $[f]$ for the equivalence class of $f$ in $\mathbf{Ho}(\Ccal)$. Then, by assumption, $[f]$ has an inverse $[g]\in \Hom_{\mathbf{Ho}(\Ccal)}(C,B).$ Let $g\in \Hom^{0}_{\Ccal}(C,B)$ be a representative of $[g]$. Then, there exists $x \in  \Hom^{-1}_{ \Ccal}(B,B)$ and $y \in \Hom^{-1}_{ \Ccal}(C,C)$ such that
\begin{align*}
 g \circ f &=\id_A + dx,\\
 f \circ g &= \id_B + dy.
\end{align*}
This implies that
\begin{align*}
g_* \circ f_* &=(g \circ f)_*=(\id_A +dx)_*, \\
f_* \circ g_* &=(f \circ g)_*=(\id_B +dy)_*.
\end{align*}
Since $(\id_A+dx)_*$ and $(\id_B+dy)_*$ induce the identity on cohomology, one concludes that $f_*$ is a quasi-isomorphism. Similarly,
\begin{align*}
f^* \circ g^* &=(g \circ f)^*=(\id_A +dx)^*, \\
g^* \circ f^* &=(f \circ g)^*=(\id_B +dy)^*,
\end{align*}
and since $(\id_A+dx)^*$ and $(\id_B+dy)^*$ induce the identity on cohomology, one gets that $f^*$ is a quasi-isomorphism.
\end{proof}

There is a more general notion of functor between DG categories, that of $\A_{\infty}$-functor, where the composition is preserved only up to an infinite sequence of coherence conditions. It will be useful to introduce first the Hochschild chain complex of a DG category. 

Let $\Ccal$ be a small DG category. The \emph{Hochschild cochain complex} of $\Ccal$  denoted $\mathrm{HC}(\Ccal)$ is the cochain complex defined as follows. As a vector space
$$
\mathrm{HC}(\Ccal)=\bigoplus_{A_0,\dots, A_n}\u \Hom_{\Ccal}(A_{n-1},A_n) \otimes \cdots \otimes \u \Hom_{\Ccal}(A_{0},A_1),
$$
where $A_0,\dots,A_n$ range through the objects of $\Ccal$. The differential $b$ is the sum of two components $b_1$ and $b_2$, which are given by the formulas
$$
b_1 (f_{n-1} \otimes \cdots \otimes f_0) = \sum_{i=0}^{n-1} (-1)^{\sum_{j=i+1}^{n-1}\vert f_{j} \vert  + n - i - 1}  f_{n-1} \otimes \cdots \otimes d f_i \otimes \cdots\otimes f_0
$$
and 
$$
b_2 (f_{n-1} \otimes \cdots \otimes f_0) = \sum_{i=0}^{n-2} (-1)^{\sum_{j=i+2}^{n-1}\vert f_{j} \vert + n - i } f_{n-1} \otimes \cdots \otimes (f_{i+1} \circ f_i) \otimes \cdots\otimes f_0
$$
for homogeneous elements $f_0 \in  \u\Hom_{\Ccal}(A_{0},A_1), \dots, f_{n-1} \in \u\Hom_{\Ccal}(A_{n-1},A_n)$.  Here $d$ denotes indistinctly the differential in any of the spaces $\Hom_{\Ccal} (A_{i},A_{i+1})$. It is easy to check that indeed $b^2=0$. 

Let $\Ccal$ and $\Dcal$ be DG categories. An \emph{$\A_{\infty}$-functor} $F \colon \Ccal \to \Dcal$ is the datum of a map of sets $F_0 \colon \Ob \Ccal \to \Ob \Dcal$ and a collection of $K$-linear maps of degree $0$
$$
F_n \colon \u \Hom_{\Ccal}(A_{n-1},A_n) \otimes \cdots \otimes \u \Hom_{\Ccal}(A_{0},A_1) \to \Hom_{\Dcal}(F_0(A_0),F_0(A_n))
$$
for every collection $A_0,\dots,A_n \in \Ob \Ccal$, such that the relation
\begin{align*}
b_1 \circ  F_n  + \sum_{i+j = n} b_2  \circ (F_i \otimes F_j ) = \sum_{i + j + 1= n} F_n \circ (\id^{\otimes i} \otimes b_1 \otimes \id^{\otimes j}) + \sum_{i + j + 2 = n} F_{n-1} \circ (\id^{\otimes i} \otimes b_2 \otimes \id^{\otimes j}) 
\end{align*} 
is satisfied for any $n \geq 1$. We also require that $F_1(\id_{A}) = \id_{F_0(A)}$ for all objects $A$ in $\Ccal$, as well as $F_n(f_{n-2} \otimes \cdots \otimes f_{i} \otimes \id_{A_{i}} \otimes f_{i-1} \otimes \cdots \otimes f_0 ) =0$ for any $n \geq 1$, any $0 \leq i \leq n-2$, and any chain of morphisms $f_0 \in  \u \Hom_{\Ccal}(A_{0},A_1), \dots, f_{n-2} \in \u  \Hom_{\Ccal}(A_{n-2},A_{n-1})$. 

The above relation when $n=1$ implies that $F_1$ is a morphism of cochain complexes. On the other hand, for $n=2$ we find that $F_1$ preserves the compositions on $\Ccal$ and $\Dcal$, up to a homotopy defined by $F_2$. In particular, a DG functor between $\Ccal$ and $\Dcal$ is identified with and $\A_{\infty}$-functor having $F_n = 0$ for $n \geq 2$. It also follows that $F_1$ induces an ordinary functor 
$$
\mathbf{Ho}(F_1) \colon  \mathbf{Ho}(\Ccal)  \to \mathbf{Ho}(\Dcal). 
$$
An $\A_{\infty}$-functor $F \colon \Ccal \to \Dcal$ is called $\A_{\infty}$-\emph{quasi fully faithful} if $F_1$ is a quasi-isomorphism of each pair of objects, $F$ is called $\A_{\infty}$-\emph{quasi essentially surjective} if $\mathbf{Ho}(F_1)$ is essentially surjective. Moreover, the $\A_{\infty}$-functor $F$ is called a $\A_{\infty}$-\emph{quasi-equivalence} if it is both quasi fully faithful and quasi essentially surjective. 

We need one more notion. Let $\Ccal$ and $\Dcal$ be DG categories and let $F \colon \Ccal \to \Dcal$ and $G \colon \Ccal \to \Dcal$ be DG functors. An \emph{$\A_{\infty}$-natural transformation}  $\lambda \colon F \Rightarrow G$ is the datum of a closed morphism $\lambda_0(X) \in \Hom_{\Dcal}^0 (F(A),G(A))$ for each $A \in \Ob \Ccal$ and a collection of $K$-linear maps of degree $0$
$$
\lambda_n \colon \u \Hom_{\Ccal}(A_{n-1},A_n) \otimes \cdots \otimes \u \Hom_{\Ccal}(A_{0},A_1) \to \Hom_{\Dcal}(F(A_0),G(A_n))
$$
for every collection $A_0,\dots,A_n \in \Ob \Ccal$, such that for all composable chains of homogeneous morphisms $f_0 \in  \u \Hom_{\Ccal}(A_{0},A_1), \dots, f_{n-1} \in \u \Hom_{\Ccal}(A_{n-1},A_n)$ the relation
\begin{align*}
G(f_{n-1}) \circ \lambda_{n-1}(f_{n-2}\otimes \cdots \otimes f_0) &- (-1)^{\sum_{i=1}^{n-1}\vert f_i \vert -n+1} \lambda_{n-1}(f_{n-1} \otimes\cdots \otimes f_1) \circ F(f_0) \\
&\qquad\qquad  = \lambda \left( b(f_{n-1} \otimes \cdots \otimes f_0) \right) - d \left(\lambda_n (f_{n-1} \otimes \cdots \otimes f_0) \right)
\end{align*}
is satisfied for any $n \geq 1$. The $\lambda$ on the right denotes the direct sum of the various $\lambda_n$. For $n=1$ this yields the condition
$$
G(f_0) \circ \lambda_0(A_0) - \lambda_0(A_1) \circ F(f_0) = \lambda_1 \left(d(f_0) \right) - d \left(\lambda_1(f_0)\right).
$$
Since the map $\lambda_1 \colon \u\Hom_{\Ccal}(A_{0},A_1) \to \Hom_{\Dcal}(F(A_{0}),G(A_1))$ has degree $-1$ when considered as a map defined over $\Hom_{\Ccal}(A_{0},A_1)$, this implies that the diagram
$$
\xymatrix{ F(A_0) \ar[r]^-{\lambda_0(A_0)} \ar[d]_-{F(f_0)} & G(A_0) \ar[d]^-{G(f_0)} \\ F(A_1) \ar[r]_-{\lambda_0(A_1)} & G(A_1)}
$$
commutes up to a homotopy given by $\lambda_1$. 

As usual, $\A_{\infty}$-natural transformations can be composed: if $F \colon \Ccal \to \Dcal$, $G \colon \Ccal \to \Dcal$ and $H \colon \Ccal \to \Dcal$ are three DG functors from the DG category $\Ccal$ to the DG category $\Dcal$, and $\lambda \colon F \Rightarrow G$ and $\mu \colon G \Rightarrow H$ are two $\A_{\infty}$-natural transformations, then the formula
$$
(\mu \circ \lambda)_n = \sum_{i=0}^n \mu_i \circ \lambda_{n-i}
$$
defines a new $\A_{\infty}$-natural transformation $\mu \circ \lambda \colon F \Rightarrow H$.  An \emph{$\A_{\infty}$-natural isomorphism} between functors from $\Ccal$ to $\Dcal$ is an $\A_\infty$-natural transformation $\lambda$
such that $\lambda_0(A)$ is an isomorphism for all $A \in \Ob \Ccal$.

We close this subsection with the following observation.

\begin{lemma}\label{lem:2.2}
Let $\Ccal$, $\Dcal$ and $\Ecal$ be DG categories and let $F\colon \Ccal \to \Dcal$, $G \colon \Ccal \to \Dcal$ and $H \colon \Dcal \to \Ecal$ be DG functors. Then for each $\A_{\infty}$-natural transformation $\lambda \colon F \Rightarrow G$ there is an induced $\A_{\infty}$-natural transformation $H \circ \lambda \colon H \circ F \Rightarrow H \circ G$. Moreover, if $\lambda$ is an $\A_{\infty}$-natural isomorphism, then $H \circ \lambda$ is an $\A_{\infty}$-natural isomorphism.
\end{lemma}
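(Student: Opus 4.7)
The plan is to define $H\circ\lambda$ componentwise by applying $H$ to each $\lambda_n$, and then verify that the defining equations of an $\A_\infty$-natural transformation transport from $\lambda$ to $H\circ\lambda$ by direct application of the DG functor properties of $H$.

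More precisely, I would set
$$
(H \circ \lambda)_0(A) = H\bigl(\lambda_0(A)\bigr) \in \Hom_{\Ecal}^{0}\bigl(H(F(A)),H(G(A))\bigr),
$$
and for each $n \geq 1$ and each chain $A_0,\dots,A_n$ of objects of $\Ccal$, define
$$
(H\circ \lambda)_n = H_{F(A_0),G(A_n)} \circ \lambda_n \colon \u\Hom_{\Ccal}(A_{n-1},A_n) \otimes \cdots \otimes \u\Hom_{\Ccal}(A_{0},A_1) \to \Hom_{\Ecal}\bigl(H(F(A_0)),H(G(A_n))\bigr).
$$
Each $(H\circ\lambda)_n$ is a $\RR$-linear map of degree zero because $\lambda_n$ is of degree zero and $H$ preserves degrees. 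The closedness of $(H\circ\lambda)_0(A)$ follows from the closedness of $\lambda_0(A)$ combined with the fact that the action of $H$ on $\Hom$-complexes is a morphism of cochain complexes, so it sends closed elements to closed elements. The vanishing of $(H\circ\lambda)_n$ on chains containing an identity is inherited from the analogous property for $\lambda_n$, since $H$ is linear.

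Next I would check the $\A_\infty$-natural transformation equation. Starting from the identity satisfied by $\lambda$,
$$
G(f_{n-1}) \circ \lambda_{n-1}(\cdots) - (-1)^{\sum_{i=1}^{n-1}|f_i|-n+1}\lambda_{n-1}(\cdots)\circ F(f_0) = \lambda\bigl(b(f_{n-1}\otimes\cdots\otimes f_0)\bigr) - d\bigl(\lambda_n(f_{n-1}\otimes\cdots\otimes f_0)\bigr),
$$
I apply $H$ to both sides. Since $H$ is a linear functor it commutes with composition, so $H(G(f_i)\circ\lambda_{n-1}(\cdots)) = (H\circ G)(f_i)\circ H(\lambda_{n-1}(\cdots))$ and similarly on the right; since the action of $H$ on each $\Hom$-complex is a chain map, $H(d(\lambda_n(\cdots))) = d(H(\lambda_n(\cdots)))$ and $H$ commutes with $\lambda\circ b$ applied termwise. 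Replacing $H\circ\lambda_k$ by $(H\circ\lambda)_k$ and $H\circ F$, $H\circ G$ by the composite DG functors then yields precisely the $\A_\infty$-natural transformation equation for $H\circ\lambda$ with respect to $H\circ F$ and $H\circ G$.

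For the second statement, if $\lambda$ is an $\A_\infty$-natural isomorphism, then each $\lambda_0(A) \in \Hom_{\Dcal}^{0}(F(A),G(A))$ is a closed isomorphism, i.e.\ admits a closed two-sided inverse $\mu_0(A) \in \Hom_{\Dcal}^{0}(G(A),F(A))$ with $\lambda_0(A)\circ\mu_0(A) = \id_{G(A)}$ and $\mu_0(A)\circ\lambda_0(A) = \id_{F(A)}$. Applying $H$ and using $H(\id) = \id$ and multiplicativity, $H(\lambda_0(A)) = (H\circ\lambda)_0(A)$ has $H(\mu_0(A))$ as a closed two-sided inverse, so $(H\circ\lambda)_0(A)$ is an isomorphism for every $A$.

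I do not expect any genuine obstacle here: the entire argument is a bookkeeping verification, and every step is immediate from the axioms of a DG functor (linearity, degree preservation, compatibility with composition and differential, and preservation of identities). The only thing requiring attention is keeping the signs and indices aligned when transporting the defining relation through $H$, but since $H$ neither changes degrees nor reorders arguments, the Koszul signs in the equation for $\lambda$ match those in the equation for $H\circ\lambda$ without modification.
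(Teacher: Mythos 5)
Your proof is correct for the statement as literally given, but it takes a genuinely simpler route than the paper, and the difference matters for how the lemma is used later. You define $(H\circ\lambda)_n = H\circ\lambda_n$ and transport the defining relation of $\lambda$ through $H$ using strictness of the DG functor (preservation of composition, differentials, degrees and identities); this is a valid and complete argument when $H$ has no higher components, and your sign remark is right since the Koszul sign depends only on the degrees of the $f_i$, which $H$ does not change. The paper instead defines
$$
(H \circ \lambda)_n = \sum_{k = 2}^{n+2} \sum_{i+j = k} H_{k-1} \circ \bigl(G^{\otimes(i-1)} \otimes \lambda_{n+2-k} \otimes F^{\otimes (j-1)}\bigr),
$$
a sum over all ways of inserting one component of $\lambda$ among copies of $F$ and $G$ fed into the higher components $H_{k-1}$ of $H$, and verifies the relation using the full $\A_{\infty}$-functor identity for $H$. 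For a strict DG functor only the $k=2$ term survives and the two definitions agree, so in that case your argument and the paper's coincide in content; but the paper's computation actually establishes the lemma for $H$ an arbitrary $\A_{\infty}$-functor, and that extra generality is precisely what is invoked at the end of the proof of Theorem~B, where the lemma is applied with $H = \Ical$, the integration functor of the higher Riemann--Hilbert correspondence, which is only an $\A_{\infty}$-functor. So your proof settles the stated lemma but would not support that later application; to substitute for the paper's argument you would need to redo the verification with the higher $H_k$ present, which is where all of the paper's bookkeeping lives.
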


\begin{proof}
The formula for $H \circ \lambda$ reads
$$
(H \circ \lambda)_n = \sum_{k = 2}^{n+2} \sum_{i+j = k} H_{k-1} \circ (G^{\otimes(i-1)} \otimes \lambda_{n+2-k} \otimes F^{\otimes (j-1)}),
$$
for any $n \geq 1$. Let us check that this indeed defines an $\A_{\infty}$-natural transformation between $H \circ F$ and  $H \circ G$. For this purpose, let $m$ indistinctly denote the composition operation in $\Ccal$, $\Dcal$ or $\Ecal$. In this notation, what we need to show is that
$$
m \circ ((H \circ G) \otimes (H \circ \lambda)_{n-1}) - (-1)^n m \circ ( (H\circ \lambda)_{n-1} \otimes (H\circ F)) = (H\circ \lambda) \circ b - d \circ (H \circ \lambda)_n,
$$
wherein
 $$
b = \sum_{i=1}^{n-1} (-1)^{n-i-1} \id^{\otimes (n-i-1)} \otimes d \otimes \id^{\otimes i}  + \sum_{i=0}^{n-2} (-1)^{n-i} \id^{\otimes (n-i-2)} \otimes m \otimes \id^{\otimes i}.
$$
Let us start with the right-hand side. Using the definition of $H \circ \lambda$ and the above expression for $b$, this becomes 
\begin{align*}
&\sum_{l=0}^{n-1} (-1)^{n-l-1} \sum_{k=2}^{n+2} \sum_{i+j=k} H_{k-1}  \circ (G^{\otimes(i-1)} \otimes \lambda_{n+2-k} \otimes F^{\otimes (j-1)}) \circ (\id^{\otimes (n-l-1)} \otimes d \otimes \id^{\otimes l}) \\
&\quad \quad + \sum_{l=0}^{n-2} (-1)^{n-l} \sum_{k=2}^{n+1} \sum_{i+j=k} H_{k-1}  \circ (G^{\otimes(i-1)} \otimes \lambda_{n+1-k} \otimes F^{\otimes (j-1)}) \circ (\id^{\otimes (n-l-2)} \otimes m \otimes \id^{\otimes l}) \\
&\quad \quad - d \circ \left[ \sum_{k=2}^{n+2} \sum_{i+j=k} H_{k-1} \circ (G^{\otimes (i-1)} \otimes \lambda_{n+2-k} \otimes F^{\otimes (j-1)})\right] \\
&\quad = \sum_{k=2}^{n+2} \sum_{i+j=k} \left\{\sum_{l=0}^{n-1} (-1)^{n-l-1}  H_{k-1} \circ (G^{\otimes (i-1)} \otimes \lambda_{n+2-k} \otimes F^{\otimes (j-1)}) \circ (\id^{\otimes (n-l-1)} \otimes d \otimes \id^{\otimes l}) \right. \\
&\quad \quad\quad \phantom{\sum_{k=2}^{n+2}\sum_{i+j=k}} + \sum_{l=0}^{n-2}(-1)^{n-l}  H_{k-1} \circ (G^{\otimes (i-1)} \otimes \lambda_{n+1-k} \otimes F^{\otimes (j-1)}) \circ (\id^{\otimes (n-l-2)} \otimes m \otimes \id^{\otimes l}) \\
&\quad \quad\quad \left.\phantom{\sum_{k=2}^{n+2}\sum_{i+j=k}} - d \circ  H_{k-1} \circ (G^{\otimes (i-1)} \otimes \lambda_{n+2-k} \otimes F^{\otimes (j-1)}) \right\}.
\end{align*}
If in the right-hand side of the last equality we separate out the term $k = 2$ and, in the first sum over $l$, the terms in which $l$ varies over the set $\{i-1,\dots,n-j\}$, we get
\begin{align*}
 &\sum_{l=0}^{n-1}(-1)^{n-l-1} H_1 \circ \lambda_n \circ (\id^{\otimes (n-l-1)} \otimes d \otimes \id^{\otimes l}) \\
 &+ \sum_{l=0}^{n-2} H_1 \circ  \lambda_{n-1} \circ (\id^{\otimes (n-l-1)} \otimes m \otimes \id^{\otimes l}) + d \circ H_1  \circ  \lambda_n \\
 &+ \sum_{k=3}^{n+2} \sum_{i+j=k} \Bigg\{\sum_{\substack{ l=0 \\ l \not\in\{i-1,\dots,n-j\}}}^{n-2} (-1)^{n-l-1}  H_{k-1} \circ (G^{\otimes (i-1)} \otimes \lambda_{n+2-k} \otimes F^{\otimes (j-1)}) \circ (\id^{\otimes (n-l-1)} \otimes d \otimes \id^{\otimes l})  \\
 & \quad\quad \phantom{\sum_{k=2}^{n+2}\sum_{i+j=k}} + \sum_{ l=i-1}^{n-j} (-1)^{n-l-1}  H_{k-1} \circ \left[G^{\otimes (i-1)} \otimes (\lambda_{n+2-k} \circ (\id^{\otimes (n-l-i)} \otimes d \otimes \id^{\otimes l - j +1})) \otimes  F^{\otimes (j-1)}\right]\\
&\quad\quad \phantom{\sum_{k=2}^{n+2}\sum_{i+j=k}} + \sum_{l=0}^{n-2}(-1)^{n-l}  H_{k-1} \circ (G^{\otimes (i-1)} \otimes \lambda_{n+1-k} \otimes F^{\otimes (j-1)}) \circ (\id^{\otimes (n-l-2)} \otimes m \otimes \id^{\otimes l}) \\
&\quad\quad \left.\phantom{\sum_{k=2}^{n+2}\sum_{i+j=k}} - d \circ  H_{k-1} \circ (G^{\otimes (i-1)} \otimes \lambda_{n+2-k} \otimes F^{\otimes (j-1)}) \right\}.
\end{align*}
On the other hand, keeping in mind the previous writing, the condition that $\lambda \colon F \Rightarrow G$ be an $\A_{\infty}$-natural transformation yields
\begin{align*}
&\lambda_n \circ \left( \sum_{i=1}^{n-1} (-1)^{n-i-1} \id^{\otimes (n-i-1)} \otimes d \otimes \id^{\otimes i} \right) \\
& = m \circ (G \otimes \lambda_{n-1}) - (-1)^n m \circ (\lambda_{n-1} \otimes F) + \lambda_{n-1} \circ \left( \sum_{i=0}^{n-2} (-1)^{n-i} \id^{\otimes (n-i-2)} \otimes m \otimes \id^{\otimes i}\right) - d \circ \lambda_n.
\end{align*}
Using this in the terms of the second sum within the curly bracket gives, 
\begin{align*}
&H_1 \circ m \circ (G \otimes \lambda_{n-1}) - (-1)^{n} H_1 \circ m \circ (\lambda_{n-1} \otimes F) \\
&+ \sum_{k=3}^{n+2} \sum_{i+j=k} \Bigg\{\sum_{\substack{ l=0 \\ l \not\in\{i-1,\dots,n-j\}}}^{n-2} (-1)^{n-l-1}  H_{k-1} \circ (G^{\otimes (i-1)} \otimes \lambda_{n+2-k} \otimes F^{\otimes (j-1)}) \circ (\id^{\otimes (n-l-1)} \otimes d \otimes \id^{\otimes l})  \\
&\phantom{+\sum_{k=3}^{n+2}\sum_{i+j=k}} + H_{k-1}  \circ \left[ G^{\otimes (i-1)} \otimes ( m \circ (G \otimes \lambda_{n+1-k}) - (-1)^n m \circ (\lambda_{n+1-k} \otimes F) - d \circ \lambda_{n+2-k}) \otimes F^{(j-1)}\right] \\
& \phantom{+\sum_{k=3}^{n+2}\sum_{i+j=k}} + d \circ H_{k-1} \circ  (G^{\otimes (i-1)} \otimes \lambda_{n+2-k} \otimes F^{\otimes (j-1)})\Bigg\},
\end{align*}
and, after some reordering,
\begin{align*}
&\sum_{k=2}^{n+2} \sum_{i+j=k} \Bigg\{ \sum_{l=0}^{k-2} (-1)^{l} H_{k-1} \circ (\id^{\otimes (n-l-2)} \otimes d \otimes \id^{\otimes l}) \circ (G^{\otimes (i-1)} \otimes \lambda_{n+2-k} \otimes F^{\otimes (j-1)}) \\
&\phantom{\sum_{k=2}^{n+2} \sum_{i+j=k}}\quad + \sum_{l=0}^{k-3} (-1)^{l} H_{k-2}\circ (\id^{\otimes (n-l-3)} \otimes m \otimes \id^{\otimes l}) \circ (G^{\otimes (i-1)} \otimes \lambda_{n+1-k} \otimes F^{\otimes (j-1)}) \\
&\phantom{\sum_{k=2}^{n+2} \sum_{i+j=k}}\quad + d \circ H_{k-1} \circ (G^{\otimes (i-1)} \otimes \lambda_{n+2-k} \otimes F^{\otimes (j-1)}) \Bigg\}.
\end{align*}
Now, we use the fact that $H$ is a DG functor. This implies, in particular, that
$$
\sum_{p + q = k} H_{p}  \circ H_{q} =  \sum_{p+q+1=k} H_{k} \circ (\id^{\otimes p} \otimes d \otimes \id^{\otimes q}) +  \sum_{p+q+2=k} H_{k} \circ (\id^{\otimes p} \otimes m \otimes \id^{\otimes q}) + d \circ H_k.
$$
Plugging this back into the last expression above, we obtain
\begin{align*}
& m \circ \left\{ (m \circ (H \otimes G)) \otimes \left[ \sum_{k=2}^{n+1}  \sum_{i + j = k} H_{k-1} \circ (G^{\otimes (i-1)} \otimes \lambda_{n+1-k} \otimes F^{\otimes (j-1)})\right]   \right\} \\
& -(-1)^n m \circ \left\{  \left[ \sum_{k=2}^{n+1}  \sum_{i + j = k} H_{k-1} \circ (G^{\otimes (i-1)} \otimes \lambda_{n+1-k} \otimes F^{\otimes (j-1)})\right] \otimes (m \circ (H \otimes F))\right\},
\end{align*}
which, attending to the definitions, gives the desired result.

For the second part, we observe that $(H \circ \lambda)_0 = H_1 \circ \lambda_0$ and, by our hypothesis on $\lambda$, we know that $\lambda_0(A)$ is an isomorphism for all $A \in \Ob \Ccal$. Thus, for all $A \in \Ob \Ccal$,
$$
H_1(\lambda_0(A)) \circ H_1 (\lambda_0(A)^{-1}) = H_1(\lambda_0(A) \circ  \lambda_0(A)^{-1}) = H_1(\id_{A}) =   \id_{H_0(A)}.
$$
This implies that $(H \circ \lambda)_0(A)$ is an isomorphism for all $A \in \Ob \Ccal$, as was to be shown.
\end{proof}



\subsection{$\infty$-Local systems}\label{sec:2.3}
Let $E = \bigoplus_{k \in \ZZ} E^k$ be a graded vector bundle over a manifold $X$. We consider the space of $E$-valued differential forms $\Omega^{\sbullet}(X,E)$ to be graded with respect to the total degree. A \emph{superconnection} on $E$ is an operator $D \colon \Omega^{\sbullet} (X,E) \to \Omega^{\sbullet}(X,E)$ of degree $1$ which satisfies the Leibniz rule
$$
D (\sigma \wedge \omega)= d \sigma \wedge \omega + (-1)^k \sigma \wedge D \omega,
$$
for all $\sigma \in \Omega^k(X)$ and $\omega \in  \Omega^{\sbullet} (X,E)$. The \emph{curvature} of $D$ is the operator $D^2$. This is an $\Omega^{\sbullet}(X)$-linear operator on $\Omega^{\sbullet} (X,E)$ of degree $2$ which is given by multiplication by an element of $\Omega^{\sbullet} (X,\End(E))$. If $D^2=0$, then we say that $D$ is a \emph{flat} superconnection. By an \emph{$\infty$-local system} on $X$ we mean a graded vector bundle $E$ equipped with a flat superconnection $D$. We will denote such an $\infty$-local system by $(E,D)$. 

It is useful to spell out what the flatness condition means for a given $\infty$-local system $(E,D)$ on $X$. The Leibniz rule implies that $D$ is completely determined by its restriction to $\Omega^0(X,E)$. Then we may decompose
$$
D = \sum_{k \geq 0} D_k,
$$
where $D_k$ is of partial degree $k$ with respect to the $\ZZ$-grading on $\Omega^{\sbullet}(X)$. It is clear that each $D_k$ for $k \neq 1$ is $\Omega^{\sbullet}(X)$-linear and therefore it is given by multiplication by an element $\alpha_k \in \Omega^k(X,\End(E)^{1-k})$. On the contrary, $D_1$ satisfies the Leibniz rule on each of the vector bundles $E^k$, so it must be of the form $d_{\nabla}$, where $\nabla$ is an ordinary connection on $E$ which preserves the $\ZZ$-grading. We can thus write
$$
D = d_{\nabla} + \alpha_0 + \alpha_2 + \alpha_3 + \cdots.
$$
From this formula, it is straightforward to check that the condition $D^2 = 0$ amounts to
\begin{align*}
\alpha_0^2 &= 0, \\
d_{\nabla} \alpha_0 &=0, \\
[\alpha_0,\alpha_2] + F_{\nabla} &= 0, \\
[\alpha_0,\alpha_{n+1}] + d_{\nabla}\alpha_n + \sum_{k=2}^{n-1} \alpha_k \wedge \alpha_{n+1-k} &= 0, \,\, n \geq 2,
\end{align*}
where $F_{\nabla}$ is the curvature of the connection $\nabla$. The first identity implies that we have a cochain complex of vector bundles with differential $\alpha_0$. The second equation expresses the fact that $\alpha_0$ is covariantly constant with respect to the connection $\nabla$. The third equation indicates that the connection $\nabla$ fails to be flat up to terms involving the homotopy $\alpha_2$ and the differential $\alpha_0$.  

Let us assume that $E$ is trivialized over $M$. This means that $E = X \times V$ for some graded vector space $V = \bigoplus_{k \in \ZZ} V^{k}$. In this case, we have $\alpha_k \in \Omega^k(X, \End(V)^{1-k})$ for $k \neq 1$. Moreover, we can write $d_{\nabla} = d + \alpha_1$ for some $\alpha_1 \in \Omega^1(X, \End(V)^{0})$. Thus, the superconnection $D$ may be expressed as $D = d + \alpha$, where $\alpha \in \Omega^{\sbullet}(X,\End(V))$ is the homogeneous element of total degree $1$ defined by $\alpha = \sum_{k \geq 0} \alpha_k$. In addition, a straightforward calculation gives
$$
D^2  = d \alpha + \alpha \wedge \alpha.
$$
Consequently, the totality of equations of the flatness condition is equivalent to the single statement that $\alpha$ satisfies
$$
d \alpha + \alpha \wedge \alpha = 0.
$$
In the terminology of \S~\ref{sec:2.1}, this says that $\alpha$ is a Maurer-Cartan element of the DG Lie algebra $\Omega^{\sbullet}(M,\End(E))$. 

As mentioned in the introduction, $\infty$-local systems on a manifold $X$ can be naturally organized into a DG category, which we denote by $\Loc_{\infty}(X)$. Its objects are, of course, $\infty$-local systems $(E,D)$ on $X$. Given two $\infty$-local systems $(E,D)$ and $(E',D')$ we define the space of morphisms to be the graded vector space $\Omega^{\sbullet}(X,\Hom(E,E'))$ with the differential $\partial_{D,D'}$ acting as 
$$
\partial_{D,D'} \omega = D' \wedge \omega - (-1)^{k} \omega \wedge D,
$$
for any homogeneous element $\omega$ of degree $k$. If $(E,D)$ and $(E',D')$ are trivialized over $X$ as in the previous paragraph, then $\partial_{D,D'}$ may be expressed by
$$
\partial_{D,D'} \omega = d \omega + \alpha' \wedge \omega - (-1)^{k} \omega \wedge \alpha.
$$

Let us now briefly recall the pullback operation of $\infty$-local systems. For a smooth map $f \colon X \to Y$ between two manifolds $X$ and $Y$, there is a DG functor $f^{*} \colon \Loc_{\infty}(Y) \to \Loc_{\infty}(X)$ which sends $E$ with structure superconnection 
$$
D = d_{\nabla} + \alpha_0 + \alpha_2 + \alpha_3 + \cdots,
$$
to $f^* E$ endowed with 
$$
f^* D = d_{f^*\nabla} + f^* \alpha_0 + f^* \alpha_2 + f^* \alpha_3 - \cdots,
$$
where $f^*E$ is the pullback of $E$ and $f^* \nabla$ is the pullback connection on $f^{*} E$. One can easily check that $(f^*E,f^*D)$ is indeed an $\infty$-local system on $X$, so the DG functor $f^*$ is well defined. We refer to it as the \emph{pullback functor} induced by $f$.

We will use the following homotopy invariance result, which is Propositon~4.4 of \cite{CAA-AQV-SVV2019}. 

\begin{proposition}\label{prop:2.3}
Let $X$ be a manifold and denote by $\iota_{s} \colon X \to X \times [0,1]$ the inclusion at height $s$ given by $\iota_{s}(p) = (p,s)$. Then, there exists an $\A_{\infty}$-natural isomorphism $\lambda \colon \iota_0^* \Rightarrow \iota_1^*$ between the pullback functors $\iota_0^*,\iota_1^* \colon \Loc_{\infty}(X \times  [0,1]) \to \Loc_{\infty}(X)$. 
\end{proposition}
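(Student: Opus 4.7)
The plan is to construct $\lambda$ by fiberwise integration along the projection $X \times [0,1] \to X$, using the parallel transport of the superconnection $D$ along the intervals $\{p\} \times [0,1]$. After a local trivialization, an $\infty$-local system on $X \times [0,1]$ takes the form $D = d + \alpha$ with $\alpha \in \Omega^{\sbullet}(X \times [0,1], \End(V))$ a Maurer-Cartan element of total degree $1$. Split $\alpha = \beta(t) + dt \wedge \gamma(t)$, where $\beta(t), \gamma(t)$ are smooth families of $\End(V)$-valued forms on $X$ of total degrees $1$ and $0$, respectively. Define $\lambda_0(E,D) \in \Omega^{\sbullet}(X, \Hom(\iota_0^* E, \iota_1^* E))^0$ to be the fiberwise parallel transport $U(1)$, where $U(t)$ solves the ODE $\partial_t U(t) = \gamma(t) U(t)$ with $U(0) = \id$. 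A direct computation using the $dt$-components of the Maurer-Cartan equation $d\alpha + \alpha \wedge \alpha = 0$ yields $\partial_{\iota_0^* D,\, \iota_1^* D} \lambda_0(E,D) = 0$, so $\lambda_0(E,D)$ is a morphism in $\Loc_{\infty}(X)$; its invertibility follows from reversing the ODE in time, which produces an explicit inverse parallel transport.

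The higher components $\lambda_n$ for $n \geq 1$ will be given by Chen-type iterated integrals over the standard simplex $\Delta_n$. Given composable morphisms $f_i \in \Omega^{\sbullet}(X \times [0,1], \Hom(E_i, E_{i+1}))$ for $i = 0, \ldots, n-1$, decomposed as $f_i = a_i(t) + dt \wedge c_i(t)$, the value $\lambda_n(f_{n-1} \otimes \cdots \otimes f_0)$ is defined up to Koszul signs by
\[
\int_{\Delta_n} U_n(1,t_n) \, c_{n-1}(t_n) \, U_{n-1}(t_n, t_{n-1}) \, c_{n-2}(t_{n-1}) \cdots c_0(t_1) \, U_0(t_1, 0) \, dt_1 \cdots dt_n,
\]
where $U_i(t,s)$ denotes the parallel transport of $D_i$ on $E_i$ from $s$ to $t$ along the fibers.

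The verification of the $\A_{\infty}$-natural transformation equation then reduces, via Stokes' theorem on $\Delta_n$, to matching up boundary contributions. Differentiating the integrand in the $X$-direction produces, through the Maurer-Cartan equation for each $D_i$ and the defining relation $\partial_{D_i, D_{i+1}} f_i = 0$, the terms in which $\lambda_n$ is applied to $\id^{\otimes i} \otimes b_1 \otimes \id^{\otimes j}$. The codimension-$1$ facets $\{t_i = t_{i+1}\}$ produce the composition terms in which $\lambda_{n-1}$ is applied to $\id^{\otimes i} \otimes b_2 \otimes \id^{\otimes j}$, and the two endpoint facets $\{t_1 = 0\}$ and $\{t_n = 1\}$ contribute, respectively, the terms $\lambda_{n-1}(f_{n-1} \otimes \cdots \otimes f_1) \circ \iota_0^*(f_0)$ and $\iota_1^*(f_{n-1}) \circ \lambda_{n-1}(f_{n-2} \otimes \cdots \otimes f_0)$ required by the definition of an $\A_{\infty}$-natural transformation.

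The main obstacle is the careful bookkeeping of Koszul signs arising from graded wedge products and the reordering of factors, together with the combinatorial matching of every Stokes boundary contribution to the precise term dictated by the $\A_{\infty}$-natural transformation equation. This is a lengthy but by now standard Chen iterated integral computation, in the spirit of the higher parallel transport constructions used in the higher Riemann-Hilbert correspondence. Combined with the invertibility of $\lambda_0(E,D)$ already established, this shows that $\lambda$ is an $\A_{\infty}$-natural isomorphism.
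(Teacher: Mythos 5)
Your construction is essentially the paper's: the paper (following Proposition~4.4 of the cited $\A_{\infty}$ Poincar\'e lemma paper) assembles the $\alpha_i$ and $\xi_i$ into a single block-matrix element $\omega$ and defines $\lambda_n$ as the $(0,n)$ block of the iterated integral $\Phi^{\omega}$ over the simplices, which upon expanding the diagonal blocks is exactly your interleaved formula $\int_{\Delta_n} U_n(1,t_n)\,c_{n-1}(t_n)\cdots c_0(t_1)\,U_0(t_1,0)\,dt_1\cdots dt_n$, verified by the same Stokes argument on $\Delta_n$. One small correction: $\lambda_n$ must be defined on arbitrary, not necessarily closed, morphisms, so you should not invoke $\partial_{D_i,D_{i+1}}f_i=0$ --- the terms $\lambda_n(\cdots\otimes df_i\otimes\cdots)$ produced by differentiating the integrand are precisely the $b_1$ contributions required by the $\A_{\infty}$-naturality relation.
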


The construction of such $\A_{\infty}$-natural isomorphism is explicit and goes as follows. Pick a collection of $\infty$-local systems $(E_0,D_0),\dots,(E_n,D_n)$ on $M \times [0,1]$ so that each $(E_i,D_i)$ is trivialized over $M \times [0,1]$, that is, $E_i = (M \times [0,1]) \times V_i$ for some graded vector space $V_i = \bigoplus_{k \in \ZZ}V_i^{k}$ and $D_i = d - \alpha_i$ for some Maurer-Cartan element $\alpha_i \in \Omega^{\sbullet}(M \times [0,1],\End(V_i))$. Next pick homogeneous elements $\xi_0 \in \Omega^{\sbullet}(X \times [0,1], \Hom(V_0,V_1)),\dots,\xi_{n-1} \in \Omega^{\sbullet}(X \times [0,1], \Hom(V_{n-1},V_n))$. If we put $V = \bigoplus_{i=0}^{n} V_i$ then both the elements $\alpha_0,\dots,\alpha_n$ and the elements $\xi_0,\dots,\xi_{n-1}$ may be seen as elements of $\Omega^{\sbullet}(X \times [0,1], \End(V))$. Setting $\omega = \sum_{i=0}^{n} \alpha_i + \sum_{i=1}^{n-1} \xi_i$, one can then consider the iterated integral of $\omega$, which is defined as
$$
\Phi^{\omega} = \id_V + \sum_{k = 1}^{\infty} (-1)^{\vert \omega \vert \sum_{i=1}^{k-1} (k-i)} \int_{\Delta_k} \pi_1^* \omega \wedge \cdots \wedge \pi_k^* \omega,
$$
where $\Delta_k$ is the standard $k$-simplex and where, for each $i = 1,\dots,k$, $\pi_i \colon X \times \Delta_k \to X \times [0,1]$ denotes the natural projection defined by $\pi_i (x, (s_1,\dots,s_n)) = (x,s_i)$. It is straightforward to check that $\Phi^{\omega}$ determines an element of $\Omega^{\sbullet}(X, \End(V))$. This allows us to define $\lambda_n (\xi_{n-1} \otimes \cdots \otimes \xi_{0}) \in \Omega^{\sbullet}(M,\Hom(V_0,V_n))$ as the $(0,n)$ block entry of $\Phi^{\omega}$,  and in this way we end up with a linear map
$$
\lambda_n \colon \Omega^{\sbullet}(X \times [0,1], \Hom(V_{n-1},V_n)) \otimes \cdots \Omega^{\sbullet}(X \times [0,1], \Hom(V_{0},V_1)) \to \Omega^{\sbullet}(M,\Hom(V_0,V_n)).
$$
Finally, one proves that this map  has degree $-n$ and that it satisfies the required relations to be an $\A_{\infty}$-natural transformation. 

\subsection{The higher Riemann-Hilbert correspondence }\label{sec:2.4}
In this subsection we review the higher Riemann-Hilbert correspondence, which relates $\infty$-local systems on $X$ to representations up to homotopy of the smooth $\infty$-groupoid of $X$. Intuitively, the higher Riemann-Hilbert correspondence is the statement that, just as a flat connection can be integrated to a representation of the fundamental groupoid, a flat superconnection can be integrated to a representation of the $\infty$-groupoid.
The details of the proof can be found in \cite{Abad-Schatz2013,Block-Smith2014}.

Let $K_{\sbullet}$ be a simplicial set with face and degeneracy maps $d_i^{p} \colon K_{p+1} \to K_{p}$ and $s_i^{p}\colon K_{p-1} \to K_{p}$ for $0 \leq i \leq p$. For $p \leq n$, the maps that send an $n$-simplex to its front and back $p$-th face will be denoted by $f_p^{n} \colon K_{p} \to K_{n}$ and $b_p^{n} \colon K_{p} \to K_{n}$. Explicitly, $f_p^{n} = d_{n+1}^{n} \circ \cdots \circ d_{p}^{p-1}$ and $b_{p}^{n} = d_{0}^{n} \circ \cdots \circ d_{0}^{p-1}$. Also, for $x \in K_p$, we let $v_p(x)$ denote its $p$-th vertex. By a cochain of degree $p$ on $K_{\sbullet}$ with values in an algebra $A$ we mean a map $F \colon K_p \to A$. The cup product of two cochains $F$ and $F'$ of degrees $p$ and $p'$, respectively, is the cochain of degree $p+p'$ defined by the formula
$$
(F \abxcup F')(x) = F(f_{p+p'}^{p}(x)) F'(b_{p+p'}^{p'}(x)),
$$
for all $x \in K_{p + p'}$. 

With this background in mind, a \emph{representation up to homotopy} of $K_{\sbullet}$ is the datum of a graded vector bundle $E = \bigoplus_{k \in \ZZ} E^{k}$ over $K_0$ and a collection $F_{\sbullet}=\{F_p\}_{p \geq 0}$ where $F_p$ is a cochain of degree $p$ with $F_p(x) \in \Hom(E_{v_p(x)}, E_{v_0(x)})^{1-p}$ for $x \in K_p$, such that the relation
$$
\sum_{i=1}^{p-1} (-1)^{i} F_{p-1}(d_{i}^{p}(x)) + \sum_{i=0}^{p} (-1)^{i+1} (F_i \abxcup F_{p-i})(x) = 0
$$
is satisfied for any $p \geq 0$. We will denote such representation up to homotopy by $(E,F_{\sbullet})$. 

The meaning of the above relation becomes clear by looking at low degree instances. For $p = 0$, it states that we have a point $x \in K_0$ and a linear map $F_0(x) \colon E_x \to E_x$ of degree $1$ with the property $F_0 (x) \circ F_0 (x) = 0$.  This implies that we have a cochain complex
$$
\cdots \xrightarrow{F_0(x)} E_{x}^{k-1} \xrightarrow{F_0(x)}  E_{x}^{k} \xrightarrow{F_0(x)}  E_{x}^{k+1}\xrightarrow{F_0(x)} \cdots.
$$
For $p =1$, we have an element $x \in K_1$ and a linear map $F_1(x) \colon E_{v_1(x)} \to E_{v_0(x)}$ of degree $0$ such that $F_0(v_0(x)) \circ F_1(x) = F_1(x) \circ F_0(v_1(x))$. This implies that $F_1(x)$ defines a morphism of complexes. Finally, for $p =2$, we have an element $x \in K_2$ and a linear map $F_2(x) \colon E_{v_2(x)} \to E_{v_0(x)}$ of degree $-1$ such that
$$
F_1(f_{2}^{1}(x))\circ F_1(b_{2}^{1}(x)) - F_1(d_{1}^{2}(x)) = F_0(f_2^0(x)) \circ F_2(x) + F_2(x) \circ F_0(b_2^0(x)).
$$
This means that the cochain morphisms $F_1(f_{2}^{1}(x))\circ F_1(b_{2}^{1}(x))$ and $F_1(d_{1}^{2}(x))$ from $E_{v_2(x)}$ to $E_{v_0(x)}$ are homotopic via a homotopy given by $F_2(x)$. 

Representations up to homotopy of $K_{\sbullet}$ are organized in a DG category denoted $\Rep_{\infty}(K_{\sbullet})$. A morphism of degree $n$ between two representations up to homotopy $(E,F_{\sbullet})$ and $(E',F'_{\sbullet})$ is a collection $\varphi_{\sbullet} = \{\varphi_{p}\}_{p \geq 0}$ where $\varphi_p$ is a cochain of degree $p$ such that $\varphi_p(x) \in \Hom(E_{v_p(x)},E'_{v_0(x)})^{n-p}$ for $x \in K_p$. If $(E,F_{\sbullet})$, $(E',F'_{\sbullet})$ and $(E'',F''_{\sbullet})$ are three representations up to homotopy, and  $\varphi_{\sbullet} \colon (E,F_{\sbullet}) \to (E',F'_{\sbullet})$ and $\varphi'_{\sbullet} \colon (E',F'_{\sbullet}) \to (E'',F''_{\sbullet})$ are two morphisms of degree $n$ and $n'$, respectively, then $\varphi_{\sbullet}$ and $\varphi'_{\sbullet}$ can be composed as follows:
$$
(\varphi'_{\sbullet} \circ \varphi_{\sbullet})_p = \sum_{i = 0}^{p}  (-1)^{n (p-i)}(\varphi'_{p-i} \abxcup \varphi_{i} ).
$$
With respect to this composition, the identity morphism is the collection $\varphi_0 = \id_{K_0}$ and $\varphi_p = 0$ for $p \geq 1$. The set of morphisms of degree $n$ from $(E,F_{\sbullet})$ and $(E',F'_{\sbullet})$ is usually denoted by $\underline{\Hom}^n((E,F_{\sbullet}),(E',F'_{\sbullet}))$.  
The differential  is given by the formula
\begin{align*}
(\partial_{F,F'} \varphi_{\sbullet})_p (x) &= \sum_{i=0}^{p} (-1)^{n(p-i)} (F'_{p-i}  \abxcup \varphi_{i})(x) + \sum_{i=0}^{p} (-1)^{n + p -i +1} (\varphi_{p-i} \abxcup F_i)(x)  \\
&\quad \,+ \sum_{i=1}^{p-1} (-1)^{i+n} \varphi_{p-1}(d_i^{p-1}(x))
\end{align*}
for any homogeneous element $\varphi_{\sbullet}$ of degree $n$. 

We should remark that the DG category $\Rep_{\infty}(K_{\sbullet})$ is functorial with respect to simplicial maps. More precisely, for a simplicial map $f_{\sbullet} \colon K_{\sbullet} \to L_{\sbullet}$ between two simplicial sets $K_{\sbullet}$ and $L_{\sbullet}$, there is a DG functor $f_{\sbullet}^* \colon \Rep_{\infty}(L_{\sbullet}) \to \Rep_{\infty}(K_{\sbullet})$ which sends $(E,F_{\sbullet})$ to $(f_0^*E,F_{\sbullet}\circ f_{\sbullet})$. It is straightforward to check that the latter is indeed a representation up to homotopy of $K_{\sbullet}$, so that the DG functor $f_{\sbullet}^*$ is well defined. We call it the \emph{pullback functor} induced by $f_{\sbullet}$.  

Let $X$ be a smooth manifold. The simplicial set $\pi_{\infty} X_{\sbullet}$, called the \emph{smooth fundamental $\infty$-groupoid} of $X$, is defined by setting $\pi_{\infty} X_p$ to be the set of smooth maps from the standard $p$-simplex $\Delta_p$ to $X$. The simplicial maps are defined by pulling back along the cosimplicial maps between the simplices.  It turns out that the DG category  $\Rep_{\infty}(\pi_{\infty}X_{\sbullet})$ is a global version of the DG category $\Loc_{\infty}(X)$ of $\infty$-local systems on $X$. This is the content of the higher Riemann-Hilbert correspondence, which is the following result, proved in \cite{Block-Smith2014}.

\begin{theorem}\label{thm:2.4}
There exists an integration $\A_{\infty}$-functor
$$
\Ical \colon \Loc_{\infty}(X) \longrightarrow \Rep_{\infty}(\pi_{\infty} X_{\sbullet}), 
$$
which is an $\A_{\infty}$-quasi-equivalence of DG categories. 
\end{theorem}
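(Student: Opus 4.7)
The plan is to build the integration functor $\Ical$ explicitly using Chen's iterated integrals applied to flat superconnections, and then verify that it is an $\A_\infty$-quasi-equivalence by reducing to a local calculation on simplices.

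First I would define $\Ical$ on objects. Given an $\infty$-local system $(E,D)$ on $X$, trivialize $E$ locally so that $D = d + \alpha$ with $\alpha \in \Omega^{\sbullet}(X,\End(V))$ a Maurer-Cartan element. For a smooth simplex $\sigma \colon \Delta_p \to X$, pull back $\alpha$ to $\Delta_p$ and set
$$
\Ical(E,D)_p(\sigma) = \int_{\Delta_p} \pi_1^* \sigma^*\alpha \wedge \cdots \wedge \pi_p^* \sigma^*\alpha,
$$
with suitable Koszul signs, where the $\pi_i$ are the projections from $X \times \Delta_p$ used in Section~2.3. Stokes' theorem applied to this iterated integral, combined with the Maurer-Cartan equation $d\alpha + \alpha \wedge \alpha = 0$, yields exactly the relation required for a representation up to homotopy. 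Globalization from local trivializations to $E$ is straightforward because the construction is tensorial. The higher components $\Ical_n$ of the $\A_\infty$-functor are defined by the analogous iterated integral formula with the $n$ morphism-valued forms $\omega_1,\dots,\omega_n$ from $\Omega^{\sbullet}(X,\Hom)$ inserted among the connection forms, exactly mirroring the construction of the $\A_\infty$-natural isomorphism $\lambda$ recalled in the paragraphs following Proposition~\ref{prop:2.3}. The $\A_\infty$-functor relations then reduce, via Stokes' theorem on $\Delta_n$, to a bookkeeping identity between boundary faces (which produce the compositions $b_2$ on the source side and the cup products on the target side) and the differentials of the $\omega_i$ (which produce $b_1$).

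Next I would verify that $\Ical$ is $\A_\infty$-quasi fully faithful. For any two $\infty$-local systems $(E,D)$ and $(E',D')$, the map
$$
\Ical_1 \colon \Omega^{\sbullet}(X,\Hom(E,E')) \longrightarrow \underline{\Hom}^{\sbullet}(\Ical(E,D),\Ical(E',D'))
$$
is a morphism of cochain complexes, and I would show it is a quasi-isomorphism by a partition-of-unity / Mayer-Vietoris argument. On a contractible open $U \subset X$ one can gauge away the flat superconnection, so both sides compute the cohomology of $\Hom(E,E')$ at a point. Applying Proposition~\ref{prop:2.3} (homotopy invariance) together with a good cover argument allows one to reduce the general case to this local model. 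The key input here is that iterated integrals are compatible with restriction along inclusions of open subsets, so the standard sheaf-theoretic descent applies.

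For $\A_\infty$-quasi essential surjectivity I would invoke a simplicial approximation / integrability result: every representation up to homotopy of $\pi_{\infty}X_{\sbullet}$ can be differentiated along a good cover to produce a flat superconnection whose integration is isomorphic, in $\mathbf{Ho}(\Rep_{\infty}(\pi_{\infty}X_{\sbullet}))$, to the original representation up to homotopy. The main obstacle in this whole proof is the verification of the $\A_\infty$-functor relations: matching the combinatorics of Stokes' theorem on products of simplices with the simultaneous combinatorics of the cup products $F_i \abxcup F_{p-i}$ and face maps $d_i^p$ appearing in the definition of $\Rep_{\infty}$. This is essentially Chen's theorem on the transport of iterated integrals across simplicial subdivisions, and careful sign accounting is unavoidable. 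Once that identity is in hand, quasi fully faithfulness and quasi essential surjectivity both follow from the local contractible model, and one concludes that $\Ical$ is an $\A_\infty$-quasi-equivalence.
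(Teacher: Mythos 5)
The paper does not prove Theorem~\ref{thm:2.4} at all: it is quoted from \cite{Block-Smith2014} (see also \cite{Abad-Schatz2013}), so there is no internal proof to compare against. Your construction of $\Ical$ by iterated integrals of the Maurer--Cartan form, with Stokes' theorem converting the flatness equation into the representation-up-to-homotopy identities and the higher $\Ical_n$ obtained by inserting morphism-valued forms, is indeed the standard construction used in those references, and your remark that the hard part is the sign/face-map bookkeeping is accurate.

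The genuine gap is in the quasi-equivalence half. For quasi essential surjectivity you ``invoke a simplicial approximation / integrability result'' asserting that every representation up to homotopy of $\pi_{\infty}X_{\sbullet}$ differentiates to a flat superconnection; but that assertion \emph{is} the essential surjectivity statement, so as written the step is circular --- you need an actual mechanism (the references build the superconnection inductively by an obstruction/perturbation argument, degree by degree, using fully faithfulness already established). Your Mayer--Vietoris argument for fully faithfulness also has an unaddressed obstacle: the Hom-complexes in $\Rep_{\infty}(\pi_{\infty}X_{\sbullet})$ are not sheaves on $X$, since a smooth simplex need not be subordinate to a given open cover, so ``standard sheaf-theoretic descent'' does not apply directly; one must either pass to small simplices and prove a subdivision invariance statement, or (as in the references, and as this paper itself does in the analogous Lemmas~\ref{lem:4.7} and \ref{lem:4.10}) filter both Hom-complexes by form degree and cochain degree and reduce the comparison of spectral sequences to the classical de Rham theorem with coefficients in the local system $\Hcal^{\sbullet}(\Hom(E,E'))$. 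Either route works, but one of them has to be carried out.
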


\subsection{$\gfrak$-DG spaces and $\gfrak$-$L_{\infty}$ spaces}\label{sec:2.5}
Let $G$ be a connected Lie group with Lie algebra $\gfrak$. Consider the DG Lie algebra $\TT \gfrak$ defined as follows. As a vector space, $\TT \gfrak = \u\gfrak \oplus \gfrak$. For $x \in \gfrak$, we denote by $i(x) \in \TT \gfrak^{-1}$ and $L(x) \in \TT \gfrak^0$ the corresponding generators. The Lie bracket of $\TT\gfrak$ is given by the Cartan relations
\begin{align*}
[i(x),i(y)] &= 0,\\
 [L(x),L(y)] &= L([x,y]), \\
  [L(x),i(y)] &=i([x,y]).
\end{align*}
The differential is defined by
\begin{align*}
d (i(x)) &= L(x), \\
d (L(x) ) &= 0.
\end{align*}
By a $\gfrak$-\emph{DG space} we mean a cochain complex $V$ together with a DG Lie algebra homomorphism $\rho:\TT\gfrak \to \End(V)$. That is, it consists of a representation of $\TT\gfrak$ on $V$, where the operators $i_{x}\in \End(V)^{-1}$ and $L_{x} \in \End(V)^0$ correspond to $i(x)$ and $L(x) $, respectively. The operators $i_{x}$ are called \emph{contractions} and the operators $L_{x}$ are called \emph{Lie derivatives}. Given a $\gfrak$-DG space $V$, one defines the \emph{basic subspace} $V_{\bas}$ to be the cochain subcomplex consisting of all $v \in V$ with $i_x v = 0$ and $L_x v = 0$ for all $x \in \gfrak$. Equivalently, $V_{\bas}$ is the subspace fixed by the action of $\TT\gfrak$.

If $V$ and $W$ are $\gfrak$-DG spaces, a \emph{homomorphism} $f \colon V \to W$ is just a morphism of cochain complexes commuting with the operators $i_x$ and $L_x$. It is evident that such a homomorphism induces a morphism between the corresponding basic subspaces $V_{\bas}$ and $W_{\bas}$. 

We also need to consider $\gfrak$-DG algebras. A $\gfrak$-DG algebra is a $\gfrak$-DG space $A$ endowed with the structure of DG algebra such that the action of $\TT\gfrak$ is by derivations. Homomorphisms of $\gfrak$-DG algebras are homomorphism of $\gfrak$-DG spaces which are also homomorphisms of graded algebras. 

The canonical example of a $\gfrak$-DG algebra is the De Rham complex $\Omega^{\sbullet}(P)$ of a principal bundle $P$ over a smooth manifold $X$ with structure group $G$. Here the differential is the exterior derivative of forms $d_P$, and, if we let $\rho$ denote the infinitesimal action of the Lie algebra $\gfrak$ on $P$, $i_{x}$ is the inner product of a form with $\rho(x)$, and $L_{x}$ is the Lie derivative of the form along $\rho(x)$. 

Another example of a $\gfrak$-DG algebra is the Chevalley-Eilenberg algebra $\CE(\gfrak)$ of the Lie algebra $\gfrak$. 
Even though we already gave a general description of the Chevalley-Eilenberg algebra of a DG Lie algebra, we will also use the following, more explicit description in the case of a Lie algebra.
As a graded algebra it is the exterior algebra $\Lambda^{\sbullet}\gfrak^*$, where $\gfrak^*$ has degree $1$.  For $\xi \in \Lambda^1\gfrak^*$, $\dCE \xi$ is the element in $\Lambda^2\gfrak^*$ defined by
$$
(\dCE \xi)(x,y) = - \xi([x,y]),
$$
for all $x,y \in \gfrak$; $\dCE$ is then canonically extended to a derivation on $\Lambda^{\sbullet}\gfrak^*$. It follows from the Jacobi identity that $\dCE$ defined in this manner squares to zero. The derivations $i_{x}$ and $L_{x}$ are defined on generators $\xi \in \Lambda^1\gfrak^*$, by
\begin{align*}
i_{x} \xi &= \langle \xi, x \rangle, \\
L_{x} \xi &= \ad_{x}^* \xi,
\end{align*} 
where $\ad_{x}^*$ denotes the infinitesimal coadjoint action of the element $x$. Both are then canonically extended as derivations of degree $-1$ and $0$, respectively, to all of $\Lambda^{\sbullet}\gfrak^*$. 

Explicit formulas for these various maps, which will be useful later on, are obtained by introducing a basis for $\gfrak^*$. Let $e_a$ be a basis for $\gfrak$ with dual basis $e^{a}$ and structure constants $f^{a}_{\phantom{a}bc}= \langle e^{a},[e_b,e_c] \rangle$, and write $i_a$ and $L_a$ for the contraction $i_{e_a}$ and the Lie derivative $L_{e_a}$ acting on $\CE(\gfrak)$. Then the explicit formulas for $\dCE$, $i_{a}$ and $L_{a}$ are the following:
\begin{align*}
\dCE e^{a} &= - \frac{1}{2} f^{a}_{\phantom{a}bc} e^{b} \wedge e^{c}, \\
i_{b} e^a &= \delta_{b}^{\phantom{b}a}, \\
L_{b} e^{a} &= - f^{a}_{\phantom{a}bc}e^{c}. 
\end{align*} 
Here and throughout the text, the convention that repeated indices are summed over is in place.  

Given a commutative $\gfrak$-DG algebra $A$, an \emph{algebraic connection} is a map $\theta \colon \gfrak^* \to A^1$, which satisfies the relations
\begin{align*}
i_{x} (\theta(\xi)) &= \langle \xi,x \rangle, \\
L_{x} (\theta(\xi)) &= \theta(\ad_{x}^*\xi),
\end{align*}
for all $x \in \gfrak$ and $\xi \in \gfrak^*$. Given a principal bundle $P$ over a smooth manifold $X$ with structure group $G$ and setting $A = \Omega^{\sbullet}(P)$, this is equivalent to the usual definition of a connection on $P$.

The Weil algebra $\W\gfrak$, associated to the Lie algebra $\gfrak$, is the universal commutative $\gfrak$-DG algebra, with a connection $\iota \colon \gfrak^* \to \W^1\gfrak$. Thus, given a commutative $\gfrak$-DG algebra $A$, with connection $\theta$, there exists a unique $\gfrak$-DG algebra homomorphism $c_{\theta} \colon \W\gfrak \to A$ such that $c_{\theta}\circ \iota = \theta$. We will refer to $c_{\theta}$ as the \emph{characteristic homomorphism} for the connection $\theta$. 

The most explicit realisation of the Weil algebra $\W\gfrak$ is as follows. The underlying graded commutative algebra of $\W\gfrak$ is the tensor product
$$
\W \gfrak = \Lambda^{\sbullet}\gfrak^* \otimes \uS^{\sbullet}\gfrak^*,
$$
where $\uS^{\sbullet}\gfrak^*$ is the symmetric algebra of $\gfrak^*$ and where we associate to each $\xi \in \gfrak^*$ the degree $1$ generators $t(\xi) \in \Lambda^1 \gfrak^*$ and the degree $2$ generators $w(\xi) \in \uS^1\gfrak^*$. The differential on $\W\gfrak$ is characterised by the formulas
\begin{align*}
\dW (t(\xi)) &=  w(\xi) + \dCE (t(\xi)), \\
\dW (w(\xi)) &= \dCE (w(\xi)),
\end{align*}
where $\dCE$ is the differential of the Chevalley-Eilenberg complex $\CE(\gfrak)$ of $\gfrak$. The operators $i_x$ and $L_x$ are given on generators by 
\begin{align*}
i_x (t(\xi)) &= \langle t(\xi),x \rangle,\\
i_x (w(\xi)) &= 0, \\
L_x(t(\xi)) &= \ad_x^*(t(\xi)), \\
L_x(w(\xi)) &= \ad_x^*(w(\xi)),
\end{align*}
for $\xi \in \gfrak^*$, and extended uniquely as derivations. 

It will be useful to express the differential $\dW$ and the operator $i_x$ and $L_x$ in terms of a dual basis $e^{a}$ of $\gfrak^*$ and the structure constants $f^{a}_{\phantom{a}bc}$ of $\gfrak$. If we write $t^{a} = t(e^{a})$ and $w^{a} = w(e^{a})$, they are as follows:
\begin{align*}
\dW  t^{a} &= w^{a} - \frac{1}{2} f^{a}_{\phantom{a}bc} t^{b} t^{c}, \\
\dW w^{a} &= f^{a}_{\phantom{a}bc} w^{b} t^{c}, \\
i_b t^{a} &= \delta_{b}^{\phantom{b}a}, \\
i_b w^{a} &= 0, \\
L_{b} t^{a} &= - f^{a}_{\phantom{a}bc} t^{c}, \\
L_{b} w^{a} &=- f^{a}_{\phantom{a}bc} w^{c}. 
\end{align*}
It is clear that the Weil algebra is freely generated by $t^{a}$ and $\dW t^{a}$. This implies that $\W\gfrak$ is acyclic with respect to $\dW$. 

Finally, given a commutative $\gfrak$-DG algebra $A$ and a connection $\theta \colon \gfrak^* \to A^{1}$, the characteristic homomorphism $c_{\theta} \colon \W\gfrak \to A$ is defined on the generators of $\W\gfrak$, as follows:
\begin{align*}
c_{\theta} (t(\xi)) &= \theta(\xi) , \\
c_{\theta} (w(\xi)) &= \dA (\theta(\xi)) - \theta (\dCE(t(\xi))).
\end{align*}
Checking the definitions shows that $c_{\theta}$ is a chain map with respect to the differential $\dW$ of $\W\gfrak$.

For example, if we let $\theta$ be any connection on a principal bundle $P \to X$ with structure group $G$, then, by assigning to each $\xi \in \gfrak^*$ the form $\xi \circ \theta \in \Omega^{1}(P)$, we obtain a linear map $\gfrak^* \to \Omega^1(P)$; in view of the above, this map can be canonically extended to a $\gfrak$-DG homomorphism $c_{\theta} \colon \W\gfrak \to \Omega^{\sbullet}(P)$, which in turn induces a morphism of cochain complexes on the basic subspaces $c_{\theta} \colon (\W\gfrak)_{\bas} \to \Omega^{\sbullet}_{\bas}(P)$. As $\uS^{\sbullet}\gfrak^*$ is precisely the set of elements in $\W\gfrak$ killed by $i_x$ for $x \in \gfrak$, it follows that $(\W\gfrak)_{\bas}$ coincides with the algebra of invariant polynomials $(\uS^{\sbullet}\gfrak^*)_{\inv}$ on $\gfrak$. On the target complex we have on the other hand that $\Omega^{\sbullet}_{\bas}(P)$ is canonically isomorphic to $\Omega^{\sbullet}(X)$, so that in fact $c_{\theta} \colon (\uS^{\sbullet}\gfrak^*)_{\inv} \to \Omega^{\sbullet}(X)$. Since the differential $\dW$ vanishes on $(\W\gfrak)_{\bas}=(\uS^{\sbullet}\gfrak^*)_{\inv}$, it follows that $c_{\theta}$ induces a cohomology map $c_{\theta*} \colon (\uS^{\sbullet}\gfrak^*)_{\inv} \to \uH^{\sbullet}_{\DR}(X)$. This is the Chern-Weil homomorphism for the principal bundle $P \to X$.

We conclude this subsection by introducing a generalization of the notion of $\gfrak$-DG space which will play a key role in the sequel. By a \emph{$\gfrak$-$\L_{\infty}$ space} we mean a cochain complex $V$ together with an $L_{\infty}$-morphism $\Phi\colon \TT \gfrak \to \End(V)$. To contrast this notion with that $\gfrak$-DG space, recall from \S~\ref{sec:2.1} that such a morphism corresponds to a collection of linear maps $\Phi_n \colon \bigodot^n(\u \TT\gfrak) \to \u \End(V)$ of degree zero which satisfy the constraints coming from the condition that $\bar{\Phi}$ commutes with the codifferentials of $\bigodot^n(\u \TT\gfrak)$ and $\bigodot^n(\u \End(V))$. Upon setting $i_x = \Phi_1(i(x)) \in \End(V)^{-1}$ and $L_x = \Phi_1(L(x)) \in \End(V)^{0}$, for low values of $n$, the constraints read 
\begin{align*}
[i_x,\delta_V] &= L_x, \\
[L_x,\delta_V] &= 0, \\
[i_x,i_y]  &=  \Phi_2(i(x),L(y))- \Phi_2(L(x),i(y)) - \delta_V(\Phi_2(i(x),i(y))) , \\
 L_{[x,y]} - [L_x,L_y] &=  \delta_V(\Phi_2(L(x),L(y))), \\
 i_{[x,y]} - [L_x,i_y]  &=  \Phi_2(L(x),L(y)) + \delta_V(\Phi_2(L(x),i(y))).
\end{align*}
From these we gather that $\gfrak$-$\L_{\infty}$ spaces are generalizations of $\gfrak$-DG spaces where the higher maps $\Phi_n$ are homotopical corrections to the failure of the Cartan relations.  

It is clear from the definitions that the Weil DG algebra $\W\gfrak$ of $\gfrak$ is isomorphic to the Chevalley-Eilenberg DG algebra of $\TT\gfrak$. In view of Proposition~\ref{prop:2.1}, one concludes that a $\gfrak$-$\L_{\infty}$ space can be equivalently specified by a cochain complex $V$ together with a Maurer-Cartan element of $\W\gfrak \otimes \End(V)$. This fact will be used throughout the text. 

\section{The Chern-Weil construction for $\infty$-local systems}
In this section we prove one of our main results, Theorem A. We show that, given a principal bundle $\pi: P \to X$ with structure group $G$ and any connection $\theta$ on $P$, there is DG functor $\CW_{\theta} \colon \InfLoc_{\infty}(\gfrak) \to \Loc_{\infty}(X)$, where  $\InfLoc_{\infty}(\gfrak)$ is a the DG category of basic $\gfrak$-$\L_\infty$ spaces. Moreover, we show that, given a different connection $\theta'$, the functors $\CW_{\theta}$ and $\CW_{\theta'}$ are related by an $\A_\infty$-natural  isomorphism. This construction provides a categorification of the Chern-Weil homomorphism. 

\subsection{Basic $\infty$-local systems}\label{sec:3.1}
Assume that $\pi \colon P \to X$ is a principal bundle with structure group $G$ and $E= \bigoplus_{k \in \ZZ} E^{k}$ is a graded $G$-equivariant vector bundle on $P$.  By the latter we mean a graded vector bundle $E= \bigoplus_{k \in \ZZ} E^{k}$ on $P$ together with a right action of $G$ on $E$ that preserves the decomposition for which the projection from $E$ to $P$ is $G$-equivariant and $G$ acts linearly on the fibers. Such an action induces a right action of $G$ on $\Lambda^{\sbullet} T^*P \otimes E$ turning it into graded $G$-equivariant vector bundle on $P$. Thus we get a natural
 left action of $G$ on the space of $E$-valued differential forms $\Omega^{\sbullet}(P,E)$: if $\varphi \in \Omega^{r}(P,E)$ and $g \in G$, then $\varphi \cdot g$ is the element of $\Omega^{r}(P,E)$ whose value at any $p \in P$ and any $v_1,\dots,v_r \in T_p P$ is
$$
(g \cdot \varphi)_p(v_1,\dots,v_r) = \varphi_{p \cdot g}\big((d\sigma_{g})_p (v_1),\dots,(d\sigma_{g})_p(v_r)\big) \cdot g^{-1}.
$$
Let $\gfrak$ be the Lie algebra of $G$. For $x \in \gfrak$, we write $i_x^{E}$ for the contraction operator on $\Omega^{\sbullet}(P,E)$. We also denote by $L_x^{E}$ the corresponding infinitesimal action on 
$\Omega^{\sbullet}(P,E)$. An element $\varphi \in \Omega^{\sbullet}(P,E)$ is called \emph{basic} if it satisfies
\begin{align*}
i_x^{E} \varphi &= 0, \\
L_x^{E} \varphi &= 0,
\end{align*}
for all $x \in \gfrak$. Since each $i_x^{E}$ and $L_x^{E}$ are derivations, the basic elements are a graded subspace of $\Omega^{\sbullet}(P,E)$. This subspace will be denoted by $\Omega^{\sbullet}_{\bas}(P,E)$. 

A special case which will be important for us occurs when $E$ is trivialised over $P$ in such a way that $E = P \times V$ for some graded vector space $V = \bigoplus_{k \in \ZZ} V^{k}$ together with a representation $\rho$ of $G$ on $V$ that preserves the decomposition. In this case, $\Omega^{\sbullet}(P,E)$ coincides with the space of $V$-valued differential forms $\Omega^{\sbullet}(P,V) =\Omega^{\sbullet}(P) \otimes V$, and if for each $x \in \gfrak$, we write $i_x$ for  the contraction operator on $\Omega^{\sbullet}(P)$ and $L_x$ for both the contraction operator acting on $\Omega^{\sbullet}(P)$ and that acting on $V$, we have that $i_x^{E} = i_x \otimes 1$ and $L_x^{E} = L_x \otimes 1 + 1 \otimes L_x$. 

Next we consider the homomorphism $\pi^{*} \colon \Omega^{\sbullet}(X,E/G) \to \Omega^{\sbullet}(P,\pi^*(E/G))$ and the isomorphism $\Phi \colon \Omega^{\sbullet}(P,E) \to \Omega^{\sbullet}(P,\pi^*(E/G))$ induced by the natural isomorphism $E \to \pi^*(E/G)$. We define 
$$
\pi^{\#} \colon \Omega^{\sbullet}(X,E/G) \to \Omega^{\sbullet}(P,E)
$$
to be the composition $\pi^{\#} = \Phi^{-1} \circ \pi^{*}$. The following result is standard. 

\begin{proposition}\label{prop:3.1}
The homomorphism $\pi^{\#}$ is injective. The image of $\pi^{\#}$ consists precisely of the elements which are basic. 
\end{proposition}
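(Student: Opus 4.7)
The plan is to separate the statement into three parts: injectivity, the inclusion that the image lies in the basic subspace, and the inclusion that every basic form descends. Injectivity is essentially formal, and since the map is factored as $\pi^{\#} = \Phi^{-1} \circ \pi^{*}$ with $\Phi$ an isomorphism, it suffices to check injectivity of $\pi^{*}$. But $\pi$ is a surjective submersion, so the ordinary pullback on forms with values in $\pi^{*}(E/G)$ is pointwise injective on differentials and hence injective on sections. This disposes of the first claim.

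For the inclusion that $\pi^{\#}\eta$ is basic for every $\eta \in \Omega^{\sbullet}(X,E/G)$, the two conditions are checked separately. The contraction $i_{x}^{E} \pi^{\#}\eta$ vanishes because the fundamental vector field generated by $x \in \gfrak$ is vertical, $d\pi$ annihilates it, and $\Phi^{-1}$ is a bundle map (in particular $\Omega^{\sbullet}(P)$-linear), so contraction by a vertical vector commutes with $\Phi^{-1}$ applied to a form pulled back along $\pi$. For the Lie derivative, since $\pi \circ \sigma_{g} = \pi$ and the isomorphism $E \to \pi^{*}(E/G)$ is $G$-equivariant by construction, the form $\pi^{\#}\eta$ is strictly $G$-invariant under the induced action on $\Omega^{\sbullet}(P,E)$; differentiating at the identity yields $L_{x}^{E}\pi^{\#}\eta = 0$.

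The reverse inclusion is the main content, and the strategy is to transport a basic $\omega \in \Omega^{\sbullet}(P,E)$ through $\Phi$ to obtain a basic form $\Phi(\omega) \in \Omega^{\sbullet}(P, \pi^{*}(E/G))$, then argue that $\Phi(\omega)$ lies in the image of $\pi^{*}$. I would do this locally: trivialize $\pi$ over an open chart $U \subseteq X$ as $\pi^{-1}(U) \cong U \times G$ and correspondingly trivialize $E/G$ over $U$. The condition $i_{x}^{E}\omega = 0$, pushed through $\Phi$, says $\Phi(\omega)$ is horizontal in the standard sense, so its value at $(u,g)$ depends only on the horizontal components of its tangent arguments. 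The condition $L_{x}^{E}\omega = 0$, together with connectedness of $G$ (used here to pass from infinitesimal invariance to strict invariance along each fiber), implies $\Phi(\omega)$ is $G$-invariant under the diagonal action. A horizontal and $G$-invariant form on $U \times G$ with coefficients in $\pi^{*}(E/G) = \mathrm{pr}_{1}^{*}(E/G|_{U})$ is manifestly of the form $\mathrm{pr}_{1}^{*}\eta|_{U}$, and the local descents $\eta|_{U}$ glue by uniqueness (already proved via injectivity) to a global $\eta \in \Omega^{\sbullet}(X,E/G)$ with $\pi^{\#}\eta = \omega$.

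The main obstacle is the coefficient-carrying descent in the third paragraph: it requires carefully exploiting the $G$-equivariance of $\Phi$ to ensure that horizontality plus invariance for a form valued in the \emph{pullback} bundle $\pi^{*}(E/G)$ actually produces a well-defined form on $X$ valued in $E/G$, rather than only in a quotient of $\pi^{*}(E/G)$. Once one is willing to trivialize both $E$ and $\pi$ simultaneously (which is possible over a common refinement of local trivializing charts), this becomes a direct unwinding; uniqueness of the gluing is then automatic from injectivity established in the first paragraph.
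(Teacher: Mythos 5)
The paper does not actually prove Proposition~\ref{prop:3.1}; it simply declares the result standard, so there is no internal argument to compare against. Your proof is the standard one and is correct: injectivity reduces to injectivity of $\pi^*$ (which holds because $\pi$ is a surjective submersion), the image lands in the basic subspace because fundamental vector fields are vertical and $\pi^{\#}\eta$ is strictly $G$-invariant, and the converse is the usual ``horizontal plus invariant descends'' argument carried out in local trivializations and glued via the injectivity already established. You are also right to flag the one genuinely non-formal point, namely that passing from $L_x^E\omega=0$ to strict $G$-invariance requires $G$ connected; this is part of the paper's standing hypotheses (\S\ref{sec:2.5}) and is indeed needed, since for disconnected $G$ the infinitesimal basic condition would not suffice for descent.
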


This proposition shows that $\pi^{\#}$ can be considered as an isomorphism
$$
\pi^{\#} \colon \Omega^{\sbullet}(X,E/G) \xlongrightarrow{\cong} \Omega^{\sbullet}_{\bas}(P,E).
$$
If $E$ is trivialised over $P$ as in the previous paragraph, then
$$
\pi^{\#} \colon \Omega^{\sbullet}(X,P \times_{\rho} V) \xlongrightarrow{\cong}  \Omega^{\sbullet}_{\bas}(P,V),
$$
where $P \times_{\rho} V$ is the associated vector bundle determined by $\rho$. 

Now we come to the definition of basic superconnection. Let $\pi \colon P \to X$ be a principal bundle with structure group $G$ and let $E=\bigoplus_{k\in \ZZ} E^k$ be a graded $G$-equivariant vector bundle on $P$. A \emph{basic superconnection} on $P$ is a superconnection $D$ on $E$ which is $G$-equivariant and satisfies the property that
$$
[D,i_{x}^{E}] = L_{x}^{E},
$$
for all $x \in \gfrak$. The reason for this definition is made clear by the following result. 

\begin{lemma}\label{lem:3.2}
If $D$ is a basic superconnection on $E$, then $D$ preserves the graded subspace $\Omega^{\sbullet}_{\bas}(P,E)$. 
\end{lemma}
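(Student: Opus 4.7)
The plan is a direct check of the two defining conditions for basicness on $D\varphi$, using exactly the two hypotheses packaged into the definition of a basic superconnection.

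First I would verify that contractions annihilate $D\varphi$. Fix $\varphi \in \Omega^{\sbullet}_{\bas}(P,E)$ and $x \in \gfrak$. Since $|D|=1$ and $|i_x^E|=-1$, the graded commutator is $[D,i_x^E] = D\circ i_x^E + i_x^E \circ D$. By hypothesis this equals $L_x^E$, so
$$
i_x^E(D\varphi) = L_x^E \varphi - D(i_x^E \varphi) = 0,
$$
because $\varphi$ is basic. Second, for the Lie derivative, $G$-equivariance of $D$ means $D(g\cdot\varphi) = g \cdot D\varphi$ for all $g \in G$. Differentiating this identity at $g = e$ in the direction $x \in \gfrak$ yields the infinitesimal relation $[D, L_x^E] = D\circ L_x^E - L_x^E \circ D = 0$, since $|L_x^E| = 0$. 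Hence
$$
L_x^E(D\varphi) = D(L_x^E \varphi) = 0,
$$
again because $\varphi$ is basic. Together these two computations show $D\varphi \in \Omega^{\sbullet}_{\bas}(P,E)$.

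There is essentially no obstacle here: the result is a formal consequence of the definition. The only minor care is making sure the signs in the graded commutator work out correctly, and verifying that the global $G$-equivariance assumption does pass to its infinitesimal counterpart $[D, L_x^E]=0$ (which is automatic since $G$ is a Lie group and the action is smooth, so differentiating the equivariance relation is legitimate). In fact, the lemma can be paraphrased as saying that the two Cartan-type conditions on $D$, namely $[D,i_x^E]=L_x^E$ and $[D,L_x^E]=0$, are precisely what is needed for $D$ to descend to the basic subcomplex, so the proof is a one-line check once these commutation relations are in place.
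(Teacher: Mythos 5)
Your proof is correct and follows the same route as the paper: derive $[D,L_x^E]=0$ from $G$-equivariance and combine it with the defining relation $[D,i_x^E]=L_x^E$ to check both basicness conditions on $D\varphi$. You simply spell out the graded-commutator signs more explicitly than the paper does.
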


\begin{proof}
Since $D$ is a superconnection on $E$ which commutes with the action of $G$ on $\Omega^{\sbullet}(P,E)$, we see that 
$$
[D,L_{x}^{E}] = 0,
$$
for all $x \in \gfrak$. This, combined with the defining relation, implies that if $\varphi \in \Omega^{\sbullet}_{\bas}(P,E)$ then $D \varphi \in \Omega^{\sbullet}_{\bas}(P,E)$. 
\end{proof}

If $D$ is a basic superconnection on $E$, we will also denote its restriction to the graded subspace $\Omega^{\sbullet}_{\bas}(P,E)$ by $D$. By a \emph{basic $\infty$-local system} on $P$ we mean a graded $G$-equivariant vector bundle $E$ on $P$ endowed with a flat basic superconnection $D$. As usual, we will denote such a basic $\infty$-local system as a pair $(E,D)$. 

Just as with ordinary $\infty$-local systems, all basic $\infty$-local system on a principal bundle $\pi \colon P \to X$ can be naturally organised into a DG category, we denote by $[\Loc_{\infty}(P)]_{\bas}$. Its objects are, of course, basic $\infty$-local systems $(E,D)$ on $P$. Given two basic $\infty$-local systems $(E,D)$ and $(E',D')$ we define the space of morphism to be the graded vector space $\Omega^{\sbullet}_{\bas}(P,\Hom(E,E'))$ with the differential $\partial_{D,D'}$ acting as
$$
\partial_{D,D'} \varphi = D' \wedge \varphi - (-1)^k \varphi \wedge D,
$$ 
for any homogeneous basic element $\varphi$ of degree $k$. It is important to note that $[\Loc_{\infty}(P)]_{\bas}$ is a non full DG subcategory of $\Loc_{\infty}(P)$. 

We discuss next the geometric significance of $[\Loc_{\infty}(P)]_{\bas}$. For this purpose, consider the pullback DG functor $\pi^* \colon \Loc_{\infty}(X) \to \Loc_{\infty}(P)$. We have the following fundamental result.

\begin{proposition}\label{prop:3.3}
For every object $(E,D)$ in $[\Loc_{\infty}(P)]_{\bas}$ there is an isomorphism between $(E,D)$ and an object of the form $\pi^{*}(\bar{E},\bar{D})$ with $(\bar{E},\bar{D})$ in $\Loc_{\infty}(X)$. 
\end{proposition}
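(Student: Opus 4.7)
The plan is to set $\bar{E} = E/G$, which is a well-defined graded vector bundle on $X$ because $G$ acts freely on $P$ and equivariantly on $E$ preserving the grading, and then to construct a flat superconnection $\bar{D}$ on $\bar{E}$ by transporting $D$ along the isomorphism $\pi^{\#}$ of Proposition~\ref{prop:3.1}. Precisely, Lemma~\ref{lem:3.2} guarantees that $D$ preserves the basic subspace $\Omega^{\sbullet}_{\bas}(P,E)$, so I can define
$$
\bar{D} := (\pi^{\#})^{-1} \circ D \circ \pi^{\#} \colon \Omega^{\sbullet}(X,\bar{E}) \to \Omega^{\sbullet}(X,\bar{E}),
$$
which makes sense since $\pi^{\#}$ is an injection with image $\Omega^{\sbullet}_{\bas}(P,E)$.

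Next I would verify that $\bar{D}$ is a flat superconnection. The key algebraic fact needed is that $\pi^{\#}$ is $\Omega^{\sbullet}(X)$-linear in the sense $\pi^{\#}(\sigma \wedge \omega) = \pi^{*}\sigma \wedge \pi^{\#}\omega$ for $\sigma \in \Omega^{\sbullet}(X)$ and $\omega \in \Omega^{\sbullet}(X,\bar E)$; this follows from the definitions since $\Phi$ is $\Omega^{\sbullet}(P)$-linear. Combining this with $d_P \pi^{*}\sigma = \pi^{*} d_X \sigma$ and the Leibniz rule for $D$, I can compute $\pi^{\#}(\bar{D}(\sigma \wedge \omega))$ and compare it with $\pi^{\#}(d\sigma \wedge \omega + (-1)^{\vert \sigma \vert}\sigma \wedge \bar{D}\omega)$; the injectivity of $\pi^{\#}$ then yields the Leibniz rule for $\bar{D}$. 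Flatness is immediate from the identity $\pi^{\#} \bar{D}^{2} = D^{2} \pi^{\#} = 0$ and the injectivity of $\pi^{\#}$.

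It remains to exhibit an isomorphism $(E,D) \cong \pi^{*}(\bar{E},\bar{D})$ in $\Loc_{\infty}(P)$. The natural $G$-equivariant vector bundle isomorphism $E \to \pi^{*}(E/G)$ sending $e \mapsto (p(e),[e])$ is a closed degree-$0$ element $\varphi$ of $\Omega^{0}(P,\Hom(E,\pi^{*}\bar{E}))$, and is pointwise invertible. To check that it intertwines $D$ with $\pi^{*}\bar{D}$, equivalently that $\Phi \circ D = \pi^{*}\bar{D} \circ \Phi$ on $\Omega^{\sbullet}(P,E)$, I would argue that both sides are superconnections on $\pi^{*}\bar{E}$, so by the Leibniz rule it suffices to verify the equality on sections of the form $\pi^{*}\bar{s}$ with $\bar{s} \in \Gamma(\bar{E})$, since locally every section of $\pi^{*}\bar{E}$ is a $C^{\infty}(P)$-combination of such pullbacks. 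On these generators the equality reduces, using $\Phi \circ \pi^{\#} = \pi^{*}$, to
$$
\Phi(D(\pi^{\#}\bar{s})) = \Phi(\pi^{\#}(\bar{D}\bar{s})) = \pi^{*}(\bar{D}\bar{s}) = \pi^{*}\bar{D}(\pi^{*}\bar{s}) = \pi^{*}\bar{D}(\Phi(\pi^{\#}\bar{s})),
$$
which is just the definition of $\bar{D}$.

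The arguments are largely bookkeeping; the only step which requires some care is the verification that $\varphi$ intertwines $D$ with $\pi^{*}\bar{D}$, since $\bar{D}$ is only defined through its action on basic forms while the required identity must be checked on all of $\Omega^{\sbullet}(P,E)$. This is handled by reducing to generators and invoking the Leibniz rule, and there are no deeper obstructions.
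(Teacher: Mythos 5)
Your proposal is correct and follows essentially the same route as the paper: set $\bar{E}=E/G$, transport $D$ through $\pi^{\#}$ to define $\bar{D}$, and use the natural $G$-equivariant isomorphism $E \to \pi^*\bar{E}$ to identify $(E,D)$ with $\pi^*(\bar{E},\bar{D})$. You supply more detail than the paper does on the Leibniz rule for $\bar{D}$ and on reducing the intertwining identity to pullback sections, but the underlying argument is the same.
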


\begin{proof}
Let $(E,D)$ be an object in $[\Loc_{\infty}(P)]_{\bas}$. We know from Lemma~\ref{lem:3.2} that $D$ preserves the graded subspace $\Omega^{\sbullet}_{\bas}(P,E)$. Set $\bar{E}=E/G$ and define an operator $\bar{D}\colon \Omega^{\sbullet}(X,\bar{E}) \to \Omega^{\sbullet}(X,\bar{E})$ by means of the diagram 
$$
\xymatrix{ \Omega^{\sbullet}(X,\bar{E}) \ar[r]^-{\bar{D}} \ar[d]^-{\cong}_-{\pi^{\#}} & \Omega^{\sbullet}(X,\bar{E}) \ar[d]_-{\cong}^-{\pi^{\#}} \\ \Omega^{\sbullet}_{\bas}(P,E) \ar[r]^-{D} & \Omega^{\sbullet}_{\bas}(P,E).}
$$
Then, it is immediate to verify that $\bar{D}$ is a flat superconnection on $\bar{E}$. Thus, the pair $(\bar{E},\bar{D})$ defines an object in $\Loc_{\infty}(X)$. Now consider the isomorphism $\Phi \colon \Omega^{\sbullet}(P,E) \to \Omega^{\sbullet}(P,\pi^* \bar{E})$ defined as above. Since the isomorphism $E \to \pi^* \bar{E}$ is $G$-equivariant and the contraction operator $i_{x}^{E}$ only acts on $\Omega^{\sbullet}(P)$, it follows that $\Phi$ restricts to an isomorphism from $\Omega^{\sbullet}_{\bas}(P,E)$ to $\Omega^{\sbullet}_{\bas}(P,\pi^* \bar{E})$, which we also denote by $\Phi$. Notice that this isomorphism is of degree $0$. Moreover, bearing in mind the definition of $\bar{D}$, we obtain the commutative diagram
$$
\xymatrix{ \Omega^{\sbullet}_{\bas}(P,E) \ar[r]^-{D} \ar[d]_-{\Phi} & \Omega^{\sbullet}_{\bas}(P,E) \ar[d]^-{\Phi} \\ \Omega^{\sbullet}_{\bas}(P,\pi^* \bar{E}) \ar[r]^-{\pi^* \bar{D}} & \Omega^{\sbullet}_{\bas}(P,\pi^* \bar{E}).}
$$
This shows that $\Phi$ is an isomorphism from $(E,D)$ to $\pi^*( \bar{E},\bar{D})$.
\end{proof}

The previous proposition shows that $\pi^*$ can be considered as a DG functor
$$
\pi^* \colon \Loc_{\infty}(X) \to [\Loc_{\infty}(P)]_{\bas}.
$$
In fact, we obtain a lot more. 

\begin{proposition}\label{prop:3.4}
The DG functor $\pi^* \colon \Loc_{\infty}(X) \to [\Loc_{\infty}(P)]_{\bas}$ is a quasi-equivalence. 
\end{proposition}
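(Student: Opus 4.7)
The plan is to verify separately the two conditions defining a quasi-equivalence: quasi essential surjectivity and quasi fully faithfulness. The former has essentially been done already: Proposition~\ref{prop:3.3} produces, for every object $(E,D)$ of $[\Loc_{\infty}(P)]_{\bas}$, an object $(\bar{E},\bar{D})$ of $\Loc_{\infty}(X)$ together with an isomorphism $\pi^{*}(\bar{E},\bar{D}) \cong (E,D)$ in $[\Loc_{\infty}(P)]_{\bas}$. In particular, the induced functor on homotopy categories is essentially surjective.

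For quasi fully faithfulness, I expect the stronger statement to hold, namely that for each pair $(\bar{E},\bar{D})$, $(\bar{E}',\bar{D}')$ of objects of $\Loc_{\infty}(X)$ the map
$$
\pi^{\#} \colon \Omega^{\sbullet}(X,\Hom(\bar{E},\bar{E}')) \longrightarrow \Omega^{\sbullet}_{\bas}(P,\Hom(\pi^{*}\bar{E},\pi^{*}\bar{E}'))
$$
is an actual isomorphism of cochain complexes. The underlying map of graded vector spaces is an isomorphism by Proposition~\ref{prop:3.1} applied to the graded vector bundle $\Hom(\bar{E},\bar{E}')$ on $X$ (whose pullback is canonically identified with $\Hom(\pi^{*}\bar{E},\pi^{*}\bar{E}')$ as a $G$-equivariant bundle on $P$). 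What then remains is to check that $\pi^{\#}$ intertwines the differentials $\partial_{\bar{D},\bar{D}'}$ and $\partial_{\pi^{*}\bar{D},\pi^{*}\bar{D}'}$; this is an immediate consequence of the way the pullback superconnection $\pi^{*}\bar{D}$ is defined, together with the fact that $\pi^{\#}$ is multiplicative with respect to wedge products.

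In short, the functor $\pi^{*}$ is quasi essentially surjective by Proposition~\ref{prop:3.3} and is fully faithful on Hom complexes (hence quasi fully faithful) via Proposition~\ref{prop:3.1}. The only genuine point requiring attention is the compatibility between $\pi^{\#}$ and the superconnection differentials on morphism spaces, and this is routine once one unwinds the definitions of basic superconnection and pullback superconnection.
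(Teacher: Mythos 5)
Your proposal is correct and follows essentially the same route as the paper: quasi essential surjectivity is read off from Proposition~\ref{prop:3.3}, and quasi fully faithfulness is obtained by identifying the map on Hom complexes with $\pi^{\#}$ (via the canonical identification of $\Hom(\pi^{*}\bar{E},\pi^{*}\bar{E}')$ with the pullback of $\Hom(\bar{E},\bar{E}')$), invoking Proposition~\ref{prop:3.1} to see it is an isomorphism of graded vector spaces, and then checking compatibility with the superconnection differentials. The paper carries out exactly this check, so no changes are needed.
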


\begin{proof}
By virtue of Proposition~\ref{prop:3.3}, we only have to show that $\pi^* \colon \Loc_{\infty}(X) \to [\Loc_{\infty}(P)]_{\bas}$ is quasi fully faithful. So for any pair of objects $(\bar{E},\bar{D})$ and $(\bar{E}',\bar{D}')$ in $\Loc_{\infty}(X)$ consider the associated map
$$
\pi^* \colon \Omega^{\sbullet}(X,\Hom(\bar{E},\bar{E}')) \longrightarrow \Omega^{\sbullet}_{\bas}(P,\Hom(\pi^*\bar{E},\pi^*\bar{E}')).
$$
Then, if $\Psi \colon \Omega^{\sbullet}(X,\Hom((\pi^*\bar{E})/G,(\pi^*\bar{E}')/G)) \to \Omega^{\sbullet}(X,\Hom(\bar{E},\bar{E}'))$ denotes the isomorphism induced by the natural isomorphisms $(\pi^*\bar{E})/G \to \bar{E}$ and $(\pi^*\bar{E}')/G \to \bar{E}'$, is not hard to check that $\pi^* \circ \Psi = \pi^{\#}$. This shows that $\pi^*$ is an isomorphism. On the other hand, if $\omega \in \Omega^{\sbullet}(X,\Hom(\bar{E},\bar{E}'))$ is a homogeneous element of degree $k$, we have
$$
\pi^*( \partial_{\bar{D},\bar{D}'} \omega) = \pi^*( \bar{D}' \wedge \omega - (-1)^{k} \omega \wedge \bar{D} ) = \pi^*\bar{D}' \wedge \pi^*\omega - (-1)^{k} \pi^*\omega \wedge \pi^*\bar{D} = \partial_{\pi^*\bar{D},\pi^*\bar{D}'} (\pi^* \omega ).
$$
It follows that $\pi^*$ is in fact an isomorphism of cochain complexes and thus, in particular, a quasi-isomorphism. 
\end{proof}

We shall see the importance of Proposition~\ref{prop:3.4} in the following section.

\subsection{The Chern-Weil DG functor}\label{sec:3.2}
In this part we describe the construction of a characteristic DG functor which extends the Chern-Weil homomorphism for principal bundles to the realm of $\infty$-local systems. We begin by recording a number of preliminary observations. 

Let $G$ be a connected Lie group with Lie algebra $\gfrak$ and let $V$ be a $\gfrak$-$\L_{\infty}$ space. For $x \in \gfrak$, by a slight abuse of notation, we will indistinctly write $i_x$ and $L_x$ for the contraction and Lie derivative operators acting on $\W\gfrak$ or $V$. With this caveat, it is a fact that $\W\gfrak \otimes V$ acquires the structure of a $\gfrak$-$\L_{\infty}$ space, where the differential, contraction and Lie derivative operators are $\dW \otimes 1 + 1 \otimes \dV$, $i_{x} \otimes 1$ and $L_{x}  \otimes 1 + 1 \otimes L_{x}$, respectively. Thus, we may consider the basic subspace $(\W\gfrak \otimes V)_{\bas}$. 

On the other hand, recall from the remark made at the end of \S\ref{sec:2.5} that the $\gfrak$-$\L_{\infty}$ space $V$ determines and is determined by a Maurer-Cartan element of $\W\gfrak \otimes \End(V)$, which we write as $\alpha_V$. We shall say that $V$ is \emph{basic} if the following identities are satisfied
\begin{align*}
[\alpha_V, i_{x}\otimes 1]  &=1 \otimes L_{x}, \\
[\alpha_V, L_{x}  \otimes 1 + 1 \otimes L_{x}] &= 0,
\end{align*}
for all $x \in \gfrak$. This definition is justified by the following construction.

We wish to construct a derivation $D  \colon \W\gfrak \otimes V \to \W\gfrak \otimes V$ of homogeneous degree $1$, such that $D^2=0$. To do so, we may simply define
$$
D = \dW \otimes 1 + 1 \otimes \dV + \alpha_V. 
$$
That $D^2$ is zero follows by follows by a straightforward calculation. Also, the following property holds true. 

\begin{lemma}\label{lem:3.5}
If $V$ is a basic $\gfrak$-$\L_{\infty}$ space, then $D$ preserves the graded subspace $(\W\gfrak \otimes V)_{\bas}$. 
\end{lemma}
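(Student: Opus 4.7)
The plan is to verify that, as graded operators on $\W\gfrak \otimes V$,
\begin{align*}
[i_x \otimes 1, D] &= L_x \otimes 1 + 1 \otimes L_x, \\
[L_x \otimes 1 + 1 \otimes L_x, D] &= 0,
\end{align*}
where $[\cdot,\cdot]$ denotes the graded commutator. Granting these two identities, the lemma follows at once: for $\omega \in (\W\gfrak \otimes V)_{\bas}$ both $(i_x \otimes 1)\omega$ and $(L_x \otimes 1 + 1 \otimes L_x)\omega$ vanish, so rearranging each commutator gives
\begin{align*}
(i_x \otimes 1)\,D\omega &= [i_x \otimes 1, D]\omega = (L_x \otimes 1 + 1 \otimes L_x)\omega = 0, \\
(L_x \otimes 1 + 1 \otimes L_x)\,D\omega &= D\,(L_x \otimes 1 + 1 \otimes L_x)\omega = 0,
\end{align*}
so $D\omega$ again lies in $(\W\gfrak \otimes V)_{\bas}$.

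The first identity will be checked one summand at a time on $D = \dW \otimes 1 + 1 \otimes \dV + \alpha_V$. The summand $\dW \otimes 1$ contributes $[i_x, \dW]\otimes 1 = L_x \otimes 1$ by the Cartan relation on the Weil algebra recorded in \S\ref{sec:2.5}. The summand $1 \otimes \dV$ contributes zero since $i_x \otimes 1$ and $1 \otimes \dV$ act on disjoint tensor factors. The summand $\alpha_V$ contributes $[i_x \otimes 1, \alpha_V]$; since $i_x \otimes 1$ has degree $-1$ and $\alpha_V$ has degree $1$, graded symmetry gives $[i_x \otimes 1, \alpha_V] = [\alpha_V, i_x \otimes 1]$, and this equals $1 \otimes L_x$ by the first basicness hypothesis. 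Summing the three contributions yields the claimed identity.

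The second identity will be handled in the same way. The commutator of $L_x \otimes 1 + 1 \otimes L_x$ with $\dW \otimes 1$ reduces to $[L_x, \dW]\otimes 1$, and $L_x$ is a chain map on $\W\gfrak$ because the relation $dL(x)=0$ holds in $\TT\gfrak$. The commutator with $1 \otimes \dV$ reduces to $1 \otimes [L_x, \dV]$, which vanishes by the defining $\gfrak$-$\L_\infty$ identity $[L_x, \delta_V] = 0$ from \S\ref{sec:2.5}. Finally, the commutator with $\alpha_V$ vanishes by the second basicness hypothesis (the graded-symmetry sign here is immaterial).

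The entire argument is a careful graded-commutator computation; there is no conceptual obstacle. The only subtlety is the sign bookkeeping needed to pass between the forms $[\alpha_V, \cdot]$ (in which the basicness conditions are stated) and $[\cdot, \alpha_V]$ (needed when computing $[i_x \otimes 1, D]$ and $[L_x \otimes 1 + 1 \otimes L_x, D]$). In particular, no component expression for $\alpha_V$ is required.
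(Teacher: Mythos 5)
Your proposal is correct and follows essentially the same route as the paper: both verify the two graded-commutator identities $[D, i_x \otimes 1] = L_x \otimes 1 + 1 \otimes L_x$ and $[D, L_x \otimes 1 + 1 \otimes L_x] = 0$ using the Cartan relations on $\W\gfrak$ and the two basicness hypotheses on $\alpha_V$. The only difference is presentational — the paper expands everything element-wise on $\varphi \otimes v$, while you argue summand-by-summand at the operator level — and your sign bookkeeping checks out.
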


\begin{proof}
We wish to show that if $\varphi \in (\W\gfrak \otimes V)_{\bas}$ then $D \varphi \in (\W\gfrak \otimes V)_{\bas}$.
For this it will be enough to verify that
$$
[D, L_c \otimes 1 + 1 \otimes L_c] = 0,
$$
and 
$$
[D,i_c \otimes 1] = L_c \otimes 1 + 1 \otimes L_c. 
$$
Fix an element of $\W\gfrak \otimes V$ of the form $\varphi \otimes v$. Then a straightforward computation gives
\begin{align*}
D\left( (L_{c} \otimes 1 + 1 \otimes L_{c})(\varphi \otimes v)\right) &= \dW(L_c \varphi) \otimes v + (-1)^{\vert \varphi \vert} L_{c}\varphi \otimes \dV v \\
&\quad +  \dW \varphi \otimes L_c v  + (-1)^{\vert \varphi \vert} \varphi \otimes \dV( L_c  v) \\
&\quad + \alpha_V \left( (L_{c} \otimes 1 + 1 \otimes L_{c})(\varphi \otimes v)\right) ,
\end{align*}
and 
\begin{align*}
(L_{c} \otimes 1 + 1 \otimes L_{c})\left( D(\varphi \otimes v)\right) &=  L_c(\dW \varphi) \otimes v + (-1)^{\vert \varphi \vert} L_{c}\varphi \otimes \dV v \\
&\quad +  \dW \varphi \otimes L_c v  + (-1)^{\vert \varphi \vert} \varphi \otimes L_c (\dV v) \\
&\quad + (L_{c} \otimes 1 + 1 \otimes L_{c})\left( \alpha_V(\varphi \otimes v)\right).
\end{align*}
Therefore,
\begin{align*}
[D, L_{c} \otimes 1 + 1 \otimes L_{c}] (\varphi \otimes v) &= [\dW,L_c]\varphi \otimes v + (-1)^{\vert \varphi \vert} \varphi \otimes [\dV,L_c]v \\
&\quad + [\alpha_V , L_{c} \otimes 1 + 1 \otimes L_{c}](\varphi \otimes v) \\
&= 0.
\end{align*}
Thus the first identity is established. On the other hand, again by a direct computation,
\begin{align*}
D \left( (i_c \otimes 1)(\varphi \otimes v) \right) &= \dW (i_c \varphi) \otimes v - (-1)^{\vert \varphi \vert} i_c \varphi \otimes \dV v  \\
&\quad + \alpha_V \left( (i_c \otimes 1)(\varphi \otimes v) \right),
\end{align*}
and
\begin{align*}
(i_c \otimes 1)\left( D(\varphi \otimes v)\right) &= i_c (\dW \varphi) \otimes v + (-1)^{\vert \varphi \vert} i_c \varphi \otimes \dV v  \\
&\quad  + (i_c \otimes 1)\left( \alpha_V (\varphi \otimes v)\right).
\end{align*}
Hence,
\begin{align*}
[D, i_{c} \otimes 1] (\varphi \otimes v) &= [\dW,i_c] \varphi \otimes v +  [\alpha_V, i_c \otimes 1](\varphi \otimes v) \\
&= L_c \varphi \otimes v + (1 \otimes L_c) (\varphi \otimes v) \\
&= (L_c \otimes 1 + 1 \otimes L_c) (\varphi \otimes v),
\end{align*}
and, consequently, the second identity also holds. 
\end{proof}

The content of the previous discussion is that, provided $V$ is a basic $\gfrak$-$\L_{\infty}$ space, the differential $D$ can be regarded as a flat basic superconnection on the graded vector bundle $EG \times V$. As a result, ignoring the technical problem with infinite dimensionality, the pair $(EG \times V, D)$ defines a basic $\infty$-local system on $EG$ in the sense of \S \ref{sec:3.1}. 

The preceding discussion allows us to define a DG category, which we call the \emph{DG category of infinitesimal $\infty$-local systems} on $\gfrak$, by the following data. The objects of this DG category are all basic $\gfrak$-$\L_{\infty}$ spaces. For any two $\gfrak$-$\L_{\infty}$ spaces $V$ and $V'$, with corresponding differentials $D$ and $D'$, the space of morphisms is the graded vector space $(\W\gfrak \otimes \Hom(V,V'))_{\bas}$ with the differential $\partial_{D,D'}$ acting according to the formula
$$
\partial_{D,D'}\varphi = D' \circ \varphi - (-1)^k \varphi \circ D,
$$
for any homogeneous element $\varphi$ of degree $k$. The DG category given by this data will be denoted by $\InfLoc_{\infty}(\gfrak)$. 

We are now in position to state and prove the main result of this section. 

\begin{theoremA}\label{thm:3.6}
Let $G$ be a Lie group and let $\pi \colon P \to X$ be a principal bundle with structure group $G$. Then, for any connection $\theta$ on $P$, there is a natural DG functor 
$$
\CW_{\theta} \colon \InfLoc_{\infty}(\gfrak) \longrightarrow \Loc_{\infty}(X).
$$
Moreover, for any two connections $\theta$ and $\theta'$ on $P$, there is an $A_{\infty}$-natural isomorphism between $\CW_{\theta}$ and $\CW_{\theta'}$. 
\end{theoremA}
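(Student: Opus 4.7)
The plan is to use the characteristic homomorphism $c_\theta \colon \W\gfrak \to \Omega^\sbullet(P)$ associated to $\theta$ to transport the Maurer-Cartan data of a basic $\gfrak$-$\L_\infty$ space to a flat basic superconnection on $P$, and then descend to $X$ via Proposition~\ref{prop:3.4}. For the second part, we interpolate between two connections on $P \times [0,1]$ and apply the homotopy invariance of $\Loc_\infty$ (Proposition~\ref{prop:2.3}).

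\textbf{Construction of $\CW_\theta$ on objects.} Let $V$ be a basic $\gfrak$-$\L_\infty$ space, encoded by a Maurer--Cartan element $\alpha_V \in \W\gfrak \otimes \End(V)$. Because $c_\theta$ is a morphism of DG algebras, $\alpha := (c_\theta \otimes \id)(\alpha_V) \in \Omega^\sbullet(P,\End(V))$ is again Maurer--Cartan, so $D = d + \alpha$ is a flat superconnection on the trivial bundle $P \times V$. The key point is that $D$ is basic in the sense of \S\ref{sec:3.1}: since $c_\theta$ intertwines the Cartan operators on $\W\gfrak$ and on $\Omega^\sbullet(P)$, the basic conditions on $\alpha_V$ translate directly into $[D, i^{E}_x] = L^{E}_x$ and $[D, L^{E}_x] = 0$. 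By Proposition~\ref{prop:3.3} there is a unique $(\bar E, \bar D) \in \Loc_\infty(X)$ with $\pi^*(\bar E, \bar D) \cong (P \times V, D)$, and we set $\CW_\theta(V) = (\bar E, \bar D)$.

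\textbf{Construction on morphisms and functoriality.} For $\varphi \in (\W\gfrak \otimes \Hom(V,V'))_\bas$, the element $(c_\theta \otimes \id)(\varphi)$ lies in $\Omega^\sbullet_\bas(P, \Hom(V,V'))$ by equivariance of $c_\theta$, and Proposition~\ref{prop:3.1} together with the isomorphism of Proposition~\ref{prop:3.4} identifies this space with $\Hom_{\Loc_\infty(X)}(\CW_\theta(V), \CW_\theta(V'))$. We define $\CW_\theta(\varphi)$ as this image. That $\CW_\theta$ is a morphism of cochain complexes reduces to checking $\partial_{\bar D, \bar D'} \circ (c_\theta \otimes \id) = (c_\theta \otimes \id) \circ \partial_{D_V, D_{V'}}$, which is immediate from the fact that $c_\theta$ intertwines the Weil differential with $d_P$ and respects multiplication by $\alpha_V$, $\alpha_{V'}$. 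Preservation of composition and of identities is a direct consequence of $c_\theta$ being an algebra homomorphism. Naturality with respect to bundle maps will be automatic from the naturality of the characteristic homomorphism.

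\textbf{$\A_\infty$-naturality in $\theta$.} Given connections $\theta, \theta'$ on $P$, let $\tilde\pi \colon P \times [0,1] \to X \times [0,1]$ be the product bundle and consider the connection $\tilde\theta = (1-t)\,\mathrm{pr}_P^*\theta + t\,\mathrm{pr}_P^*\theta'$, where $t$ is the coordinate on $[0,1]$; convexity of the space of connections makes this well defined. Applying the construction above yields a DG functor $\CW_{\tilde\theta} \colon \InfLoc_\infty(\gfrak) \to \Loc_\infty(X \times [0,1])$ with the property that $\iota_0^* \circ \CW_{\tilde\theta} = \CW_\theta$ and $\iota_1^* \circ \CW_{\tilde\theta} = \CW_{\theta'}$, since pulling back $\tilde\theta$ along $\iota_s$ recovers $\theta$ for $s=0$ and $\theta'$ for $s=1$ and the characteristic homomorphism is natural in connections. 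Proposition~\ref{prop:2.3} supplies an $\A_\infty$-natural isomorphism $\lambda \colon \iota_0^* \Rightarrow \iota_1^*$, and whiskering on the right by the DG functor $\CW_{\tilde\theta}$ produces an $\A_\infty$-natural isomorphism $\CW_\theta \Rightarrow \CW_{\theta'}$.

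\textbf{Main obstacle.} The conceptual content is modest, but the technical heart is verifying that the image superconnection is basic; this requires carefully matching the five compatibility relations defining a basic $\gfrak$-$\L_\infty$ space with the analogous relations on $\Omega^\sbullet(P, \End(V))$ under $c_\theta \otimes \id$. Once this is in place, descent via $\pi^\#$ and the homotopy invariance argument are routine, but keeping the higher Koszul signs consistent throughout the Maurer--Cartan/basic computation is the one place where care is needed.
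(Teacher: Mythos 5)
Your proposal is correct and follows essentially the same route as the paper: push the Maurer--Cartan element $\alpha_V$ through $c_\theta \otimes 1$ to get a flat basic superconnection on $P \times V$, pass to $\Loc_\infty(X)$ via the basic-subcategory equivalence of Propositions~\ref{prop:3.3} and \ref{prop:3.4}, and for the second part interpolate the connections over $P \times [0,1]$ and invoke Proposition~\ref{prop:2.3}. The only cosmetic difference is that the paper keeps the functor valued in $[\Loc_{\infty}(P)]_{\bas}$ and descends at the end, while you descend object by object; both are fine.
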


\begin{proof}
In view of Proposition~\ref{prop:3.4}, the pullback DG functor $\pi^* \colon \Loc_{\infty}(X) \to [\Loc_{\infty}(P)]_{\bas}$ is a quasi-equivalence. Hence, it will suffice to show that for any connection $\theta$ on $P$, there is a natural DG functor 
$$
\overline{\CW}_{\theta} \colon \InfLoc_{\infty}(\gfrak) \longrightarrow [\Loc_{\infty}(P)]_{\bas}.
$$
Let us thus fix a connection $\theta$ on $P$ and keep the notation as above. We define the DG functor $\overline{\CW}_{\theta} \colon \InfLoc_{\infty}(\gfrak) \to [\Loc_{\infty}(P)]_{\bas}$ as follows. For each object $V$ in $\InfLoc_{\infty}(\gfrak)$ consider the operator $D_{\theta} \colon \Omega^{\sbullet}(P,V) \to \Omega^{\sbullet}(P,V)$ determined by the formula
$$
D_{\theta} = d_P \otimes 1 + 1 \otimes \dV + (c_{\theta} \otimes 1) \alpha_V,
$$
where as usual $c_{\theta} \colon \W\gfrak \to \Omega^{\sbullet}(P)$ is the characteristic homomorphism for the connection $\theta$. Here we note that the fact that $c_{\theta}$ is a $\gfrak$-DG algebra homomorphism implies that $(c_{\theta} \otimes 1) \alpha_V$ is a Maurer-Cartan element of $\Omega^{\sbullet}_{\bas}(P,\End(V))$. Then the argument given in the proof of Lemma\ref{lem:3.5} can be repeated to show that $D_{\theta}$ defines a flat basic superconnection on $P \times V$. That being the case, we define $\overline{\CW}_{\theta}(V)$ to be the basic $\infty$-local system $(P \times V , D_{\theta})$. On the other hand, suppose $\varphi \in (\W\gfrak \otimes \Hom(V,V'))_{\bas}$ is an arbitrary morphism between two objects $V$ and $V'$ in $\InfLoc_{\infty}(\gfrak)$. Then, by the foregoing remark, we again have that $(c_{\theta} \otimes 1) \varphi \in \Omega^{\sbullet}_{\bas}(P,\Hom(V,V'))$. In this way we get a morphism of graded vector spaces
$$
\overline{\CW}_{\theta} \colon (\W\gfrak \otimes \Hom(V,V'))_{\bas} \longrightarrow  \Omega^{\sbullet}_{\bas}(P,\Hom(V,V')). 
$$
We may note further that, since $c_{\theta}$ commutes with the differentials $\dW$ and $d_P$, this morphism is a cochain map with respect to both differentials $\partial_{D,D'}$ and $\partial_{D_{\theta},D'_{\theta}}$. It is a straightforward exercise to check that this data defines a DG functor $\overline{\CW}_{\theta} \colon \InfLoc_{\infty}(\gfrak) \to [\Loc_{\infty}(P)]_{\bas}$. 

To show the second part, suppose that $\theta$ and $\theta'$ are two different connections on $P$. Define an interpolating connection $\theta_t$ by 
$$
\theta_t = \theta + t(\theta - \theta')
$$
so that $\theta_0 = \theta$ and $\theta_1 = \theta'$. Then, if we let $\pr_1 \colon P \times [0,1] \to P$ be the projection onto the first factor, we have that $\hat{\theta}=\pr_1^* \theta_t$ defines a connection on $P \times [0,1]$. Hence, by the foregoing, we get a DG functor $\overline{\CW}_{\hat{\theta}} \colon \InfLoc_{\infty}(\gfrak) \to [\Loc_{\infty}(P \times [0,1])]_{\bas}$. If, on the other hand, $\iota_t \colon P \to P \times [0,1]$ denotes the inclusion of height $t$, then the pullback DG functor $\iota_t^* \colon \Loc_{\infty}(P \times [0,1]) \to \Loc_{\infty}(P)$ induces DG functors between the DG subcategories $[\Loc_{\infty}(P \times [0,1])]_{\bas}$ and $[\Loc_{\infty}(P)]_{\bas}$, which we denote by the same symbol, and further we have that $\overline{\CW}_{\theta} = \iota_0^* \circ \overline{\CW}_{\hat{\theta}}$ and $\overline{\CW}_{\theta'} = \iota_1^* \circ \overline{\CW}_{\hat{\theta}}$. In addition, by virtue of Proposition~\ref{prop:2.3}, there exists an $\A_{\infty}$-natural isomorphism between $\iota_0^*$ and $\iota_1^*$. By restricting the latter to the full DG subcategory of $[\Loc_{\infty}(P \times [0,1])]_{\bas}$ consisting of objects of the form $\overline{\CW}_{\hat{\theta}}(V)$ with $V$ an object of $\InfLoc_{\infty}(\gfrak)$, we obtain an $\A_{\infty}$-natural isomorphism between $\overline{\CW}_{\theta}, \overline{\CW}_{\theta'} \colon \InfLoc_{\infty}(\gfrak) \to [\Loc_{\infty}(P)]_{\bas}$, as wished. 
\end{proof}

We shall henceforth refer to the DG functor $\CW_{\theta} \colon \InfLoc_{\infty}(\gfrak) \to \Loc_{\infty}(X)$ induced by a connection $\theta$ on $P$ as the \emph{Chern-Weil DG functor of $P$}.

\subsection{An example:~the Gauss-Manin $\infty$-local system}
In the following we conserve the notations of \S\ref{sec:2.5}. Let $\gfrak$ be a Lie algebra and consider the DG Lie algebra $\W\gfrak \otimes \End(\CE(\gfrak))$. Fix a basis $e_a$ of $\gfrak$ with structure constants $f^{a}_{\phantom{a}bc}$ and recall that $t^{a}$ stands for the degree $1$ generators of $\Lambda^1 \gfrak^*$ and $w^{a}$ stands for the degree $2$ generators of $\uS^1\gfrak^*$. 

Our starting point is the following observation. 

\begin{lemma} \label{lem:3.7}
The element $\alpha_{\CE} = t^{a} \otimes L_{a} - w^{a} \otimes i_{a}$ is a Maurer-Cartan element of $\W\gfrak \otimes \End (\CE(\gfrak))$.  
\end{lemma}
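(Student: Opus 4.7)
The lemma asserts that $\dW \alpha_{\CE} + \tfrac{1}{2}[\alpha_{\CE}, \alpha_{\CE}] = 0$ in the DG Lie algebra $\W\gfrak \otimes \End(\CE(\gfrak))$. First I would verify that $\alpha_{\CE}$ has total degree $1$: $t^{a}$ has degree $1$ and $L_{a}$ has degree $0$, while $w^{a}$ has degree $2$ and $i_{a}$ has degree $-1$, so both summands sit in degree $1$, as required for the Maurer--Cartan equation to make sense. The cleanest conceptual proof is to invoke the identification $\W\gfrak \cong \CE(\TT\gfrak)$ observed at the end of \S\ref{sec:2.5} together with Proposition~\ref{prop:2.1}: Maurer--Cartan elements of $\W\gfrak \otimes \End(\CE(\gfrak))$ correspond bijectively to $\L_\infty$-morphisms $\TT\gfrak \to \End(\CE(\gfrak))$. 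Since $\CE(\gfrak)$ is a genuine $\gfrak$-DG space, the assignment $L(e_a) \mapsto L_a$, $i(e_a) \mapsto i_a$ is a \emph{strict} DG Lie algebra morphism and thus a particular $\L_\infty$-morphism; unravelling the chain of isomorphisms preceding Proposition~\ref{prop:2.1} shows it corresponds precisely to $t^{a} \otimes L_{a} - w^{a} \otimes i_{a}$, which would conclude the proof.

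For a direct coordinate verification, I would compute each summand separately. Using the formulas of \S\ref{sec:2.5} one finds
\[
d(t^{a} \otimes L_{a}) = \left(w^{a} - \tfrac{1}{2} f^{a}_{\phantom{a}bc} t^{b} t^{c}\right) \otimes L_{a}, \qquad d(w^{a} \otimes i_{a}) = f^{a}_{\phantom{a}bc} w^{b} t^{c} \otimes i_{a} + w^{a} \otimes L_{a},
\]
where $d_{\End}(L_a) = [\dCE, L_a] = 0$ (the Lie derivative commutes with $\dCE$) and $d_{\End}(i_a) = \dCE i_a + i_a \dCE = L_a$ (the Cartan magic formula, which is exactly the relation $[\dCE, i_x] = L_x$ dualized). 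Subtracting, the two $w^{a}\otimes L_{a}$ contributions cancel and one is left with
\[
\dW \alpha_{\CE} = -\tfrac{1}{2} f^{a}_{\phantom{a}bc} t^{b} t^{c} \otimes L_{a} - f^{a}_{\phantom{a}bc} w^{b} t^{c} \otimes i_{a}.
\]

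Next, applying the tensor-product bracket formula $[a \otimes x, b \otimes y] = (-1)^{\vert x \vert \vert b \vert} ab \otimes [x,y]$ to the four pairings in $[\alpha_{\CE}, \alpha_{\CE}]$, and invoking the Cartan relations $[L_a, L_b] = f^{c}_{\phantom{c}ab} L_c$, $[L_a, i_b] = f^{c}_{\phantom{c}ab} i_c$, $[i_a, i_b] = 0$, one finds after using graded commutativity of $\W\gfrak$ (so that $t^{a} w^{b} = w^{b} t^{a}$) and the antisymmetry $f^{c}_{\phantom{c}ab} = -f^{c}_{\phantom{c}ba}$ to symmetrize the two mixed terms:
\[
\tfrac{1}{2}[\alpha_{\CE}, \alpha_{\CE}] = \tfrac{1}{2} f^{c}_{\phantom{c}ab}\, t^{a} t^{b} \otimes L_{c} + f^{c}_{\phantom{c}ab}\, w^{a} t^{b} \otimes i_{c}.
\]
Relabelling dummy indices, the right-hand side is the negative of $\dW \alpha_{\CE}$, and cancellation gives the Maurer--Cartan identity.

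The main obstacle is the sign bookkeeping in the bracket computation, in particular in the two cross terms $[t^{a} \otimes L_{a},\, -w^{b} \otimes i_{b}]$ and $[-w^{a} \otimes i_{a},\, t^{b} \otimes L_{b}]$, where one has to juggle Koszul signs coming from the degrees of $w^{b}$ and $i_{a}$ before consolidating via graded commutativity in $\W\gfrak$. This is precisely the place where the quadratic part of $\dW$ (which encodes the Lie bracket of $\gfrak$) is traded against the commutator bracket on $\End(\CE(\gfrak))$ via the Cartan relations — which is exactly what the conceptual argument above predicts.
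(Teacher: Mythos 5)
Your direct computation is correct and is essentially the paper's own proof: both split $\dW\alpha_{\CE}$ into the $\dW\otimes 1$ and $1\otimes\dCE$ pieces (with the two $w^a\otimes L_a$ terms cancelling via the Cartan magic formula $[\dCE,i_a]=L_a$), and both reduce $\tfrac{1}{2}[\alpha_{\CE},\alpha_{\CE}]$ to $\tfrac{1}{2}f^{a}_{\phantom{a}bc}t^bt^c\otimes L_a + f^{a}_{\phantom{a}bc}w^bt^c\otimes i_a$ using the Cartan relations, graded commutativity $t^bw^c=w^ct^b$, and antisymmetry of the structure constants. Your preliminary ``conceptual'' argument via $\W\gfrak\cong\CE(\TT\gfrak)$ and Proposition~\ref{prop:2.1} is a legitimate alternative viewpoint the paper does not spell out, but as written it defers exactly the nontrivial point --- that unwinding the (de)suspension and dualization isomorphisms produces the element $t^a\otimes L_a - w^a\otimes i_a$ with that particular minus sign --- so on its own it would not constitute a complete proof; the coordinate verification you supply is what actually closes the argument.
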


\begin{proof}
On the one hand,
\begin{align*}
(\dW \otimes 1) \alpha_{\CE} &= (\dW \otimes 1) (t^{a} \otimes L_{a} - w^{a} \otimes i_{a} )\\
 &=  \dW t^{a} \otimes L_a - \dW w^{a} \otimes i_a \\
&=   w^{a} \otimes L_a - \frac{1}{2} f^{a}_{\phantom{a}bc} t^{b} t^{c} \otimes L_a - f^{a}_{\phantom{a}bc} w^{b} t^{c} \otimes i_a,
\end{align*}
and
\begin{align*}
(1 \otimes \dCE) \alpha_{\CE} &= (1 \otimes \dCE) ( t^{a} \otimes L_{a} - w^{a} \otimes i_{a} ) \\
&=- t^{a} \otimes [\dCE,L_a] - w^{a} \otimes [\dCE,i_a]  \\
&=- w^{a} \otimes L_a .
\end{align*}
Hence,
$$
(\dW \otimes 1 + 1 \otimes \dCE) \alpha_{\CE} =  -f^{a}_{\phantom{a}bc} w^{b} t^{c} \otimes i_a - \frac{1}{2} f^{a}_{\phantom{a}bc}t^{b} t^{c} \otimes L_a
$$
On the other hand, 
\begin{align*}
[\alpha_{\CE},\alpha_{\CE}] &= [t^{b} \otimes L_{b} -w^{b} \otimes i_{b}  ,  t^{c} \otimes L_{c}- w^{c} \otimes i_{c} ]  \\
&=   t^{b} t^{c} \otimes [L_b,L_c] - t^b w^c \otimes [L_b,i_c] + w^b t^c \otimes [i_b,L_c] + w^{b} w^{c} \otimes [i_b,i_c]   \\
&= f_{\phantom{a}bc}^{a} t^b t^c \otimes L_a - f_{\phantom{a}bc}^{a} t^b w^c \otimes i_a - f_{\phantom{a}cb}^{a} w^b t^c \otimes i_a  \\
&= f_{\phantom{a}bc}^{a} t^b t^c \otimes L_a + 2 f_{\phantom{a}bc}^{a} w^b t^c \otimes i_a.
\end{align*}
In conclusion, we obtain
$$
(\dW \otimes 1 + 1 \otimes \dCE) \alpha_{\CE} + \frac{1}{2}[\alpha_{\CE},\alpha_{\CE}] = 0 ,
$$
as required. 
\end{proof}

By the discussion of the preceding section, the Maurer-Cartan element $\alpha_{\CE}$ endows the Chevalley-Eilenberg algebra $\CE(\gfrak)$ with the structure of a $\gfrak$-$\L_{\infty}$ space. The next lemma deals with the basicness of such  $\gfrak$-$\L_{\infty}$ space.

\begin{lemma}
The $\gfrak$-$\L_{\infty}$ space $\CE(\gfrak)$ is basic. 
\end{lemma}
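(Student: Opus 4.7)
The goal is to verify the two defining identities
\begin{align*}
[\alpha_{\CE}, i_c \otimes 1] &= 1 \otimes L_c, \\
[\alpha_{\CE}, L_c \otimes 1 + 1 \otimes L_c] &= 0,
\end{align*}
for every basis element $e_c$, where on the left of each tensor the operators $i_c, L_c$ act on $\W\gfrak$ and on the right they act on $\CE(\gfrak)$. Both are direct computations using the explicit formulas for $\dW$, $i_a$, $L_a$ on $\W\gfrak$ and for $\dCE$, $i_a$, $L_a$ on $\CE(\gfrak)$ recorded in \S\ref{sec:2.5}.

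For the first identity, I would expand the bracket as $[t^a \otimes L_a, i_c \otimes 1] - [w^a \otimes i_a, i_c \otimes 1]$, apply $i_c$ as a derivation on $\W\gfrak$, and use $i_c t^a = \delta_c^{\phantom{c}a}$ together with $i_c w^a = 0$. All terms in which $i_c$ ``passes through'' $t^a$ or $w^a$ cancel between $\alpha_{\CE} \circ (i_c \otimes 1)$ and $(i_c \otimes 1) \circ \alpha_{\CE}$ once the Koszul sign generated by the degree $-1$ operator $i_a$ is taken into account; the only surviving contribution is $\delta_c^{\phantom{c}a} \otimes L_a = 1 \otimes L_c$, which is exactly what is needed.

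For the second identity, I would split the bracket into the four pieces $[t^a \otimes L_a, L_c \otimes 1]$, $[t^a \otimes L_a, 1 \otimes L_c]$, $[w^a \otimes i_a, L_c \otimes 1]$ and $[w^a \otimes i_a, 1 \otimes L_c]$ and group them in pairs. The first piece reduces via $L_c t^a = -f^{a}_{\phantom{a}cb} t^b$ and the Leibniz rule to $f^{a}_{\phantom{a}cb} t^b \otimes L_a$; the second piece, via the Cartan relation $[L_a, L_c] = L_{[e_a, e_c]} = f^{b}_{\phantom{a}ac} L_b$ on $\CE(\gfrak)$, yields $f^{b}_{\phantom{a}ac} t^a \otimes L_b$, and after a relabeling of summation indices and use of the antisymmetry $f^{a}_{\phantom{a}cb} = -f^{a}_{\phantom{a}bc}$ these two contributions cancel. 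A parallel cancellation, using $L_c w^a = -f^{a}_{\phantom{a}cb} w^b$ and the Cartan relation $[i_a, L_c] = -i_{[e_c, e_a]} = f^{b}_{\phantom{a}ac} i_b$ on $\CE(\gfrak)$, handles the two pieces involving $w^a \otimes i_a$.

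The main obstacle is purely bookkeeping rather than anything conceptual: one must carefully track the Koszul signs that arise when the degree $-1$ operators $i_a$ are moved past homogeneous elements of $\W\gfrak$, and handle the indices of the structure constants consistently using their antisymmetry in the lower pair. Once this is done, each identity reduces to a single instance of a Cartan relation built into the definition of $\TT\gfrak$, applied through the strict $\TT\gfrak$-module structure on $\CE(\gfrak)$. In fact the entire statement can be repackaged as saying that $\alpha_{\CE}$ is the Maurer-Cartan element in $\CE(\TT\gfrak) \otimes \End(\CE(\gfrak))$ corresponding via Proposition~\ref{prop:2.1} to the canonical strict representation $\TT\gfrak \to \End(\CE(\gfrak))$, and the basic conditions are precisely the Cartan relations satisfied by that representation.
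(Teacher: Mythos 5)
Your proposal is correct and follows essentially the same route as the paper: a direct verification of the two identities $[\alpha_{\CE}, i_c \otimes 1] = 1 \otimes L_c$ and $[\alpha_{\CE}, L_c \otimes 1 + 1 \otimes L_c] = 0$ using the explicit formulas for $i_c$, $L_c$ on the generators $t^a$, $w^a$ of $\W\gfrak$ and the Cartan relations on $\CE(\gfrak)$, with the cancellations driven by the antisymmetry of the structure constants exactly as in the paper's computation. The closing observation that the basic conditions repackage the strictness of the canonical $\TT\gfrak$-representation on $\CE(\gfrak)$ is a valid and helpful conceptual gloss, though the paper does not state it.
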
  

\begin{proof}
It suffices to show that 
$$
[\alpha_{\CE}, i_{c} \otimes 1] = 1 \otimes L_{c},
$$
and 
$$
[\alpha_{\CE},L_c \otimes 1 + 1 \otimes L_{c}] = 0.
$$
So fix an element of $\W \gfrak \otimes \CE(\gfrak)$ of the form $\varphi \otimes \xi$. A simple calculation shows that
\begin{align*}
\alpha_{\CE}((i_c \otimes 1) (\varphi \otimes \xi)) = (t^{a} i_{c}\varphi) \otimes L_{a} \xi + (-1)^{\lvert \varphi\rvert} (w^{a} i_{c}\varphi) \otimes i_{a} \xi,
\end{align*}
and
\begin{align*}
(i_c \otimes 1) (\alpha_{\CE}(\varphi \otimes \xi)) = \delta^{a}_{c}\varphi \otimes L_{a}\xi - (t^{a}i_{c} \varphi) \otimes L_{a}\xi - (-1)^{\lvert \varphi\rvert} (w^{a} i_{c}\varphi) \otimes i_{a} \xi.
\end{align*}
Thus,
\begin{align*}
[\alpha_{\CE}, i_{c} \otimes 1] (\varphi \otimes \xi) = \delta^{a}_{c}\varphi \otimes L_{a}\xi = \varphi \otimes L_{c} \xi = (1 \otimes L_{c})(\varphi \otimes \xi).
\end{align*}
Hence the first identity holds. Furthermore, another straightforward calculation shows that
\begin{align*}
\alpha_{\CE}((L_c \otimes 1 + 1 \otimes L_c) (\varphi \otimes \xi)) &=  (t^{a} L_{c} \varphi) \otimes L_{a} \xi - (-1)^{\lvert \varphi\rvert} (w^{a} L_{c} \varphi) \otimes i_{a} \xi \\
&\quad  + (t^{a}\varphi) \otimes L_{a} (L_{c} \xi) - (-1)^{\lvert \varphi \rvert} (w^{a}\varphi) \otimes i_{a} (L_{c} \xi),
\end{align*}
and 
\begin{align*}
(L_c \otimes 1 + 1 \otimes L_c) (\alpha_{\CE} (\varphi \otimes \xi)) &= -(f^{a}_{\phantom{a}bc}t^{b} \varphi) \otimes L_{a} \xi + (t^{a} L_{c} \varphi) \otimes L_{a} \xi \\
&\quad + (-1)^{\lvert \varphi \rvert} (f^{a}_{\phantom{a}cb} w^{b} \varphi) \otimes i_{a} \xi - (-1)^{\lvert \varphi \rvert} (w^{a}L_{c}\varphi) \otimes i_{a} \xi \\
&\quad + (t^{a}\varphi) \otimes L_{c}(L_{a}\xi) - (-1)^{\lvert \varphi \rvert}(w^{a} \varphi) \otimes L_{c} (i_{a} \xi).
 \end{align*}
 Consequently, 
 \begin{align*}
 [\alpha_{\CE}, L_{c} \otimes 1 + 1 \otimes L_{c}] &= (f^{a}_{\phantom{a}cb}t^{b} \varphi) \otimes L_{a} \xi - (-1)^{\lvert \varphi \rvert} (f^{a}_{\phantom{a}cb}w^{b} \varphi) \otimes i_{a} \xi \\
 &\quad + (t^{a} \varphi) \otimes [L_a,L_c] \xi + (-1)^{\lvert \varphi \rvert} (w^{a}\varphi) \otimes [L_{c},i_{a}]\xi \\
 &= (f^{a}_{\phantom{a}cb}t^{b} \varphi) \otimes L_{a} \xi - (-1)^{\lvert \varphi \rvert} (f^{a}_{\phantom{a}cb}w^{b} \varphi) \otimes i_{a} \xi \\
 &\quad + (f^{b}_{\phantom{b}ac}t^{a} \varphi) \otimes L_{b} \xi + (-1)^{\lvert \varphi \rvert} (f^{b}_{\phantom{b}ca}w^{a} \varphi) \otimes i_{b} \xi \\
 &= 0,
 \end{align*}
 implying that the second identity also holds. 
 \end{proof}
 
 In view of this result, the $\gfrak$-$\L_{\infty}$ space $\CE(\gfrak)$ defines an object of the DG category $\InfLoc_{\infty}(\gfrak)$. The corresponding differential takes the form
 $$
 D = \dW \otimes 1 + 1 \otimes \dCE + \alpha_{\CE} = \dW \otimes 1 + 1 \otimes \dCE + t^{a} \otimes L_{a} - w^{a} \otimes i_{a}.
 $$
As pointed out in the previous section, in the special case in which $\gfrak$ is the Lie algebra of a connected Lie group, $D$ can be thought of as a flat superconnection on the graded vector bundle $EG \times \CE(\gfrak)$. Thus, ignoring the issue of infinite dimensionality, the pair $(EG \times \CE(\gfrak),D)$ defines an $\infty$-local system on $EG$, which we will call the \emph{universal Weil $\infty$-local system}. 

Let now $\pi \colon P \to X$ be a principal bundle with structure group $G$ and consider the coadjoint action $\ad^*$ of $G$ on $\gfrak^*$. This action extends uniquely to a representation of $G$ on $\CE(\gfrak)$, turning $P \times \CE(\gfrak)$ into a graded $G$-equivariant vector bundle on $P$. Choose a connection $\theta$ on $P$ with curvature $\Omega$ and write $\theta = \theta^{a} \otimes e_{a}$ and $\Omega = \Omega^{a} \otimes e_{a}$. By following the construction presented in the proof of Theorem~A, the operator $D_{\theta} \colon \Omega^{\sbullet}(P,\CE(\gfrak)) \to \Omega^{\sbullet}(P,\CE(\gfrak))$ given by
$$
D_{\theta} = d_P \otimes 1 + 1 \otimes \dCE + (c_{\theta} \otimes 1) \alpha_{\CE} = d_P \otimes 1 + 1 \otimes \delta_{CE} + \theta^{a} \otimes L_{a} - \Omega^{a} \otimes i_{a},
$$
defines a flat basic superconnection on $P \times \CE(\gfrak)$. In other words, we obtain a basic $\infty$-local system $(P \times \CE(\gfrak),D_{\theta})$, which is nothing but the image $\overline{\CW}_{\theta}(\CE(\gfrak))$ through the DG functor $\overline{\CW}_{\theta} \colon \InfLoc_{\infty}(\gfrak) \to [\Loc_{\infty}(P)]_{\bas}$ of $\CE(\gfrak)$. 

Next we consider the graded vector bundle $\CE(P) = P \times_{\ad^*} \CE(\gfrak)$ associated with $P$ and the coadjoint action $\ad^*$ of $G$ on $\CE(\gfrak)$. On account of Proposition~\ref{prop:3.3}, the flat basic superconnection $D_{\theta}$ defined above induces a flat superconnection $\bar{D}_{\theta}$ on $\CE(P)$. Thus, the pair $(\CE(P),\bar{D}_{\theta})$ constitutes an $\infty$-local system on the base manifold $X$, which again by the argument given in the proof of Theorem~A, is exactly the image of the image $\CW_{\theta}(\CE(\gfrak))$ under the Chern-Weil DG functor $\CW_{\theta} \colon \InfLoc_{\infty}(\gfrak) \to \Loc_{\infty}(X)$ of $\CE(\gfrak)$. We call it the \emph{Gauss-Manin $\infty$-local system for} $\theta$. 

We close with the following result.

\begin{proposition}
Let $G$ be a compact connected Lie group and let $\pi \colon P \to X$ be a principal bundle with structure group $G$. Then, for any connection $\theta$ on $P$, the cohomology of the corresponding Gauss-Manin $\infty$-local system $(\CE(P),\bar{D}_{\theta})$ is isomorphic to $\uH^{\sbullet}_{\DR}(P)$. 
\end{proposition}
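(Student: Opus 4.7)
The plan is to compute the cohomology of the Gauss--Manin complex by relating it, through the connection $\theta$, to the de Rham complex of $P$ itself. First, Proposition~\ref{prop:3.3} reduces the problem: the map $\pi^{\#}$ identifies $(\Omega^{\sbullet}(X,\CE(P)), \bar{D}_{\theta})$ with $(\Omega^{\sbullet}_{\bas}(P, \CE(\gfrak)), D_{\theta})$ as cochain complexes, so it will suffice to show that the latter has cohomology $\uH^{\sbullet}_{\DR}(P)$.

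Next, the connection will be used to build an isomorphism of graded vector spaces
$$
\Phi_{\theta} \colon \Omega^{\sbullet}(P)^{G} \xlongrightarrow{\cong} \Omega^{\sbullet}_{\bas}(P, \CE(\gfrak)).
$$
The splitting $TP = \ker(d\pi) \oplus \hor$ together with the trivialisation $\ker(d\pi) \cong P \times \gfrak$ by fundamental vector fields shows that every $\omega \in \Omega^{\sbullet}(P)$ has a unique expression $\omega = \sum_{I} \omega_{I} \wedge \theta^{I}$ with each $\omega_{I}$ horizontal, and one sets $\Phi_{\theta}(\omega) = \sum_{I} \omega_{I} \otimes e^{I}$. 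Because $\sigma_{g}^{*}\theta = \mathrm{Ad}_{g^{-1}}\theta$, this map is $G$-equivariant for the diagonal action (pullback on the first factor, coadjoint on the second); horizontality is automatic on the right-hand side, so the basic condition there corresponds exactly to $G$-invariance on the left.

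The technical heart of the proof is the verification of the chain map identity $\Phi_{\theta} \circ d_{P} = D_{\theta} \circ \Phi_{\theta}$. The key ingredients will be the structure equation $d_{P}\theta^{a} = \Omega^{a} - \tfrac{1}{2}f^{a}_{\phantom{a}bc}\theta^{b}\theta^{c}$ and, for a horizontal form $\eta$, the identity $d_{P}\eta = (d_{P}\eta)_{\hor} + \theta^{a}\wedge L_{a}\eta$ decomposing its exterior derivative into horizontal and vertical parts. Expanding $d_{P}(\sum_{I}\omega_{I}\wedge \theta^{I})$ by Leibniz, substituting these identities, and then re-sorting into horizontal-times-$\theta^{J}$ form, one finds that the resulting four families of summands match, one-for-one, the four pieces of $D_{\theta}$: the horizontal part of $d_{P}\omega_{I}$ corresponds to $d_{P}\otimes 1$, the $\Omega^{a}$-terms arising from $d_{P}\theta^{a}$ reproduce $-\Omega^{a}\otimes i_{a}$, the quadratic $\theta$-terms reproduce $1\otimes \delta_{\CE}$, and the vertical part of $d_{P}\omega_{I}$ combines with $\theta^{a}\otimes L_{a}$ through the total $G$-invariance characterising basic elements. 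The main obstacle is the careful tracking of Koszul signs in this computation.

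Finally, because $G$ is compact and connected, averaging over $G$ makes the inclusion $\Omega^{\sbullet}(P)^{G}\hookrightarrow \Omega^{\sbullet}(P)$ a quasi-isomorphism. Combining this with the three steps above yields
$$
\uH^{\sbullet}(\Omega^{\sbullet}(X,\CE(P)), \bar{D}_{\theta}) \cong \uH^{\sbullet}(\Omega^{\sbullet}(P)^{G}, d_{P}) \cong \uH^{\sbullet}_{\DR}(P),
$$
which is the desired isomorphism.
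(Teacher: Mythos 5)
Your proposal is correct and follows essentially the same route as the paper: both reduce, via the connection $\theta$, to a connection-induced isomorphism of cochain complexes between the basic subcomplex $(\Omega^{\sbullet}(P)\otimes\CE(\gfrak))_{\bas}$ and the invariant forms $\Omega^{\sbullet}(P)^{G}$, and then invoke compactness and connectedness of $G$ to identify the cohomology of the latter with $\uH^{\sbullet}_{\DR}(P)$. The only cosmetic differences are that the paper reaches $(\Omega^{\sbullet}(P)\otimes\CE(\gfrak))_{\bas}$ as the morphism complex $\Hom(\underline{\RR},(\CE(P),\bar{D}_{\theta}))$ rather than directly via $\pi^{\#}$, and that, like you, it leaves the chain-map verification for the connection-induced isomorphism implicit (where, as you anticipate, the assignment $\omega_I\wedge\theta^I\mapsto\omega_I\otimes e^I$ does require some sign adjustments to intertwine $d_P$ with $D_{\theta}$).
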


\begin{proof}
Let us denote by $\Omega^{\sbullet}(P)^{G}$ the subspace of $G$-invariant elements of $\Omega^{\sbullet}(P)$, and write $\underline{\RR}$ for the constant $\infty$-local system $(X \times \RR,d)$. We will prove a stronger result, namely that the space of morphisms $\Hom(\underline{\RR}, (\CE(P),\bar{D}_{\theta}))$ is isomorphic as a cochain complex to $\Omega^{\sbullet}(P)^{G}$. That this implies our desired result is a consequence of the fact that the cohomology of $(\CE(P),\bar{D}_{\theta})$ isomorphic to that to that of $\Hom(\underline{\RR}, (\CE(P),\bar{D}_{\theta}))$, and that, since $G$ is compact and connected, the inclusion $\Omega^{\sbullet}(P)^{G} \to \Omega^{\sbullet}(P)$ is a quasi-isomorphism. By the preceding remark, we know that $(\CE(P),\bar{D}_{\theta})$ coincides with the image of $\CE(\gfrak)$ under the Chern-Weil DG  functor $\CW_{\theta} \colon \InfLoc_{\infty}(\gfrak) \to \Loc_{\infty}(X)$. Thus, by the construction of the latter, $\Hom(\underline{\RR}, (\CE(P),\bar{D}_{\theta}))$ is isomorphic to $(\Omega^{\sbullet}(P) \otimes \CE(\gfrak))_{\bas}$. Thus, it will be enough to show that $(\Omega^{\sbullet}(P) \otimes \CE(\gfrak))_{\bas}$ is isomorphic to $\Omega^{\sbullet}(P)^{G}$. For this, let us consider the subspace $(\Omega^{\sbullet}(P) \otimes \CE(\gfrak))_{\hor}$ consisting of all the elements of $\Omega^{\sbullet}(P) \otimes \CE(\gfrak)$ that are sent to zero by the contraction $i_x \otimes 1$ for all $x \in \gfrak$. Then the connection $\theta$ induces an isomorphism of cochain complexes between $(\Omega^{\sbullet}(P) \otimes \CE(\gfrak))_{\hor}$ and $\Omega^{\sbullet}(P)$. The result we are after follows immediately from the fact that this isomorphism is $G$-equivariant. 
\end{proof}


\section{$\infty$-Local systems and classifying spaces}
In this section we prove  our second main result, Theorem B. This states that if $G$ is a compact connected Lie group with Lie algebra $\gfrak$, then,  the DG category  $\InfLoc_{\infty}(\gfrak)$ is $\A_\infty$-quasi-equivalent to the DG category $\Loc_{\infty}(BG)$ of $\infty$-local systems on the classifying space of $G$. Moreover, given a principal bundle $\pi \colon P \to X$ with structure group $G$ and any connection $\theta$ on $P$, the Chern-Weil functor  $\CW_{\theta} \colon \InfLoc_{\infty}(\gfrak) \to \Loc_{\infty}(X)$ corresponds to the pullback functor associated to the classifying map of $P$.

\subsection{Canonical connections on Stiefel bundles}\label{sec:4.1}
Here we recall the existence of canonical connections on Stiefel bundles which will be needed in the text below. Although the results are well-known, proofs will be given for completeness sake. 

For nonnegative integers $k \leq n$, let $\V_k(\CC^n)$ denote the Stiefel manifold which parametrises orthonormal $k$-frames in $\CC^k$. It can be thought of as a set of $n \times k$ matrices by writing a $k$-frame as a matrix of $k$ column vectors in $\CC^n$.  The orthonormality condition is expressed by $A^* A = I_k$ where $A^*$ denotes the conjugate transpose of $A$ and $I_k$ denotes the $k \times k$ identity matrix. The topology on $\V_k(\CC^n)$ is the subspace topology inherited from $\CC^{n \times k}$. With this topology $\V_k(\CC^n)$ is a compact manifold whose dimension is $(2n-k)k$. There is a natural projection $\pi \colon \V_k(\CC^n) \to \G_k(\CC^n)$ from the Stiefel manifold $\V_k(\CC^n)$ to the Grassmannian of $k$-planes in $\CC^n$ which sends a $k$-frame to the subspace spanned by that frame. The fiber over a given point $W$ in $\G_k(\CC^n)$ is the set of all orthonormal $k$-frames contained in the space $W$. There is also a natural right action of $\mathrm{U}(k)$ on $\V_k(\CC^n)$ which rotates a $k$-frame in the space it spans. This action is free and the quotient is precisely $\G_k(\CC^n)$. It follows that $\pi \colon \V_k(\CC^n) \to \G_k(\CC^n)$ is indeed a principal bundle with structure group $\mathrm{U}(k)$.  

There are natural inclusions $i_n \colon \G_k(\CC^n) \to \G_k(\CC^{n+1})$ and $j_n\colon\V_k(\CC^{n}) \to \V_k(\CC^{n+1})$ induced by the standard inclusion $\CC^{n} \subset \CC^{n+1}$. Moreover, the latter is equivariant, so that we have a commutative diagram of inclusions and principal bundles with structure group $\mathrm{U}(k)$,
$$
\xymatrix{ \cdots \ar[r] & \V_k(\CC^{n-1}) \ar[r]^-{j_{n-1}} \ar[d]_-{\pi} & \V_k(\CC^{n}) \ar[r]^-{j_{n}} \ar[d]_-{\pi} & \V_k(\CC^{n+1}) \ar[r]  \ar[d]_-{\pi} & \cdots  \\ 
  \cdots \ar[r] & \G_k(\CC^{n-1}) \ar[r]^-{i_{n-1}} & \G_k(\CC^{n}) \ar[r]^-{i_{n}} & \G_k(\CC^{n+1}) \ar[r] & \cdots . }
$$
In the following we shall write simply $j$ to denote any of the inclusions in the upper line of the diagram.

\begin{proposition}\label{prop:4.1}
The principal bundles $\pi \colon \V_k(\CC^n) \to \G_k(\CC^n)$ admit a canonical connection $\omega_{\mathrm{c}}$ such that $j^* \omega_{\mathrm{c}} = \omega_{\mathrm{c}}$. 
\end{proposition}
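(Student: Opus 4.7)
The plan is to write down the canonical connection explicitly in matrix coordinates and then verify each required property by direct computation. Recall that $\V_k(\CC^n)$ is realised as the submanifold of $\CC^{n\times k}$ consisting of matrices $A$ with $A^*A = I_k$. By differentiating the defining equation, the tangent space $T_A \V_k(\CC^n)$ is identified with the real subspace
$$
\{ X \in \CC^{n\times k} \mid A^* X + X^* A = 0 \}.
$$
The right action of $\mathrm{U}(k)$ is given by $A \cdot g = Ag$, so its differential sends $X \in T_A \V_k(\CC^n)$ to $X g \in T_{Ag}\V_k(\CC^n)$, and the fundamental vector field associated to $\xi \in \mathfrak{u}(k)$ at the point $A$ is $A\xi$.

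Next I would define the candidate connection form
$$
\omega_{\mathrm{c}}|_A (X) = A^* X, \qquad X \in T_A \V_k(\CC^n),
$$
which takes values in $\mathfrak{u}(k)$ precisely because $A^*X + X^*A = 0$ is the defining condition of the tangent space. Verifying that $\omega_{\mathrm{c}}$ is a connection reduces to two straightforward checks: on the vertical vector $A\xi$ one computes $A^*(A\xi) = (A^*A)\xi = \xi$, so the fundamental vertical vectors are mapped to their generators; and for $g \in \mathrm{U}(k)$ one has
$$
(Ag)^*(Xg) = g^{-1} A^* X g = \ad_{g^{-1}}(\omega_{\mathrm{c}}|_A(X)),
$$
which is the required equivariance. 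This establishes that $\omega_{\mathrm{c}}$ is a bona fide principal connection on $\pi \colon \V_k(\CC^n) \to \G_k(\CC^n)$.

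Finally, I would verify the stability property $j^* \omega_{\mathrm{c}} = \omega_{\mathrm{c}}$. In matrix coordinates the inclusion $j \colon \V_k(\CC^n) \hookrightarrow \V_k(\CC^{n+1})$ sends $A$ to $\tilde A = \binom{A}{0}$, so its differential maps $X \in T_A \V_k(\CC^n)$ to $dj(X) = \binom{X}{0}$. Then
$$
\omega_{\mathrm{c}}|_{\tilde A}(dj(X)) = \tilde A^* \, dj(X) = \begin{pmatrix} A^* & 0 \end{pmatrix} \begin{pmatrix} X \\ 0 \end{pmatrix} = A^* X = \omega_{\mathrm{c}}|_A(X),
$$
which is exactly the asserted compatibility. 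I do not anticipate a serious obstacle here: the whole argument is a matter of unwinding definitions, and the only subtle point is to recognize that the natural expression $A^*X$ automatically lands in $\mathfrak{u}(k)$ because of the very equations cutting out the Stiefel manifold from $\CC^{n\times k}$.
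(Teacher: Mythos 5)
Your proof is correct and follows essentially the same route as the paper: realise the Stiefel manifold as matrices with $A^*A = I_k$, set $\omega_{\mathrm{c}}|_A(X) = A^*X$, and verify the two connection axioms and the compatibility with $j$ by direct matrix computation (your explicit block-matrix check of $j^*\omega_{\mathrm{c}} = \omega_{\mathrm{c}}$ is slightly more detailed than the paper's appeal to linearity of $j$). The only cosmetic remark is that the equivariance condition should be written with the group adjoint action $\mathrm{Ad}_{g^{-1}}$ rather than $\operatorname{ad}_{g^{-1}}$.
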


\begin{proof}
We first identify $\V_k(\CC^n)$ with $n \times k$ matrices $A$ satisfying $A^* A= I_k$. Then the tangent space of $\V_k(\CC^n)$ at $A$ identified with the space of all $n \times k$ matrices $M$  such that
$$
M^* A + A^* M = 0.
$$
For each point $A$ in $\V_k(\CC^n)$ and for each tangent vector $M$ of $\V_k(\CC^n)$ at $A$, we consider the $k \times k$ matrix-valued $1$-form $\omega_{\mathrm{c}}$ on $\V_k(\CC^n)$ defined as
$$
[\omega_{\mathrm{c}}(A)] (M) = A^* M.
$$
Because of the above condition, it is clear that $\omega_{\mathrm{c}}$ takes actually values in the Lie algebra $\mathfrak{u}(k)$ of $\mathrm{U}(k)$. We claim that $\omega_{\mathrm{c}}$ is a connection form. In fact, if $u \in \mathrm{U}(k)$ and $r_u$ denotes the right translation, then for each point $A$ in $\V_k(\CC^n)$ and for each tangent vector $M$ of $\V_k(\CC^n)$ at $A$,
$$
[r_u^* \omega_{\mathrm{c}}(A)](M)=[\omega_{\mathrm{c}}(Au)] (Mu) = u^* A^* M u = u^{-1} A^* M u = \mathrm{Ad}_{u^{-1}} [\omega_{\mathrm{c}}(A)] (M).
$$
On the other hand, if $x \in \mathfrak{u}(k)$ and $x^{\sharp}$ denotes the fundamental vector field generated by $x$, then for each point $A$ in $\V_k(\CC^n)$,
$$
[\omega_{\mathrm{c}}(A)] (x^{\sharp}(A)) = [\omega_{\mathrm{c}}(A)] \left( \frac{d}{dt}\bigg\vert_{t=0} A \exp(t x)\right) = A^* A x = x. 
$$
Finally, the condition $j^* \omega_{\mathrm{c}} = \omega_{\mathrm{c}}$ follows immediately from the fact that $j$ is linear. 
\end{proof}

Now recall that every compact Lie group is isomorphic to a compact subgroup of $\mathrm{U}(k)$. Therefore, given such a Lie group $G$, one can fix an embedding $G \subset \mathrm{U}(k)$ and restrict the action of $\mathrm{U}(k)$ on $\V_k(\CC^n)$ to $G$. The quotient of $\V_k(\CC^n)$ by this action is a set $BG_n$ which can be given the structure of a smooth manifold. Thus we get a new principal bundle $\pi \colon \V_k(\CC^n) \to BG_{n}$ with structure group $G$. 

The inclusions $\CC^n \subset \CC^{n+1} \subset \cdots$ give inclusions $BG_n \to BG_{n+1} \to \cdots$ and taking the direct limit we arrive at the classifying space $BG$. Furthermore, as before, we have a commutative diagram of inclusions and principal bundles with structure group $G$,
$$
\xymatrix{ \cdots \ar[r] & \V_k(\CC^{n-1}) \ar[r]^-{j_{n-1}} \ar[d]_-{\pi} & \V_k(\CC^{n}) \ar[r]^-{j_{n}} \ar[d]_-{\pi} & \V_k(\CC^{n+1}) \ar[r]  \ar[d]_-{\pi} & \cdots  \\ 
  \cdots \ar[r] & BG_{n-1} \ar[r]^-{i_{n-1}} & BG_{n} \ar[r]^-{i_{n}} & BG_{n+1} \ar[r] & \cdots . }
$$
Keeping the notation introduced above, we have the following result.

\begin{proposition}\label{prop:4.2}
The principal bundles $\pi \colon \V_k(\CC^n) \to BG_n$ admit a canonical connection $\omega_G$ such that $j^* \omega_G = \omega_G$. 
\end{proposition}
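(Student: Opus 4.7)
The plan is to build $\omega_G$ from the already-constructed $\mathfrak{u}(k)$-valued connection $\omega_{\mathrm{c}}$ of Proposition~\ref{prop:4.1} by composing it with an $\mathrm{Ad}(G)$-equivariant projection $\mathfrak{u}(k) \to \gfrak$. Such a projection exists because $G$ is compact: averaging any inner product on $\mathfrak{u}(k)$ over $G$ yields an $\mathrm{Ad}(G)$-invariant inner product, with respect to which $\gfrak \subset \mathfrak{u}(k)$ has an $\mathrm{Ad}(G)$-stable orthogonal complement. The associated orthogonal projection $p \colon \mathfrak{u}(k) \to \gfrak$ is therefore $\mathrm{Ad}(G)$-equivariant, and we set
$$
\omega_G = p \circ \omega_{\mathrm{c}}.
$$

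The bulk of the verification is checking that $\omega_G$ is a connection on $\pi \colon \V_k(\CC^n) \to BG_n$. First, by construction $\omega_G$ takes values in $\gfrak$. Second, for $u \in G \subset \mathrm{U}(k)$, the transformation law $r_u^* \omega_{\mathrm{c}} = \mathrm{Ad}_{u^{-1}} \omega_{\mathrm{c}}$ established in the previous proposition, combined with the $\mathrm{Ad}(G)$-equivariance of $p$, gives
$$
r_u^* \omega_G = p \circ (r_u^* \omega_{\mathrm{c}}) = p \circ \mathrm{Ad}_{u^{-1}} \omega_{\mathrm{c}} = \mathrm{Ad}_{u^{-1}} (p \circ \omega_{\mathrm{c}}) = \mathrm{Ad}_{u^{-1}} \omega_G.
$$
Third, since the $G$-action on $\V_k(\CC^n)$ is the restriction of the $\mathrm{U}(k)$-action, for $x \in \gfrak$ the fundamental vector field $x^{\sharp}$ for the $G$-action coincides with the one for the $\mathrm{U}(k)$-action; therefore $\omega_{\mathrm{c}}(x^{\sharp}) = x \in \gfrak$ and $\omega_G(x^{\sharp}) = p(x) = x$, since $p$ is the identity on $\gfrak$.

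Finally, the stability under the inclusions is immediate: the projection $p$ is a fixed linear map independent of $n$, so
$$
j^* \omega_G = j^*(p \circ \omega_{\mathrm{c}}) = p \circ (j^* \omega_{\mathrm{c}}) = p \circ \omega_{\mathrm{c}} = \omega_G,
$$
using the identity $j^* \omega_{\mathrm{c}} = \omega_{\mathrm{c}}$ from Proposition~\ref{prop:4.1}. The only step that requires any thought is the construction of the equivariant projection $p$, but this is a standard consequence of the compactness of $G$; everything else is a formal transport of the corresponding properties of $\omega_{\mathrm{c}}$ along $p$.
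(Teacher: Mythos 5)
Your proposal is correct and follows essentially the same route as the paper: both define $\omega_G = p \circ \omega_{\mathrm{c}}$ for an $\mathrm{Ad}$-equivariant projection $p \colon \mathfrak{u}(k) \to \gfrak$ obtained from an invariant inner product, and then transport the connection axioms and the compatibility $j^*\omega_{\mathrm{c}} = \omega_{\mathrm{c}}$ along $p$. The only cosmetic difference is that you average the inner product over $G$ to get $\mathrm{Ad}(G)$-invariance, while the paper starts from an $\mathrm{Ad}$-invariant inner product on $\mathfrak{u}(k)$; either suffices, since equivariance is only needed for $u \in G$.
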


\begin{proof}
Pick an $\mathrm{Ad}$-invariant inner product on the Lie algebra $\mathfrak{u}(k)$ of $\mathrm{U}(k)$. Then we have a direct sum decomposition
$$
\mathfrak{u}(k) = \mathfrak{g} \oplus \mathfrak{g}^{\perp},
$$ 
where $\mathfrak{g}$ is the Lie algebra of $G$ and $\mathfrak{g}^{\perp}$ is its orthogonal complement with respect to this inner product. We put
$$
\omega_G = \mathrm{pr}_{\mathfrak{g}} \circ \omega_{\mathrm{c}},
$$
where $\mathrm{pr}_{\mathfrak{g}}$ is the canonical projection onto the first summand of the above decomposition and $\omega_{\mathrm{c}}$ is the canonical connection of Proposition~\ref{prop:4.1}. We claim that $\omega_G$ satisfies the required conditions. Indeed, since the inner product on $\mathfrak{u}(k)$ is $\mathrm{Ad}$-invariant, we have that $\mathrm{pr}_{\mathfrak{g}} \circ \mathrm{Ad}_{u} = \mathrm{Ad}_{u} \circ \mathrm{pr}_{\mathfrak{g}}$ for all $u \in \mathrm{U}(k)$. Thus, if $u \in \mathrm{U}(k)$, then for each point $A$ in $\V_k(\CC^n)$ and for each tangent vector $M$ of $\V_k(\CC^n)$ at $A$,
\begin{align*}
[r_{u}^*\omega_G(A)](M) &= [\omega_G(Au)](Mu) = \mathrm{pr}_{\mathfrak{g}} \{ [\omega_{\mathrm{c}}(Au)](Mu) \} =\mathrm{pr}_{\mathfrak{g}} \{ \mathrm{Ad}_{u^{-1}} [\omega_{\mathrm{c}}(A)](M) \} \\
& = \mathrm{Ad}_{u^{-1}}  \mathrm{pr}_{\mathfrak{g}} \{ [\omega_{\mathrm{c}}(A)](M) \}=  \mathrm{Ad}_{u^{-1}} [\omega_G(A)](M). 
\end{align*}
On the other hand, if $x \in \mathfrak{g}$, then for each point $A$ in $\V_k(\CC^{n})$,
$$
[\omega_G (A)](x^{\sharp}(A)) = \mathrm{pr}_{\mathfrak{g}} \{ [\omega_{\mathrm{c}}(A)](x^{\sharp}(A)) \} = \mathrm{pr}_{\mathfrak{g}} x = x. 
$$
Finally, the condition $j^* \omega_G = \omega_G$ follows by a straightforward calculation. 
\end{proof}

\subsection{Smooth approximation for classifying spaces}\label{sec:4.2}
In this part we prove a smooth approximation result for the classifying space of a compact Lie group. We start by summarizing some of the necessary definitions.

Let $G$ be any compact Lie group, which we may suppose to be embedded in $\mathrm{U}(k)$ for some $k$, and let $BG$ denote its classifying space. As we have discussed in the previous section, $BG$ is the direct limit of finite-dimensional manifolds $BG_n$ with respect to natural inclusions. We may thus consider the smooth fundamental $\infty$-groupoid $\pi_{\infty}BG_{n\sbullet}$ of each $BG_n$, and define the smooth fundamental $\infty$-groupoid $\pi_{\infty}BG_{\sbullet}$ of $BG$ to be the direct limit of these simplicial sets over all $n$. On the other hand, if $BG$ is regarded as a topological space with the direct limit topology, we may also consider the continuous fundamental $\infty$-groupoid of $BG$, which we denote by $\pi^{\mathrm{c}}_{\infty}BG_{\sbullet}$. This is defined by setting $\pi^{\mathrm{c}}_{\infty}BG_{p}$ to be the set of continuous maps from the standard $p$-simplex $\Delta_p$ to $BG$. By the universal property of direct limits, there is a map of simplicial sets $j_{\sbullet}  \colon \pi_{\infty}BG_{\sbullet} \to \pi^{\mathrm{c}}_{\infty}BG_{\sbullet}$. The proof of the following result uses an argument we learned from Neil Strickland's answer to a question on MathOverflow (see \cite{StricklandMathoverflow}). 

\begin{proposition}\label{prop:4.3}
The map $j_{\sbullet} \colon \pi_{\infty}BG_{\sbullet} \to \pi^{\mathrm{c}}_{\infty}BG_{\sbullet}$ is a weak equivalence.  
\end{proposition}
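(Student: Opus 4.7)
The proof would proceed in three steps, reducing the statement to the classical smooth approximation theorem applied to each finite‑dimensional approximation $BG_n$.

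First, I would verify that the continuous fundamental $\infty$‑groupoid of $BG$ is already the direct limit of the continuous fundamental $\infty$‑groupoids of the $BG_n$. Concretely, I would check that the canonical map
\[
\operatorname{colim}_n \pi^{\mathrm{c}}_\infty BG_{n\sbullet} \longrightarrow \pi^{\mathrm{c}}_\infty BG_{\sbullet}
\]
is an isomorphism of simplicial sets. This relies on the fact that the inclusions $BG_n \hookrightarrow BG_{n+1}$ are closed and that $BG$ carries the direct limit topology; since each $\Delta_p$ is compact, any continuous map $\Delta_p \to BG$ has compact image contained in some $BG_n$, so every $p$‑simplex lifts to some finite level. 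Combined with the tautology $\pi_\infty BG_{\sbullet} = \operatorname{colim}_n \pi_\infty BG_{n\sbullet}$, this writes $j_{\sbullet}$ as the filtered colimit of the level maps $j_{n\sbullet}\colon \pi_\infty BG_{n\sbullet} \to \pi^{\mathrm{c}}_\infty BG_{n\sbullet}$.

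Second, I would invoke the classical smooth approximation theorem: for any smooth manifold $M$, the inclusion of the smooth singular simplicial set into the continuous one is a weak equivalence. This is standard — one smooths a continuous $\Delta_p \to M$ by convolution/mollification or partitions of unity, relative to a smooth map on a neighborhood of the boundary, and one checks that continuously homotopic smooth maps are smoothly homotopic. Applied to $M = BG_n$, this yields that each $j_{n\sbullet}$ is a weak equivalence of simplicial sets.

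Third, I would appeal to the fact that filtered colimits in the category of simplicial sets preserve weak equivalences, since the homotopy groups $\pi_k$ commute with filtered colimits of simplicial sets. Since $j_{\sbullet}$ is exhibited as a filtered colimit of the weak equivalences $j_{n\sbullet}$, it is itself a weak equivalence, completing the argument.

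The main obstacle is the topological bookkeeping in the first step: one needs the direct limit topology on $BG$ to be well enough behaved (in particular, that compact subsets are absorbed at some finite level) to guarantee the simplicial‑set identification $\pi^{\mathrm{c}}_\infty BG_{\sbullet} = \operatorname{colim}_n \pi^{\mathrm{c}}_\infty BG_{n\sbullet}$. Once this is in hand, the remaining pieces — smooth approximation on manifolds and commutation of $\pi_*$ with filtered colimits — are entirely standard.
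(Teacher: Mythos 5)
Your argument is correct, but it is genuinely different from the one in the paper. The paper never decomposes $j_{\sbullet}$ level by level: it passes to geometric realizations, fits $\vert j_{\sbullet}\vert$ into a triangle over $BG$ via the two evaluation maps $\mathrm{ev}\colon \vert \pi_{\infty}BG_{\sbullet}\vert \to BG$ and $\mathrm{ev}^{\mathrm{c}}\colon \vert \pi^{\mathrm{c}}_{\infty}BG_{\sbullet}\vert \to BG$, quotes that $\mathrm{ev}^{\mathrm{c}}$ is a weak equivalence, and then shows $\mathrm{ev}$ is one by exhibiting an explicit section $\eta$ and the ``shrink each simplex toward its evaluation point'' homotopy $\sigma_{\mathbf{t},\lambda}(\mathbf{s}) = \sigma((1-\lambda)\mathbf{s} + \lambda\mathbf{t})$ (Strickland's trick); two-out-of-three finishes the proof. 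The only smoothness input there is that the reparametrized simplices are again smooth, so the argument works directly on the infinite-dimensional $BG$ with no smooth approximation theorem and no colimit bookkeeping. Your route instead trades the realization machinery for two standard but nontrivial external inputs: the Whitney-type statement that $\pi_{\infty}M_{\sbullet} \to \pi^{\mathrm{c}}_{\infty}M_{\sbullet}$ is a weak equivalence for a finite-dimensional manifold $M$, and the commutation of $\pi_k$ with filtered colimits of simplicial sets. Your step 1 is sound: the $BG_n$ are compact, the inclusions $BG_n \hookrightarrow BG_{n+1}$ are closed embeddings, and $BG$ carries the colimit topology, so compact subsets (in particular images of simplices and of homotopies) are absorbed at a finite stage and each $BG_n$ retains its manifold topology as a subspace; together with the fact that $\pi_{\infty}BG_{\sbullet}$ is by definition the colimit of the $\pi_{\infty}BG_{n\sbullet}$, this exhibits $j_{\sbullet}$ as a filtered colimit of the $j_{n\sbullet}$ along monomorphisms, and the conclusion follows. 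The paper's proof is shorter and more self-contained; yours is arguably more robust in that it isolates exactly where smoothness matters (on each finite-dimensional approximation) and would apply verbatim to any colimit of closed embeddings of manifolds.
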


\begin{proof}
Let $\vert \pi_{\infty}BG_{\sbullet} \vert$ and $\vert \pi^{\mathrm{c}}_{\infty}BG_{\sbullet} \vert$ be the geometric realisations of the simplicial sets $\pi_{\infty}BG_{\sbullet}$ and $\pi^{\mathrm{c}}_{\infty}BG_{\sbullet}$, and let $\vert j_{\sbullet} \vert \colon \vert \pi_{\infty}BG_{\sbullet} \vert \to \vert \pi^{\mathrm{c}}_{\infty}BG_{\sbullet} \vert$ be the continuous map induced by $j_{\sbullet}$. We must show that $\vert j_{\sbullet} \vert$ is a weak homotopy equivalence. To this end, we consider the evaluation maps $\mathrm{ev} \colon \vert \pi_{\infty}BG_{\sbullet} \vert \to BG$ and $\mathrm{ev}^{\mathrm{c}} \colon \vert \pi^{\mathrm{c}}_{\infty}BG_{\sbullet} \vert \to BG$. Then clearly there is a commutative diagram
$$
\xymatrix{\vert \pi_{\infty}BG_{\sbullet} \vert  \ar[rr]^-{\vert j_{\sbullet} \vert} \ar[dr]_-{\mathrm{ev}}& & \vert \pi^{\mathrm{c}}_{\infty}BG_{\sbullet} \vert \ar[ld]^-{\mathrm{ev}^{\mathrm{c}}} \\
 & BG. &}
$$
We know that $\mathrm{ev}^{\mathrm{c}}$ is a weak homotopy equivalence, therefore, it will be enough to show that $\mathrm{ev}$ is a weak homotopy equivalence. For this let $\eta \colon BG \to \vert \pi_{\infty}BG_{\sbullet} \vert$ be the natural map which sends $x \in BG$ to the equivalence class $[\sigma_x, \mathbf{0}]$, where $\sigma_x$ is the zero simplex mapping to the point $x$. It is immediate to check that $\mathrm{ev} \circ \eta = \id_{BG}$. On the other hand, consider the map $F\colon \vert \pi_{\infty}BG_{\sbullet} \vert \times [0,1] \to \vert \pi_{\infty}BG_{\sbullet} \vert$ defined for $\sigma \in \pi_{\infty}BG_p$, $\mathbf{t} \in \Delta_p$ and $\lambda \in [0,1]$ by
$$
F([\sigma,\mathbf{t}], \lambda) = [\sigma_{\mathbf{t},\lambda}, \mathbf{t}],
$$
where $\sigma_{\mathbf{t},\lambda} \in \pi_{\infty}BG_p$ here is given by $\sigma_{\mathbf{t},\lambda}(\mathbf{s}) = \sigma((1-\lambda)\mathbf{s} + \lambda \mathbf{t})$. Then $F$ is continuous and is clearly a homotopy between $\id_{BG}$ and $\eta \circ \mathrm{ev}$. This shows the desired assertion. 
\end{proof}

We also note the following general result by Holstein \cite[Corollary~20]{Holstein2015}.

\begin{lemma}\label{LHolstein}
Let  $f_{\sbullet}\colon K_{\sbullet }\to L_{\sbullet}$ be a weak equivalence of simplicial sets. Then the pull-back functor
$f_{\sbullet}^*\colon \Rep_{\infty}(L_{\sbullet}) \to \Rep_{\infty}(K_{\sbullet})$ is a quasi-equivalence.
\end{lemma}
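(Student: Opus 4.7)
The plan is to verify the two defining conditions of a quasi-equivalence: that $f_\sbullet^*$ is quasi-fully-faithful and quasi-essentially surjective. First I would fix two representations up to homotopy $(E, F_\sbullet)$ and $(E', F'_\sbullet)$ on $L_\sbullet$ and analyze the induced map
\[
f_\sbullet^* \colon \underline{\Hom}((E, F_\sbullet), (E', F'_\sbullet)) \longrightarrow \underline{\Hom}(f_\sbullet^*(E, F_\sbullet), f_\sbullet^*(E', F'_\sbullet)).
\]
Both hom-complexes carry a natural decreasing filtration by simplicial degree $p$, and the associated spectral sequences have $E_2$-pages given by the simplicial cohomology of the respective bases with coefficients in a local system built from $\Hom(E, E')$ and the pointwise differentials $F_0$, $F'_0$. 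Since weak equivalences of simplicial sets induce isomorphisms on cohomology with any local coefficient system, $f_\sbullet^*$ is an isomorphism on $E_2$-pages, and a standard comparison theorem for convergent spectral sequences then forces $f_\sbullet^*$ to be a quasi-isomorphism of hom-complexes.

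For quasi-essential surjectivity, given $(E, F_\sbullet) \in \Rep_{\infty}(K_\sbullet)$, the plan is to construct $(\tilde E, \tilde F_\sbullet) \in \Rep_{\infty}(L_\sbullet)$ whose pullback is quasi-isomorphic to $(E, F_\sbullet)$. I would define $\tilde E$ on $L_0$ by transporting $E$ along the $\pi_0$-bijection induced by $f_\sbullet$, then build the higher structure maps $\tilde F_p$ inductively in $p$. The obstruction at each stage lives in a cohomology group of $L_\sbullet$ that matches, via the weak-equivalence invariance established in the previous step, the corresponding obstruction for $(E, F_\sbullet)$ on $K_\sbullet$, so the existing structure on $K_\sbullet$ forces existence on $L_\sbullet$ stage by stage.

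The main obstacle will be making this inductive lifting argument coherent across all simplicial degrees, as representations up to homotopy encode an infinite tower of coherence data with no finite-dimensional truncation. Holstein handles this by realizing $\Rep_{\infty}(K_\sbullet)$ as a DG category of twisted modules over a DG algebra naturally associated with $K_\sbullet$, where a model-categorical cofibrant replacement reduces invariance under weak equivalences to the standard statement that quasi-isomorphisms of DG algebras induce quasi-equivalences of their module categories. The cleanest route is therefore to invoke \cite[Corollary~20]{Holstein2015} directly, which is precisely the assertion of the lemma.
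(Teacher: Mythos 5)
The paper gives no proof of this lemma at all---it is quoted verbatim as Corollary~20 of Holstein \cite{Holstein2015}---and your proposal ultimately lands on exactly that citation, so it matches the paper's approach. The preliminary spectral-sequence and obstruction-theoretic sketch is a reasonable outline of how one might argue directly (and the filtration argument for quasi-full-faithfulness parallels Lemma~\ref{lem:4.10} elsewhere in the paper), but since you correctly identify that the coherence of the inductive lifting is the hard part and defer to Holstein for it, the net content is the same as the paper's.
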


Combining this lemma with Proposition \ref{prop:4.3}, we have the following. 

\begin{proposition}\label{prop:4.4}
The pullback functor $j_{\sbullet}^* \colon \Rep_{\infty}(\pi^{\mathrm{c}}_{\infty}BG_{\sbullet}) \to \Rep_{\infty}(\pi_{\infty}BG_{\sbullet})$ is a quasi-equivalence. 
\end{proposition}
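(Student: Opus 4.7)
The proposition is essentially a one-line consequence of the two results that immediately precede it, so my plan is simply to combine them in the obvious way. First I would recall that Proposition~\ref{prop:4.3} establishes that the comparison map of simplicial sets
$$
j_{\sbullet} \colon \pi_{\infty}BG_{\sbullet} \longrightarrow \pi^{\mathrm{c}}_{\infty}BG_{\sbullet}
$$
is a weak equivalence, via the factorization through the evaluation maps into $BG$ together with the standard fact that $\mathrm{ev}^{\mathrm{c}}$ is a weak equivalence and the explicit linear homotopy $F$ showing that $\mathrm{ev}$ is one as well.

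Then I would invoke Lemma~\ref{LHolstein}, applied to the simplicial map $f_{\sbullet} = j_{\sbullet}$ with $K_{\sbullet} = \pi_{\infty}BG_{\sbullet}$ and $L_{\sbullet} = \pi^{\mathrm{c}}_{\infty}BG_{\sbullet}$. Lemma~\ref{LHolstein} asserts precisely that the pullback DG functor along any weak equivalence of simplicial sets is a quasi-equivalence of the associated DG categories of representations up to homotopy. Hence
$$
j_{\sbullet}^{*} \colon \Rep_{\infty}(\pi^{\mathrm{c}}_{\infty}BG_{\sbullet}) \longrightarrow \Rep_{\infty}(\pi_{\infty}BG_{\sbullet})
$$
is a quasi-equivalence, as claimed.

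There is no real obstacle here: the only nontrivial inputs, namely the weak equivalence $j_{\sbullet}$ and the homotopy-invariance statement for $\Rep_{\infty}$, are already available. In particular, I would not reprove Holstein's lemma; the entire argument amounts to verifying that the hypotheses of Lemma~\ref{LHolstein} are satisfied by $j_{\sbullet}$, which is exactly the content of Proposition~\ref{prop:4.3}.
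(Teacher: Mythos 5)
Your proposal is correct and is exactly the paper's argument: the paper derives Proposition~\ref{prop:4.4} by combining Lemma~\ref{LHolstein} with Proposition~\ref{prop:4.3}, precisely as you do. Nothing further is needed.
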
 

We conclude that the DG category $\Rep_{\infty}(\pi_{\infty}BG_{\sbullet})$ can be thought of as the category of $\infty$-local systems on $BG$. We will therefore abuse notation and write $\Loc_{\infty}(BG)$ instead of $\Rep_{\infty}(\pi_{\infty}BG_{\sbullet})$. 

\subsection{Auxiliary lemmas}
In this part we prove a number of technical results which will be crucial in our arguments later on, but which the reader only interested in the principal results might omit on first reading. We use freely the notation, terminology and definitions of \S\ref{sec:2.3}, \S\ref{sec:2.4} and \S\ref{sec:3.2}.

Let $X$ be a smooth manifold and let $(E,D)$ be an $\infty$-local system on $X$. We know already that $D$ can be expanded as
$$
D = d_{\nabla} + \alpha_0 + \alpha_2 + \alpha_3 + \cdots,
$$
where $\alpha_k \in \Omega^k(X,\End(E)^{1-k})$. Also, recall that $\alpha_0^2 = 0$. We will write $\Hcal^{\sbullet}(E)$ for the cohomology of the graded vector bundle $E$ with respect to $\alpha_0$. We also remark here that for a pair of objects $(E,D)$ and $(E',D')$ in $\Loc_{\infty}(X)$ a quasi-isomorphism from $(E,D)$ to $(E',D')$ is precisely a closed element of $\Omega^{\sbullet}(X,\Hom(E,E'))$ of degree $0$ for which its homogeneous component of partial degree $0$ is a quasi-isomorphism of cochain complexes. The following result is a direct translation of Theorem~4.13 in \cite{AriasAbad-Crainic2012}. 

\begin{lemma}\label{lem:4.5}
The complex $\Hcal^{\sbullet}(E)$ can be given the structure of a flat superconnection $\overline{D}$ such that there is an isomorphism of $\infty$-local systems from $(\Hcal^{\sbullet}(E),\overline{D})$ onto $(E,D)$. Moreover, there is a spectral sequence with second page $\Ecal_2^{p,q}=\uH^{p}(X,\Hcal^{q}(E))$ and which converges to $\uH^{p+q}(X,E)$. 
\end{lemma}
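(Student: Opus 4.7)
The plan is to transfer the flat superconnection $D$ to the cohomology bundle $\Hcal^{\sbullet}(E)$ using the homological perturbation lemma. First I would pick, at the level of graded vector bundles, a splitting $E\cong \Hcal^{\sbullet}(E)\oplus \mathrm{im}(\alpha_0)\oplus C$, where $C$ is a smooth complement of $\ker(\alpha_0)$; this is available once one assumes (as is implicit in the statement) that $\alpha_0$ has locally constant rank, so that $\Hcal^{\sbullet}(E)$ is a genuine vector bundle. From this splitting I obtain the standard contraction data: an inclusion $i\colon\Hcal^{\sbullet}(E)\hookrightarrow E$, a projection $p\colon E\to\Hcal^{\sbullet}(E)$, and a homotopy $h\colon E\to E$ with $\alpha_0 h+h\alpha_0=\id_E-ip$, satisfying the side conditions $h^2=0$, $hi=0$, $ph=0$. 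These extend $\Omega^{\sbullet}(X)$-linearly to operators $I, P, H$ on $\Omega^{\sbullet}(X,E)$, giving a contraction between $(\Omega^{\sbullet}(X,E),\alpha_0)$ and $(\Omega^{\sbullet}(X,\Hcal^{\sbullet}(E)),0)$.

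Next I would treat the remainder $t=D-\alpha_0 = d_\nabla+\alpha_2+\alpha_3+\cdots$ as a perturbation. The decisive point is that every summand of $t$ strictly raises the partial form degree, while $H$ preserves it; hence $tH$ is locally nilpotent and, on a finite-dimensional $X$, the series $(1-tH)^{-1}=\sum_{n\geq 0}(tH)^n$ reduces to a finite sum. The homological perturbation lemma then yields
\[
\overline D \;=\; P(1-tH)^{-1}t\, I
\]
together with perturbed maps $\tilde I=I+H(1-tH)^{-1}tI$ and $\tilde P=P+Pt(1-Ht)^{-1}H$ and a perturbed homotopy $\tilde H$. Flatness $\overline D{}^2=0$ and the relations $\tilde P\tilde I=\id$, $\tilde I\tilde P-\id = \partial_{D,D}(\tilde H)$ are immediate consequences of the lemma. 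Since $I, P, H$ are $\Omega^{\sbullet}(X)$-linear, $\overline D$ automatically obeys the Leibniz rule of a superconnection, so $(\Hcal^{\sbullet}(E),\overline D)$ is an object of $\Loc_{\infty}(X)$; moreover $\tilde I$ and $\tilde P$ are mutually inverse in $\mathbf{Ho}(\Loc_{\infty}(X))$, which is the isomorphism asserted by the first part of the lemma.

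The spectral sequence is independent of this transfer. I would filter $\Omega^{\sbullet}(X,E)$ by partial form degree, $F^r=\bigoplus_{p\geq r,\, q}\Omega^p(X,E^q)$; this is a bounded decreasing filtration because $X$ is finite-dimensional. Since $\alpha_0$ has partial degree $0$, $d_\nabla$ has partial degree $1$, and each $\alpha_k$ (for $k\geq 2$) has partial degree $k\geq 2$, the operator $D$ preserves $F^{\sbullet}$, and on the associated graded only $\alpha_0$ survives. Hence $E_1^{r,q}=\Omega^r(X,\Hcal^q(E))$, and the induced $d_1$ is the de Rham differential twisted by the connection $\overline\nabla$ that $\nabla$ induces on $\Hcal^{\sbullet}(E)$ (well defined because $d_\nabla\alpha_0=0$). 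This gives $E_2^{r,q}=\uH^r(X,\Hcal^q(E))$, and boundedness yields convergence to $\uH^{r+q}(\Omega^{\sbullet}(X,E),D)=\uH^{r+q}(X,E)$.

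The main obstacle I anticipate is verifying that $\overline D$ is a genuine flat superconnection rather than merely an abstract differential on $\Omega^{\sbullet}(X,\Hcal^{\sbullet}(E))$: the HPL outputs $\overline D$ as a single operator on the tensored complex, and one has to reorganize it according to partial form degree to identify a connection piece $d_{\overline\nabla}$ together with the higher tensorial pieces $\overline\alpha_k$ that satisfy the Maurer-Cartan relations listed in \S\ref{sec:2.3}. The $\Omega^{\sbullet}(X)$-linearity of $I, P, H$ does most of this work, but underneath it lurks the regularity hypothesis that $\Hcal^{\sbullet}(E)$ is a smooth bundle, which must be invoked at the outset and is not discussed in the statement.
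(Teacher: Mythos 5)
The paper offers no proof of this lemma at all: it is imported verbatim from Theorem~4.13 of \cite{AriasAbad-Crainic2012}, and the proof given there is essentially the homological perturbation argument you describe, so your proposal is a correct reconstruction of the intended argument rather than a different route. Three small points. First, the regularity hypothesis you flag at the end as an undiscussed extra assumption is in fact automatic: the structure equation $d_{\nabla}\alpha_0=0$ says that $\alpha_0$ is parallel for $\nabla$, so parallel transport conjugates $(\alpha_0)_x$ into $(\alpha_0)_y$ along any path; hence $\alpha_0$ has locally constant rank, $\ker\alpha_0$ and $\mathrm{im}\,\alpha_0$ are smooth subbundles, and $\Hcal^{\sbullet}(E)$ is a genuine graded vector bundle carrying the induced connection $\overline{\nabla}$ (flat, since $F_{\nabla}=-[\alpha_0,\alpha_2]$ dies on $\alpha_0$-cohomology). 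Second, the side conditions $h^2=0$, $hi=0$, $ph=0$ are not cosmetic: they are exactly what kills the Leibniz leakage produced by the copies of $d_{\nabla}$ inside the words $P(tH)^{j}tI$, $H(tH)^{j}tI$, $P(tH)^{j}tH$, so that $\overline{D}-d_{\overline{\nabla}}$, $\tilde I - I$ and $\tilde P - P$ are $\Omega^{\sbullet}(X)$-linear and $\overline{D}$ really is a superconnection; this disposes of the ``main obstacle'' you anticipate. Third, what the perturbation lemma delivers is a pair of closed degree-zero morphisms with $\tilde P\tilde I=\id$ and $\tilde I\tilde P-\id=\partial_{D,D}(\tilde H)$, i.e.\ an isomorphism in $\mathbf{Ho}(\Loc_{\infty}(X))$; a literal isomorphism in $\Loc_{\infty}(X)$ would force $E$ and $\Hcal^{\sbullet}(E)$ to have the same rank, so the word ``isomorphism'' in the statement must be read in the homotopy category, which is what your construction provides and is all that the subsequent Lemma~\ref{lem:4.6} requires. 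The spectral-sequence half of your argument is the same partial-degree filtration the paper itself uses in Lemma~\ref{lem:4.7} and is correct as written.
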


We consider the full DG subcategory $\Loc_{\infty}^{0}(X)$ of $\Loc_{\infty}(X)$ consisting of those $\infty$-local systems $(E,D)$ for which $\alpha_0 = 0$. The next result is an immediate consequence of the preceding lemma. 

\begin{lemma}\label{lem:4.6}
The natural inclusion DG functor $\Loc_{\infty}^{0}(X) \to \Loc_{\infty}(X)$ is a quasi-equivalence. 
\end{lemma}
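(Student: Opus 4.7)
The proof will reduce immediately to the two defining properties of a quasi-equivalence: quasi fully faithful and quasi essentially surjective. Since $\Loc_\infty^0(X)$ is, by construction, a \emph{full} DG subcategory of $\Loc_\infty(X)$, quasi fully faithfulness is automatic: for any two objects $(E,D),(E',D')\in\Loc_\infty^0(X)$, the map on morphism complexes induced by the inclusion is the identity on $\Omega^\bullet(X,\Hom(E,E'))$ equipped with the same differential $\partial_{D,D'}$, and the identity is trivially a quasi-isomorphism. So there is nothing to do here.

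The substantive content is quasi essential surjectivity, and this is precisely what Lemma~\ref{lem:4.5} delivers. Given an arbitrary object $(E,D)\in \Loc_\infty(X)$, apply Lemma~\ref{lem:4.5} to obtain a flat superconnection $\overline{D}$ on the cohomology bundle $\Hcal^\bullet(E)$ together with an isomorphism of $\infty$-local systems from $(\Hcal^\bullet(E),\overline{D})$ to $(E,D)$. I would then observe that $(\Hcal^\bullet(E),\overline{D})$ lies in $\Loc_\infty^0(X)$: expanding $\overline{D}=d_{\overline\nabla}+\overline\alpha_0+\overline\alpha_2+\cdots$, the partial degree zero piece $\overline\alpha_0$ is the endomorphism induced by $\alpha_0$ on the cohomology of $(E,\alpha_0)$, which vanishes identically. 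Hence $(\Hcal^\bullet(E),\overline{D})\in \Ob\Loc_\infty^0(X)$ and becomes isomorphic to $(E,D)$ in $\mathbf{Ho}(\Loc_\infty(X))$; in particular, the homotopy category of the inclusion hits every isomorphism class.

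Combining these two observations, the inclusion DG functor $\Loc_\infty^0(X)\to\Loc_\infty(X)$ is both quasi fully faithful and quasi essentially surjective, hence a quasi-equivalence. The only non-formal input is the existence of the cohomology model $(\Hcal^\bullet(E),\overline{D})$ provided by Lemma~\ref{lem:4.5}, which is where all the real work has been done; no separate obstacle arises in the proof of Lemma~\ref{lem:4.6} itself.
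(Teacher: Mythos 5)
Your proof is correct and follows exactly the route the paper intends: the paper gives no separate argument for Lemma~\ref{lem:4.6}, simply calling it ``an immediate consequence'' of Lemma~\ref{lem:4.5}, and your unpacking (fullness of the subcategory handles quasi fully faithfulness; the cohomology model $(\Hcal^{\sbullet}(E),\overline{D})$ with vanishing partial-degree-zero component handles quasi essential surjectivity) is precisely that consequence spelled out.
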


Next, let us make an observation that will prove to be useful later. 

\begin{lemma}\label{lem:4.7}
Let $(E,D)$ and $(E',D')$ be two $\infty$-local systems on $X$. Then the space of morphism $\Omega^{\sbullet}(X,\Hom(E,E'))$ in the DG category $\Loc_{\infty}(X)$ carries a canonical decreasing filtration by partial degree. This filtration induces a spectral sequence with second page $\Ecal_2^{p,q} = \uH^{p}(X,\Hcal^{q}(\Hom(E,E')))$ and which converges to $\uH^{p+q}(\Omega^{\sbullet}(X,\Hom(E,E')))$. 
\end{lemma}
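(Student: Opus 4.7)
The plan is to introduce the filtration by partial degree and identify the first few pages of the resulting spectral sequence with standard objects. Define the decreasing filtration
$$
F^p \Omega^{\sbullet}(X, \Hom(E,E')) = \bigoplus_{i \geq p} \Omega^{i}(X,\Hom(E,E')).
$$
Since each homogeneous piece of $D$ and $D'$ has partial degree $\geq 0$, the differential $\partial_{D,D'}$ preserves $F^p$, so this is a filtered cochain complex. The filtration is bounded ($F^0$ is the whole complex, $F^{\dim X + 1} = 0$), so the associated spectral sequence converges strongly to $\uH^{p+q}(\Omega^{\sbullet}(X,\Hom(E,E')))$.

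Next I would identify $E_0$ and $E_1$. Writing $D = d_{\nabla}+\alpha_0+\alpha_2+\cdots$ and $D'= d_{\nabla'}+\alpha'_0+\alpha'_2+\cdots$, only the terms $\alpha_0$ and $\alpha'_0$ contribute to the partial-degree-preserving part of $\partial_{D,D'}$, so the induced differential on the associated graded $F^p/F^{p+1} = \Omega^p(X,\Hom(E,E'))$ is
$$
d_0 \omega = \alpha'_0 \wedge \omega - (-1)^{\vert \omega \vert} \omega \wedge \alpha_0.
$$
This is $\Omega^{\sbullet}(X)$-linear and fiberwise it is precisely the differential making $\Hom(E,E')$ into the mapping complex between the cochain complexes $(E,\alpha_0)$ and $(E',\alpha'_0)$. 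Hence its cohomology is $\Omega^p(X,\Hcal^{q}(\Hom(E,E')))$, giving $E_1^{p,q} = \Omega^{p}(X,\Hcal^{q}(\Hom(E,E')))$.

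Then I would identify $d_1$ as the de Rham differential of a flat connection on $\Hcal^{\sbullet}(\Hom(E,E'))$. The partial-degree-$+1$ component of $\partial_{D,D'}$ is the covariant derivative associated to the connection $\nabla^{\Hom}$ on $\Hom(E,E')$ induced by $\nabla$ and $\nabla'$. The flatness equations
$d_{\nabla} \alpha_0 = 0$ and $d_{\nabla'} \alpha'_0 = 0$ imply $[d_{\nabla^{\Hom}}, d_0] = 0$, so $\nabla^{\Hom}$ descends to a connection on the cohomology bundle; the equation $F_{\nabla}+[\alpha_0,\alpha_2]=0$ and its counterpart for $D'$ imply that the curvature of $\nabla^{\Hom}$ is $d_0$-exact, hence the descended connection on $\Hcal^{\sbullet}(\Hom(E,E'))$ is flat. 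Therefore $d_1$ is the twisted de Rham differential of this flat bundle, and
$$
E_2^{p,q} = \uH^{p}(X,\Hcal^{q}(\Hom(E,E'))),
$$
as claimed.

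The main technical obstacle is the last point: checking that the connection $\nabla^{\Hom}$ induces a well-defined flat connection on the cohomology bundle. This is essentially the mapping-complex analogue of Lemma~\ref{lem:4.5}, and it follows by unwinding the first few partial-degree components of the flatness equation $\partial_{D,D'}^2 = 0$ expanded out term by term; care is only needed with signs in the Koszul convention for the bimodule structure on $\Hom(E,E')$. Convergence and the identification of the abutment are routine once the filtration is shown to be bounded.
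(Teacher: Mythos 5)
Your proof is correct and follows essentially the same route as the paper's: the same filtration by partial degree, the identification of the $E_0$ differential with the fiberwise mapping-complex differential induced by $\alpha_0$ and $\alpha'_0$, and of $d_1$ with the induced flat connection on $\Hcal^{\sbullet}(\Hom(E,E'))$. The paper merely states these identifications without the details you supply (the descent of $\nabla^{\Hom}$ to a flat connection on cohomology and the boundedness argument for convergence), so your write-up is a more complete version of the same argument.
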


\begin{proof}
Explicitly, the filtration is
$$
F^p\Omega^{\sbullet}(X,\Hom(E,E')) = \bigoplus_{k\geq p}\Omega^{k}(X,\Hom(E,E')).
$$
The zeroth page of the spectral sequence is $\Ecal_0^{p,q} = \Omega^{p}(X,\Hom(E,E')^{q})$, where the differential is precisely the one induced by the differential on $\Hom(E,E')$. Therefore the first page of the spectral sequence is $\Ecal_1^{p,q} = \Omega^{p}(X,\Hcal^{q}(\Hom(E,E')))$, with first page differential given by the flat connection induced on $\Hcal^{\sbullet}(\Hom(E,E'))$. Thus, the second page of the spectral sequence becomes $\Ecal_2^{p,q} = \uH^{p}(X,\Hcal^{q}(\Hom(E,E')))$. 
\end{proof}

We also wish to consider a canonical filtration of the DG category $\Loc_{\infty}(X)$. If $m \in \NN$, we let $F^{m}\!\Loc_{\infty}(X)$ be the full DG subcategory of $\Loc_{\infty}(X)$ consisting of $\infty$-local systems $(E,D)$ for which $E^{k} = 0$ for all $k \in \ZZ$ such that $\vert k \vert > m$. Then it is obvious that $F^{m}\!\Loc_{\infty}(X) \subset F^{n}\!\Loc_{\infty}(X)$ whenever $m \leq n$, and any object of $\Loc_{\infty}(X)$ is contained in some $F^{m}\!\Loc_{\infty}(X)$. This means that $\Loc_{\infty}(X)$ is an $\NN$-filtered DG category.

Now let $Y$ be another smooth manifold and let $f \colon X \to Y$ be a smooth map. Consider the pullback DG functor $f^* \colon \Loc_{\infty}(Y) \to \Loc_{\infty}(X)$. Then it can easily be seen that, for any $m \in \NN$, $f^* (F^{m}\! \Loc_{\infty}(Y))$ is a full DG subcategory of $F^{m}\! \Loc_{\infty}(X)$. Thus $f^*$ is an $\NN$-filtered DG functor.

For the following discussion, let us write $\underline{\RR}$ for both the trivial vector bundle $X \times \RR$ of rank one and the constant $\infty$-local system $(X \times \RR, d)$. Also, let $(E,D)$ be an $\infty$-local system on $X$ and let $\xi$ be a closed element of $\Omega^{k}(X,\Hom(E,\underline{\RR})^{l-k})$ for some $k \geq 0$ and some $l \geq 0$. The  extension of $(E,D)$ by $\xi$, which we denote by $(E,D) \rtimes_{\xi} \underline{\RR}$, is the $\infty$-local system on $X$ whose underlying graded vector bundle is the direct sum $E \oplus \underline{\RR}[1-l]$ and whose flat graded superconnection is $D_{\xi} = D + \xi$.  

\begin{lemma}\label{lem:4.8}
Let $(E,D)$ and $(E',D')$ be two $\infty$-local systems on $X$ and let $\Phi$ be a quasi-isomorphism from $(E,D)$ onto $(E',D')$. If $\xi'$ is a closed element of $\Omega^{k}(X,\Hom(E',\underline{\RR})^{l-k})$ for some $k \geq 1$ and some $l \geq 0$, then $\Phi + \id_{\underline{\RR}[1-l]}$ defines a quasi-isomorphism from $(E,D) \rtimes_{\xi' \circ \Phi} \underline{\RR}$ onto $(E',D') \rtimes_{\xi'} \underline{\RR}$. 
\end{lemma}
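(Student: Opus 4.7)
Set $\Psi := \Phi + \id_{\underline{\RR}[1-l]}$. The statement asserts two things about $\Psi$: that it is a closed morphism of degree $0$ between the two extensions, and that it is a quasi-isomorphism. By the characterisation of quasi-isomorphisms in $\Loc_\infty(X)$ recalled just before Lemma~\ref{lem:4.5}, to establish the latter it suffices to verify that the homogeneous component of $\Psi$ of partial degree $0$ is a quasi-isomorphism of cochain complexes.

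\textbf{Closedness.} I would write the two extended superconnections in block form with respect to the decompositions $E\oplus\underline{\RR}[1-l]$ and $E'\oplus\underline{\RR}[1-l]$:
$$D_{\xi'\circ\Phi}=\begin{pmatrix} D & 0 \\ \xi'\circ\Phi & d\end{pmatrix},\qquad D'_{\xi'}=\begin{pmatrix} D' & 0 \\ \xi' & d\end{pmatrix},\qquad \Psi=\begin{pmatrix} \Phi & 0 \\ 0 & \id\end{pmatrix}.$$
Since $\Psi$ has degree $0$, the closedness condition reads $D'_{\xi'}\circ\Psi - \Psi\circ D_{\xi'\circ\Phi}=0$. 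A block multiplication reduces this to the single nontrivial entry $D'\circ\Phi-\Phi\circ D=\partial_{D,D'}\Phi$, which vanishes because $\Phi$ is closed. One also needs to know, for the left extension to make sense, that $\xi'\circ\Phi$ is itself closed in $\Omega^{\sbullet}(X,\Hom(E,\underline{\RR}))$; this follows from a short Leibniz computation $\partial_{D,d}(\xi'\circ\Phi)=(\partial_{D',d}\xi')\circ\Phi\pm\xi'\circ\partial_{D,D'}\Phi=0$, using that both $\xi'$ and $\Phi$ are closed.

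\textbf{Partial-degree-zero component.} Because $\xi'$ has partial degree $k\geq 1$, the off-diagonal entries of both $D_{\xi'\circ\Phi}$ and $D'_{\xi'}$ contribute nothing in partial degree zero. Hence the partial-degree-$0$ parts of the superconnections split as direct sums, namely $\alpha_0\oplus 0$ on $E\oplus\underline{\RR}[1-l]$ and $\alpha'_0\oplus 0$ on $E'\oplus\underline{\RR}[1-l]$. The partial-degree-$0$ part of $\Psi$ is then $\Phi_{(0)}\oplus\id_{\underline{\RR}[1-l]}$, where $\Phi_{(0)}$ denotes the partial-degree-$0$ component of $\Phi$. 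Since $\Phi$ is a quasi-isomorphism of $\infty$-local systems, $\Phi_{(0)}$ is a quasi-isomorphism of cochain complexes $(E,\alpha_0)\to(E',\alpha'_0)$, and the identity is of course a quasi-isomorphism on $(\underline{\RR}[1-l],0)$; taking the direct sum yields a quasi-isomorphism. Invoking the criterion above, $\Psi$ is a quasi-isomorphism of $\infty$-local systems, which is the desired conclusion.

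\textbf{Main obstacle.} There is essentially no conceptual difficulty; the only work is degree-bookkeeping. One must keep careful track of the shift conventions for $\underline{\RR}[1-l]$, the signs in $\partial_{D_{\xi'\circ\Phi},D'_{\xi'}}$ on elements of various total and partial degrees, and the fact that the hypothesis $k\geq 1$ is exactly what allows the partial-degree-zero pieces to decouple into block-diagonal form.
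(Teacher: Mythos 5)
Your proposal is correct and follows essentially the same route as the paper: the closedness check is the paper's five-line computation $D'_{\xi'}\circ(\Phi+\id)=(\Phi+\id)\circ D_{\xi'\circ\Phi}$ written in block form, and the quasi-isomorphism claim is settled exactly as in the paper by observing that the partial-degree-$0$ component is $\Phi_0+\id_{\underline{\RR}[1-l]}$. Your two small elaborations — checking that $\xi'\circ\Phi$ is closed so the left-hand extension is well defined, and noting that $k\geq 1$ is what makes the partial-degree-$0$ differentials block-diagonal — are correct refinements of details the paper leaves implicit.
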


\begin{proof}
Since, by definition, $\Phi$ is an element of $\Omega^{\sbullet}(X,\Hom(E,E'))$ of degree $0$, so is $\Phi + \id_{\underline{\RR}[1-l]}$ viewed as an element of $\Omega^{\sbullet}(X,\Hom(E \oplus \underline{\RR}[1-l],E' \oplus \underline{\RR}[1-l]))$. We must show that $\Phi + \id_{\underline{\RR}[1-l]}$ commutes with the graded superconnections $D_{\xi' \circ \Phi}$ and $D'_{\xi'}$. For this we simply compute:
\begin{align*}
D'_{\xi'} \circ (\Phi + \id_{\underline{\RR}[1-l]}) &= (D' + \xi') \circ (\Phi + \id_{\underline{\RR}[1-l]}) \\
&= D' \circ \Phi + \xi' \circ \Phi \\
&= \Phi \circ D + \xi' \circ \Phi \\
&= (\Phi + \id_{\underline{\RR}[1-l]}) \circ (D + \xi' \circ \Phi) \\
&= (\Phi + \id_{\underline{\RR}[1-l]}) \circ D_{\xi' \circ \Phi}. 
\end{align*}
Finally,  $\Phi + \id_{\underline{\RR}[1-l]}$ is a quasi-isomorphism because its homogeneous component of partial degree $0$ is  $\Phi_0 +  \id_{\underline{\RR}[1-l]}$.
\end{proof}

\begin{lemma}\label{lem:4.9}
Let $(E,D)$ be an $\infty$-local system on $X$. If $\xi$ is a closed element of $\Omega^{k}(X,\Hom(E,\underline{\RR})^{l-k})$ and $\eta$ is an element of $\Omega^{k}(X,\Hom(E,\underline{\RR})^{l-1-k})$ for some $k \geq 0$ and some $l \geq 0$, then $\id_{\underline{\RR}[1-l]} - \eta$ defines a quasi-isomorphism from $(E,D) \rtimes_{\xi} \underline{\RR}$ onto $(E,D) \rtimes_{\xi + D\eta} \underline{\RR}$. 
\end{lemma}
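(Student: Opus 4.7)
The plan is to mimic the proof of Lemma \ref{lem:4.8}. I interpret the statement's ``$\id_{\underline{\RR}[1-l]} - \eta$'' as the degree-$0$ endomorphism $\Psi := \id_{E \oplus \underline{\RR}[1-l]} - \eta$ of $E \oplus \underline{\RR}[1-l]$, where $\eta$ is viewed as the off-diagonal form-valued bundle map sending the $E$-summand into the $\underline{\RR}[1-l]$-summand (and extended by zero on the $\underline{\RR}[1-l]$-summand).

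First I would verify the intertwining relation
\begin{equation*}
D_{\xi + D\eta} \circ \Psi = \Psi \circ D_{\xi}.
\end{equation*}
Writing $D_{\xi} = D + \xi$ and $D_{\xi + D\eta} = D + \xi + D\eta$ (with $D$ acting block-diagonally as the given superconnection on the $E$-summand and as $d$ on the $\underline{\RR}[1-l]$-summand), and using that $\xi$, $D\eta$ and $\eta$ all vanish on the $\underline{\RR}[1-l]$-summand, the ``double-application'' terms $\xi \circ \eta$, $\eta \circ \xi$, $(D\eta) \circ \eta$, and $\eta \circ \eta$ all vanish for free. The remaining identity then collapses to $D\eta = d \circ \eta - (-1)^{\vert \eta \vert} \eta \circ D$, which is the defining formula for the differential $\partial_{D, d}$ on the hom complex. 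The main obstacle in this step is the sign bookkeeping: the block-matrix conventions for $D_{\xi}$ must be reconciled with the signs built into $\partial_{D, d}\eta$, and only then do the required cancellations become manifest.

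Second I would check that $\Psi$ is a quasi-isomorphism. Its homogeneous component of partial degree $0$ is the block-triangular bundle map
\begin{equation*}
\Psi_0 = \begin{pmatrix} \id_E & 0 \\ -\eta_0 & \id_{\underline{\RR}[1-l]} \end{pmatrix},
\end{equation*}
where $\eta_0$ denotes the partial-degree-$0$ piece of $\eta$ (which is zero whenever $k \geq 1$). This is unipotent, hence an isomorphism of graded vector bundles and in particular a chain isomorphism between the partial-degree-$0$ cochain complexes associated to the two extensions; by the criterion for quasi-isomorphisms in $\Loc_{\infty}(X)$ recalled just before Lemma \ref{lem:4.5}, this suffices for $\Psi$ to be a quasi-isomorphism.
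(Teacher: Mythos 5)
Your proposal is correct and follows essentially the same route as the paper: the same block computation showing the double-application terms vanish so that the intertwining identity reduces to the hom-complex formula for $D\eta$, followed by the observation that the map is (block-)unipotent. The only cosmetic difference is that the paper concludes by noting $\id_{\underline{\RR}[1-l]} - \eta$ is literally invertible with inverse $\id_{\underline{\RR}[1-l]} + \eta$, rather than passing through the partial-degree-$0$ quasi-isomorphism criterion.
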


\begin{proof}
It is clear that $\id_{\underline{\RR}[1-l]} - \eta$ viewed as an element of $\Omega^{\sbullet}(X,\Hom(E \oplus \underline{\RR}[1-l],E' \oplus \underline{\RR}[1-l]))$ has degree $0$. We need to check that $\id_{\underline{\RR}[1-l]} - \eta$ commutes with the graded superconnections $D_{\xi}$ and $D_{\xi + D\eta}$. Again, we simply compute:
\begin{align*}
D_{\xi + D\eta} \circ (\id_{\underline{\RR}[1-l]} - \eta) &= (D + \xi + D\eta) \circ (\id_{\underline{\RR}[1-l]} - \eta) \\
&= D + \xi + D\eta - D \circ \eta \\
&= D + \xi - \eta \circ D \\
&= (\id_{\underline{\RR}[1-l]} - \eta) \circ (D + \xi) \\
&= (\id_{\underline{\RR}[1-l]} - \eta) \circ D_{\xi}.
\end{align*}
Finally, notice that $\id_{\underline{\RR}[1-l]} - \eta$ is invertible with inverse $\id_{\underline{\RR}[1-l]} + \eta$. 
\end{proof}

Let $K_{\sbullet}$ be a simplicial set and let $(E,F_{\sbullet})$ be a representation up to homotopy of $K_{\sbullet}$. In keeping with the notation used above, we will write $\Hcal^{\sbullet}(E)$ to represent the cohomology of the graded vector bundle $E$ with respect to $F_0$. We will also say that $(E,F_{\sbullet})$ is \emph{normalized} if each cochain $F_p$ vanishes on degenerate simplices. The following result is analogous to Lemma~\ref{lem:4.7}.

\begin{lemma}\label{lem:4.10}
Let $(E,F_{\sbullet})$ and $(E',F'_{\sbullet})$ be two normalized representations up to homotopy of $K_{\sbullet}$. Then the space of morphism $\underline{\Hom}^{\sbullet}((E,F_{\sbullet}),(E',F'_{\sbullet}))$ in the DG category $\Rep_{\infty}(K_{\sbullet})$ carries a decreasing filtration by cochain degree. This filtration induces a spectral sequence with second page $\Ecal_2^{p,q} = \uH^{p}(K_{\sbullet},\Hcal^{q}(\Hom(E,E')))$ and which converges to $\uH^{p+q}(\underline{\Hom}^{\sbullet}((E,F_{\sbullet}),(E',F'_{\sbullet})))$. 
\end{lemma}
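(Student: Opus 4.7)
The plan is to follow the proof of Lemma~\ref{lem:4.7} almost verbatim, with cochain degree on simplicial cochains playing the role of partial form-degree. First I would propose the decreasing filtration
$$
F^k \underline{\Hom}^n((E,F_\sbullet),(E',F'_\sbullet)) = \{\varphi_\sbullet : \varphi_j = 0 \text{ for } j < k\},
$$
and verify from the explicit formula for $\partial_{F,F'}$ given in \S~\ref{sec:2.4} that it is preserved by the differential. A direct inspection shows that, for $\varphi_\sbullet$ concentrated in cochain degree $j$, the cup products $F'_{p-j} \abxcup \varphi_j$ and $\varphi_j \abxcup F_{p-j}$ contribute to $(\partial \varphi)_p$ only in cochain degrees $p \geq j$, while the face-map term contributes only in degree $j+1$. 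Hence $\partial_{F,F'}(F^k) \subseteq F^k$.

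I would then identify the low pages of the induced spectral sequence. Only the summands $F'_0 \abxcup \varphi_j$ and $\varphi_j \abxcup F_0$ preserve cochain degree, and both act fiberwise through the squares-to-zero endomorphisms $F_0$ and $F'_0$. Using the normalization hypothesis to pass to the usual normalized cochain complex, this gives
$$
E_0^{p,q} \cong C^p(K_\sbullet, \Hom(E,E')^q), \qquad E_1^{p,q} \cong C^p(K_\sbullet, \Hcal^q(\Hom(E,E'))).
$$
The $E_1$-differential is collected from the three terms of $\partial_{F,F'}$ that raise cochain degree by exactly one, namely $F'_1 \abxcup \varphi_p$, $\varphi_p \abxcup F_1$, and the alternating sum $\sum_{i=1}^{p} (-1)^{i+n} \varphi_p \circ d_i^p$. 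The representation-up-to-homotopy axioms, especially the $p=2$ identity recalled in \S~\ref{sec:2.4}, ensure that $F_1$ and $F'_1$ descend to honest actions of $\pi_1 K_\sbullet$ on $\Hcal^\bullet(E)$ and $\Hcal^\bullet(E')$, and hence on $\Hcal^\bullet(\Hom(E,E'))$. The $E_1$-page is then the normalized simplicial cochain complex with these local-system coefficients, yielding $E_2^{p,q} \cong \uH^p(K_\sbullet, \Hcal^q(\Hom(E,E')))$.

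For convergence I would observe that the filtration is exhaustive, Hausdorff ($\bigcap_k F^k = 0$) and complete, since an element of $\underline{\Hom}^n$ is by definition a product over all $p \geq 0$, so any coherent family of truncations assembles. Standard convergence results for complete, Hausdorff filtered complexes then deliver convergence to $\uH^{p+q}(\underline{\Hom}^\sbullet((E,F_\sbullet),(E',F'_\sbullet)))$.

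The step I expect to require the most care is the identification of the $E_1$-differential with the twisted simplicial coboundary: superficially $F_1$ is only a representation up to homotopy, and higher $F_n$'s appear in $\partial_{F,F'}$, so one must check that their contributions either do not land in cochain degree $+1$ or are exact on the $E_0$-page. This is exactly what the coherence axioms are designed to guarantee, but it is where the bookkeeping is densest.
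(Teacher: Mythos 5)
Your proposal is correct and follows the same route as the paper: filter $\underline{\Hom}^{\sbullet}$ by cochain degree, identify $\Ecal_0^{p,q}$ as $p$-cochains valued in $\Hom(E,E')^q$ with the fiberwise $F_0$, $F'_0$ differential, and $\Ecal_1^{p,q}$ as cochains valued in $\Hcal^q(\Hom(E,E'))$ with the induced twisted simplicial coboundary. The paper's own proof is only a three-sentence sketch of exactly this computation, so your additional checks (preservation of the filtration by $\partial_{F,F'}$, identification of the $E_1$-differential via the coherence axioms, and completeness/exhaustiveness for convergence) are welcome elaborations rather than a divergence.
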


\begin{proof}
The zeroth page of the spectral sequence $\Ecal^{p,q}_0$ is given by the space of singular $p$-cochains on $K_{\sbullet}$ with values in $\Hom(E,E')^{q}$ with the differential induced by that of $\Hom(E,E')$. Thus the first page of the spectral sequence $\Ecal^{p,q}_1$ is the space $p$-cochains on $K_{\sbullet}$ with values in $\Hcal^{q}(\Hom(E,E'))$, with the first page differential given by the induced one from $\Hcal^{\sbullet}(\Hom(E,E'))$. Hence, the second page of the spectral sequence is indeed $\Ecal_2^{p,q} = \uH^{p}(K_{\sbullet},\Hcal^{q}(\Hom(E,E')))$.
\end{proof}

Next we wish to consider a canonical filtration of $\Rep_{\infty}(K_{\sbullet})$, just as we did for the DG category of $\infty$-local systems. Thus, if $m \in \NN$, we set $F^{m} \! \Rep_{\infty}(K_{\sbullet})$ to be the full DG subcategory of $\Rep_{\infty}(K_{\sbullet})$ consisting of all representations up to homotopy $(E,F_{\sbullet})$ for which $E^{k} = 0$ for all $k \in \ZZ$ such that $\vert k \vert > m$. This turns $\Rep_{\infty}(K_{\sbullet})$ into an $\NN$-filtered DG category. Moreover, if $L_{\sbullet}$ is another simplicial set and $f_{\sbullet} \colon K_{\sbullet} \to L_{\sbullet}$ is a simplicial map, then, for any $m \in \NN$, the pullback DG functor $f_{\sbullet}^* \colon \Rep_{\infty}(L_{\sbullet}) \to \Rep_{\infty}(K_{\sbullet})$ sends $F^{m} \! \Rep_{\infty}(L_{\sbullet})$ to a full DG subcategory of $F^{m} \! \Rep_{\infty}(K_{\sbullet})$. In other words, $f_{\sbullet}^*$ is an $\NN$-filtered DG functor. 

Before moving on, we introduce some terminology and notation regarding simplicial sets. Denoting the usual category of finite ordinals by $\bm{\Delta}$, we obtain for each $n \geq 0$, a subcategory $\bm{\Delta}_{\leq n}$ determined by the objects $[k]$ of $\bm{\Delta}$ with $k \leq n$. A $n$-truncated simplicial set is a contravariant functor from $\bm{\Delta}_{\leq n}$ to the category of sets $\Set$. We denote the category of simplicial sets by $\sSet$ and the category of $n$-truncated simplicial sets by $\sSet_{\leq n}$. Restriction gives a truncation functor  $\tr_n \colon \sSet \to \sSet_{\leq n}$ which admits a right adjoint $\cosk_n \colon \sSet_{\leq n} \to \sSet$ called the $n$-coskeleton functor. By abuse of language, one refers to the composite functor $\Cosk_n = \cosk_n \circ \tr_n$ also as the $n$-coskeleton functor. It can be shown that, given an object $K_{\sbullet}$ in $\sSet$, the unit of the adjunction $\eta_{K} \colon K_{\sbullet} \to \Cosk_n(K_{\sbullet})$ induces a bijection on simplices of dimension $k \leq n$. Furthermore, if $K_{\sbullet}$ and $L_{\sbullet}$ are two objects in $\sSet$ which in addition are Kan complexes, and $f_{\sbullet} \colon K_{\sbullet} \to L_{\sbullet}$ is a morphism that induces an isomorphism in homotopy groups up to degree $k < n$, then the morphism $\Cosk_n(f_{\sbullet}) \colon \Cosk_n(K_{\sbullet}) \to  \Cosk_n(L_{\sbullet})$ is a weak equivalence. For details of these constructions, we refer the reader to \cite{Goerss-Jardine2009,Joyal-Tierney2008,Ruschoff2017}.

\begin{lemma}\label{lem:4.11}
Let $K_{\sbullet}$ and $L_{\sbullet}$ be simplicial sets and let $f_{\sbullet} \colon K_{\sbullet} \to L_{\sbullet}$ be a simplicial map that induces an isomorphism in homotopy groups up to degree $2m$. Then the pullback functor $f_{\sbullet}^* \colon \Rep_{\infty}(L_{\sbullet}) \to \Rep_{\infty}(K_{\sbullet})$ induces a quasi-equivalence between $F^{m}\! \Rep_{\infty}(L_{\sbullet})$ and $F^{m}\!\Rep_{\infty}(K_{\sbullet})$. 
\end{lemma}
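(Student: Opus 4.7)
The plan is to factor $f_{\sbullet}$ through the $(2m+1)$-coskeleton and reduce to an actual weak equivalence, for which Lemma~\ref{LHolstein} applies.  Consider the commutative square of simplicial sets
\[
\xymatrix{ K_{\sbullet}\ar[r]^-{f_{\sbullet}}\ar[d]_-{\eta_K} & L_{\sbullet}\ar[d]^-{\eta_L}\\
\Cosk_{2m+1}(K_{\sbullet})\ar[r]^-{\Cosk_{2m+1}(f_{\sbullet})} & \Cosk_{2m+1}(L_{\sbullet})}
\]
whose vertical arrows are the units of the coskeleton adjunction.  After replacing $K_{\sbullet}$ and $L_{\sbullet}$ by Kan replacements if necessary (which is harmless for $\Rep_{\infty}$ by Lemma~\ref{LHolstein}), the hypothesis that $f_{\sbullet}$ is an isomorphism on $\pi_k$ for $k\leq 2m$, together with the general fact about coskeleta recalled just before the lemma, implies that $\Cosk_{2m+1}(f_{\sbullet})$ is a weak equivalence.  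Hence Lemma~\ref{LHolstein} gives that $\Cosk_{2m+1}(f_{\sbullet})^{*}$ is already a quasi-equivalence on all of $\Rep_{\infty}$, and thus a fortiori on the $F^{m}$ subcategories.

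The core step is then to show that the unit pullbacks $\eta_K^{*}\colon F^{m}\!\Rep_{\infty}(\Cosk_{2m+1}(K_{\sbullet}))\to F^{m}\!\Rep_{\infty}(K_{\sbullet})$ (and analogously for $L_{\sbullet}$) are themselves quasi-equivalences.  The essential observation is that an object $(E,F_{\sbullet})$ of $F^{m}\!\Rep_{\infty}(K_{\sbullet})$ is forced to have $F_p=0$ for every $p>2m+1$, since $F_p(x)\in\Hom(E,E)^{1-p}$ and $\Hom(E,E)^{r}$ vanishes for $|r|>2m$.  Thus such an object is encoded exactly by the finite data $(F_0,\ldots,F_{2m+1})$ on simplices of dimension $\leq 2m+1$, and since $\eta_K$ is a bijection on such simplices, $\eta_K^{*}$ is bijective on the object sets of the $F^{m}$ subcategories, which already gives essential surjectivity.

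For quasi full faithfulness I would invoke the spectral sequence of Lemma~\ref{lem:4.10} applied to both hom complexes.  The relevant $\Ecal_2$-pages are $\uH^{p}(-,\Hcal^{q}(\Hom(E,E')))$ with local coefficient systems concentrated in degrees $|q|\leq 2m$.  Since $\eta_K$ is $(2m+1)$-connected by the coskeletal description of the homotopy groups, a comparison of the normalised cochain complexes with local coefficients, combined with the fact that simplices of $\Cosk_{2m+1}(K_{\sbullet})$ above dimension $2m+1$ are canonically determined by their $(2m+1)$-dimensional boundaries, should produce an isomorphism of $\Ecal_2$-pages; convergence of the spectral sequence then gives a quasi-isomorphism of hom complexes.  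Pasting the outer square together finally forces $f_{\sbullet}^{*}$ to be a quasi-equivalence on the $F^{m}$ subcategories.

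The main obstacle I expect is precisely the last point: establishing the $\Ecal_2$-page isomorphism in all cochain degrees $p$.  Naively, $\eta_K$ being only $(2m+1)$-connected yields the cohomology comparison with local coefficients only for $p\leq 2m$, so that extending it to higher $p$ requires a delicate analysis of how the coskeleton freely fills in higher simplices, together with the normalisation of the cochain complex, to show that the extra cochains contribute only exact terms cohomologically.
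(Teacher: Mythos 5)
Your overall route is the one the paper takes: replace $K_{\sbullet}$ and $L_{\sbullet}$ by Kan complexes via $\Ex^{\infty}$, pass to $(2m+1)$-coskeleta so that the induced map becomes a weak equivalence and Lemma~\ref{LHolstein} applies, and reduce everything to showing that the unit pullbacks $\eta^{*}$ behave well on the $F^{m}$ subcategories. That last step is precisely the one the paper asserts without detail, and both of the justifications you offer for it have genuine problems.

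First, the claim that $\eta_K^{*}$ is bijective on objects of the $F^{m}$ subcategories is false. You are right that $F_p=0$ for $p>2m+1$, so the \emph{data} of an object lives on simplices of dimension $\leq 2m+1$, where $\eta_K$ is a bijection; but the structure equation in degree $p=2m+2$ is not vacuous, since the terms $F_i\abxcup F_{2m+2-i}$ land in $\Hom(E,E)^{-2m}$, which need not vanish. That equation must hold on \emph{every} $(2m+2)$-simplex of $\Cosk_{2m+1}(K_{\sbullet})$, and the coskeleton has many such simplices not in the image of $\eta_K$ (one for each map $\partial\Delta^{2m+2}\to K_{\sbullet}$); on those it is a genuinely new condition, satisfied automatically only when the boundary sphere bounds in $K_{\sbullet}$, i.e.\ the obstruction lives in $\pi_{2m+1}(K_{\sbullet})$. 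Already for $m=0$ the degree-$2$ equation $F_1(d_1^2(x))=F_1(f_2^1(x))\circ F_1(b_2^1(x))$, imposed on all the extra triangles of $\Cosk_1$, forces a local system on the nerve of a group to be essentially trivial, while $F^{0}\!\Rep_{\infty}(K_{\sbullet})$ contains all of them. Second, the obstacle you flag for full faithfulness is real and cannot be analysed away: $\eta_K$ annihilates $\pi_i$ for $i\geq 2m+1$, so the comparison on $\uH^{p}(-,\Hcal^{q}(\Hom(E,E')))$ fails for $p>2m$, and since $q$ ranges only over $[-2m,2m]$, terms with $p>2m$ contribute to $\uH^{n}$ of the Hom complex for every $n\geq 1$. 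What the argument does yield (by a mapping-cone comparison of the two spectral sequences, using that every $\Ecal_2^{p,q}$ with $p+q\leq 0$ has $p\leq 2m$) is an isomorphism on $\uH^{0}$, i.e.\ an equivalence of homotopy categories — which is what the paper's own proof actually reduces to and all that Theorem~B uses. A clean repair of the whole step is to work with $\Cosk_{2m+2}$ instead of $\Cosk_{2m+1}$, so that both the data and the relations of objects lie in the range where $\eta$ is a bijection, at the cost of requiring $f_{\sbullet}$ to be an isomorphism on homotopy groups up to degree $2m+1$; this stronger hypothesis is harmless in the application.
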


\begin{proof}
The proof makes use of Kan's fibrant replacement functor $\Ex^{\infty} \colon \sSet \to \sSet$ (see for instance \S3 of \cite{Jardine1987}).  There are natural maps $\nu_{K} \colon K_{\sbullet} \to \Ex^{\infty}(K_{\sbullet})$ and $\nu_{L} \colon L_{\sbullet} \to \Ex^{\infty}(L_{\sbullet})$ which are weak equivalences. Moreover, as the name suggests, $\Ex^{\infty}(K_{\sbullet})$ and $\Ex^{\infty}(L_{\sbullet})$ are Kan complexes. Consider the following commutative diagram
$$
\xymatrix@C=16ex{K_{\sbullet} \ar[r]^-{f_{\sbullet}} \ar[d]_-{\nu_K}& L_{\sbullet} \ar[d]^-{\nu_L} \\
\Ex^{\infty}(K_{\sbullet})  \ar[d]_-{\eta_{\Ex^{\infty}(K)}}& \Ex^{\infty}(L_{\sbullet}) \ar[d]^-{\eta_{\Ex^{\infty}(L)} }\\
\Cosk_{2m+1}(\Ex^{\infty}(K_{\sbullet})) \ar[r]^{\Cosk_{2m+1}(\Ex^{\infty}(f_{\sbullet}))} & \Cosk_{2m+1}(\Ex^{\infty}(L_{\sbullet})).}
$$
In order to show that $f_{\sbullet}^*$ induces an equivalence on homotopy categories, it suffices to show that the pullback functor associated to all the other maps in the diagram have the corresponding property. Using Lemma~\ref{LHolstein}, we observe that, being weak equivalences, the pullback functors associated to $\nu_K$ and $\nu_L$ induce quasi-equivalences between $\Rep_{\infty}(\Ex^{\infty}(K_{\sbullet}))$ and $\Rep_{\infty}(K_{\sbullet})$,  and between $\Rep_{\infty}(\Ex^{\infty}(L_{\sbullet}))$ and $\Rep_{\infty}(L_{\sbullet})$, respectively. On the other hand, in view of our previous remarks, we know that the maps $\eta_{\Ex^{\infty}(K)}$ and $\eta_{\Ex^{\infty}(L)}$ are bijections on simplices of dimension $k \leq 2m+1$ and therefore their associated pullback functors induce equivalences between the homotopy categories of $F^{m}\!\Rep_{\infty}(\Cosk_{2m+1}(\Ex^{\infty}(K_{\sbullet})))$ and $F^{m}\! \Rep_{\infty}(\Ex^{\infty}(K_{\sbullet}))$,  and those of $F^{m}\!\Rep_{\infty}(\Cosk_{2m+1}(\Ex^{\infty}(L_{\sbullet})))$ and $F^{m}\! \Rep_{\infty}(\Ex^{\infty}(L_{\sbullet}))$, respectively. Besides, by our hypothesis on $f_{\sbullet}$, the map $\Ex^{\infty}(f_{\sbullet})$ induces an isomorphism in homotopy groups up to degree $2m$. One concludes that $\Cosk_{2m+1}(\Ex^{\infty}(f_{\sbullet}))$ is a weak equivalence, using again Lemma \ref{LHolstein}, one concludes that its associated pullback functor induces an equivalence between the homotopy categories of $F^{m}\!\Rep_{\infty}(\Cosk_{2m+1}(\Ex^{\infty}(K_{\sbullet})))$ and $F^{m}\!\Rep_{\infty}(\Cosk_{2m+1}(\Ex^{\infty}(L_{\sbullet})))$. This completes the proof of the lemma.
\end{proof}

Now let $G$ be a compact and simply connected Lie group with Lie algebra $\gfrak$. If $V$ is a $\gfrak$-$\L_{\infty}$ space, we shall write $\Hcal^{\sbullet}(V)$ for the cohomology of the graded vector space $V$ with respect to the zeroth component of its corresponding Maurer-Cartan element $\alpha_V$. A result analogous to Lemmas \ref{lem:4.7} and \ref{lem:4.10} is as follows.

\begin{lemma}\label{lem:4.12}
Let $V$ and $V'$ be two objects in $\InfLoc_{\infty}(\gfrak)$. Then the space of morphisms $(\W\gfrak \otimes \Hom(V,V'))_{\bas}$ carries a canonical decreasing filtration by the degree in the Weil algebra $\W\gfrak$. This filtration induces a spectral sequence with second page $\Ecal_2^{p,q} = \uH^{p}((\W\gfrak)_{\bas}) \otimes \Hcal^{q}(\Hom(V,V'))$ and which converges to $\uH^{p+q}((\W\gfrak \otimes \Hom(V,V'))_{\bas})$. 
\end{lemma}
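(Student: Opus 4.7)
The argument is the standard spectral sequence of a decreasing filtered cochain complex, combined with the fact that the Cartan relation $[i_x,\delta_V]=L_x$ holds on the nose in any $\gfrak$-$\L_{\infty}$ space.

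First I would set $F^p = \bigoplus_{k \geq p}(\W^k\gfrak \otimes \Hom(V,V'))_{\bas}$. This is well-defined since $i_x \otimes 1$ shifts Weil degree by $-1$ while $L_x \otimes 1 + 1 \otimes L_x$ preserves it, so the basic subspace respects the Weil grading. Writing $D = \dW \otimes 1 + 1 \otimes \delta_V + \alpha_V$ and decomposing $\alpha_V = \sum_{k \geq 1}\alpha_V^{(k)}$ by Weil degree (with $\alpha_V^{(0)}=0$ by the convention $\Phi_0 = 0$ for $\L_\infty$-morphisms, and $\alpha_V^{(1)} = t^a \otimes L_a$ coming from the linear part $\Phi_1$), every summand of $\partial_{D,D'}$ shifts Weil degree by a nonnegative integer, so $F^\bullet$ is preserved. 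The Weil-degree-preserving piece $d_0$ is the commutator with $1 \otimes \delta_{V'}$ and $1 \otimes \delta_V$ on the $\Hom$ factor; since $G$ is compact, $\gfrak$-invariants form an exact functor commuting with cohomology, and one obtains
$$
E_1^{p,q} = (\W^p\gfrak \otimes \Hcal^q(\Hom(V,V')))_{\bas}.
$$

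The key computation shows $d_1 = 0$. A class in $E_1^{p,q}$ is represented by a sum of basic simple tensors $\omega \otimes f$ with $\omega \in (\W^p\gfrak)_{\hor} = \uS^{p/2}\gfrak^*$, so $p$ must be even. Applying $\dW w^a = f^{a}_{\phantom{a}bc}\,w^b t^c$ term-by-term yields $\dW \omega = t^a\,L_a \omega$ for any $\omega \in \uS^\bullet\gfrak^*$. Combining this with the contributions of $\alpha_V^{(1)}$ and $\alpha_{V'}^{(1)}$, the Weil-shift-$(+1)$ component of $\partial_{D,D'}(\omega \otimes f)$ collapses to
$$
t^a \otimes (L_a \otimes 1 + 1 \otimes L_a^{\Hom})(\omega \otimes f),
$$
which vanishes by basicness of $\omega \otimes f$. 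Hence $d_1 = 0$ and $E_2 = E_1$.

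To conclude, I invoke the Cartan relation $L_x = [i_x,\delta_V]$ from \S\ref{sec:2.5}, which shows that $L_x$ is null-homotopic on $V$ with respect to $\delta_V$ and therefore acts trivially on $\Hcal^\bullet(V)$; the same holds for $V'$ and hence on $\Hcal^q(\Hom(V,V'))$. As $G$ is connected, the entire $G$-action on $\Hcal^q(\Hom(V,V'))$ is trivial, so the $\gfrak$-invariants split:
$$
(\W^p\gfrak \otimes \Hcal^q(\Hom(V,V')))_{\bas} \cong (\W^p\gfrak)_{\bas} \otimes \Hcal^q(\Hom(V,V')) = \uH^p((\W\gfrak)_{\bas}) \otimes \Hcal^q(\Hom(V,V')),
$$
where the last equality uses that $\dW$ vanishes on $(\W\gfrak)_{\bas}$ (cf.\ \S\ref{sec:2.5}). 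Convergence follows from the standard theorem for decreasing filtered complexes since $F^\bullet$ is exhaustive and bounded below in each total degree. The most delicate step is the Weil-shift-$(+1)$ computation: the cancellation between $\dW\omega$ and the $\alpha^{(1)}$ contributions is the infinitesimal analogue of the horizontality of the Maurer-Cartan form, and it requires careful sign bookkeeping.
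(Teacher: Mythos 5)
Your proposal is correct and follows essentially the same route as the paper's proof: filter by Weil degree, identify the $E_0$ and $E_1$ pages using that basic elements are horizontal (hence land in $\uS^{\sbullet}\gfrak^*$, concentrated in even Weil degree) and that compactness of $G$ lets invariants commute with cohomology, and use the Cartan homotopy $[i_x,\delta_V]=L_x$ to trivialize the action on $\Hcal^{\sbullet}(\Hom(V,V'))$. The only divergence is the vanishing of $d_1$: the paper dismisses it purely for degree reasons (the $E_1$ page lives in even $p$ only, so any map of bidegree $(1,0)$ vanishes), which renders your explicit chain-level cancellation between $\dW\omega=t^aL_a\omega$ and the $\alpha^{(1)}$ contributions correct but unnecessary.
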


\begin{proof}
The zeroth page of the spectral sequence is
$$
\Ecal_0^{p,q} = (\W^p\gfrak \otimes \Hom(V,V')^q)_{\bas} = (\uS^{p/2}\gfrak^* \otimes \Hom(V,V')^q)_{\inv}
$$ 
with differential induced by that of $\Hom(V,V')$. Since $G$ is compact, we know that taking cohomology commutes with taking invariants. Moreover, since $V$ and $V'$ are basic $\gfrak$-$\L_\infty$-spaces, the action of $\gfrak$ on $\Hcal^{\bullet}(\Hom(V,V'))$ is trivial. Therefore,
$$
\Ecal_1^{p,q} = (\uS^{p/2}\gfrak^*)_{\inv} \otimes \Hcal^{q}(\Hom(V,V')) = \uH^{p}((\W\gfrak)_{\bas}) \otimes \Hcal^{q}(\Hom(V,V')).
$$
As for the first page differential, it vanishes for degree reasons. We therefore conclude that the second page of the spectral sequence is $\Ecal_2^{p,q} =  \uH^{p}((\W\gfrak)_{\bas}) \otimes \Hcal^{q}(\Hom(V,V'))$, as asserted.
\end{proof}

It should also be apparent that, just as for the DG categories of $\infty$-local systems and representations up to homotopy, there is a canonical filtration $F^{m}\!\InfLoc_{\infty}(\gfrak)$ of $\InfLoc_{\infty}(\gfrak)$ consisting of all $\gfrak$-$\L_{\infty}$ spaces $V$ for which $V^{k} = 0$ for all $k \in \ZZ$ such that $\vert k \vert > m$. This makes $\InfLoc_{\infty}(\gfrak)$ into a filtered DG category. 

There is one more result that will be useful as we go forward.

\begin{lemma}\label{lem:4.13}
Let $G$ be a compact connected Lie group, $\pi \colon P \to X$ be a principal $G$-bundle and $\theta$  a connection on $P$. Then, for any pair of objects $V$ and $V'$ of $\InfLoc_{\infty}(\gfrak)$, the morphism of cochain complexes
$$
\CW_{\theta} \colon (\W\gfrak \otimes \Hom(V,V'))_{\bas} \longrightarrow \Omega^{\sbullet}(X,\Hom(\CW_{\theta}(V),\CW_{\theta}(V')))
$$
determined by the Chern-Weil DG functor of $P$, is compatible with the filtrations described in Lemmas \ref{lem:4.7} and \ref{lem:4.12}, and induces a morphism of spectral sequences. Moreover, if $X$ is simply connected, this morphism is identified with the Chern-Weil homomorphism of $P$ tensored with the identity on $\Hcal^{\sbullet}(\Hom(V,V'))$. 
\end{lemma}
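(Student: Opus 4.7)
The plan is to verify the three assertions in sequence, each of which follows from a careful reading of how the Chern-Weil DG functor acts on morphisms. For filtration compatibility, recall that $\CW_\theta$ sends a morphism $\varphi \in (\W\gfrak \otimes \Hom(V,V'))_{\bas}$ to $(c_\theta \otimes 1)\varphi \in \Omega^{\sbullet}_{\bas}(P,\Hom(V,V'))$, which is then identified with a form on $X$ through the isomorphism $\pi^{\#}$ of Proposition~\ref{prop:3.1}. From the explicit description of $c_\theta$ on generators ($c_\theta(t^a) = \theta^a$ and $c_\theta(w^a) = \Omega^a - \tfrac{1}{2}f^a_{\phantom{a}bc}\theta^b \theta^c$), combined with the multiplicativity of $c_\theta$, one sees directly that $c_\theta(\W^p\gfrak) \subseteq \Omega^p(P)$. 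Since $\pi^{\#}$ preserves partial degree, this shows that the filtration $F^p(\W\gfrak \otimes \Hom(V,V'))_{\bas}$ by Weil degree is mapped into the filtration $F^p\Omega^{\sbullet}(X,\Hom(\CW_\theta V,\CW_\theta V'))$ by partial form degree. A filtration-preserving chain map automatically induces a morphism between the associated spectral sequences, which settles the second claim.

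For the identification at $E_2$ under the assumption that $X$ is simply connected, I would first note that, by construction, the zeroth partial-degree component of the superconnection $D_\theta$ defining $\CW_\theta V$ is exactly the zeroth Weil-degree component of $\alpha_V$, which is fiberwise the differential defining $\Hcal^{\sbullet}(V)$. The same applies to $V'$. Consequently, the fiber of the graded cohomology bundle $\Hcal^q(\Hom(\CW_\theta V,\CW_\theta V'))$ is canonically identified with $\Hcal^q(\Hom(V,V'))$. Since this bundle carries the flat connection provided by Lemma~\ref{lem:4.5} and $\pi_1(X) = 0$, the flat bundle has trivial monodromy and is therefore trivializable as a flat bundle. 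It follows that
\[
\uH^p(X,\Hcal^q(\Hom(\CW_\theta V,\CW_\theta V'))) \cong \uH^p_{\DR}(X) \otimes \Hcal^q(\Hom(V,V')).
\]

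Unwinding the identifications on $E_1$ gives the morphism
\[
(\uS^{p/2}\gfrak^*)_{\inv} \otimes \Hcal^q(\Hom(V,V')) \longrightarrow \Omega^p(X) \otimes \Hcal^q(\Hom(V,V')),\quad P \otimes [f] \longmapsto c_\theta(P) \otimes [f].
\]
Passing to cohomology along the de Rham differential yields $P \otimes [f] \mapsto c_{\theta*}(P) \otimes [f]$; that is, the induced map at $E_2$ is $c_{\theta*} \otimes \id_{\Hcal^{\sbullet}(\Hom(V,V'))}$, which is the asserted identification with the Chern-Weil homomorphism tensored with the identity.

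The principal obstacle is bookkeeping at the $E_2$ step: one needs to confirm that the fiberwise cohomology $\Hcal^q(\Hom(\CW_\theta V,\CW_\theta V'))$, with its canonical flat connection, really does correspond to the trivial local system with fiber $\Hcal^q(\Hom(V,V'))$ when $X$ is simply connected, and that the induced map on $E_1$-pages is given factorwise by $c_\theta$ on the invariant polynomial part. Once these identifications are established, the remaining verifications — that $c_\theta$ is a cochain map compatible with the filtrations and that induced maps of spectral sequences respect the $E_2$-presentations obtained in Lemmas~\ref{lem:4.7} and~\ref{lem:4.12} — are formal consequences of the definitions.
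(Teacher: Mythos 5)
Your proof is correct and follows essentially the same route as the paper's: filtration compatibility from the fact that $c_{\theta}$ preserves degree (sending $\W^{p}\gfrak$ to $p$-forms), triviality of the fiberwise cohomology bundle together with trivial monodromy of its flat connection on the simply connected base, and identification of the induced maps on the first and second pages as $c_{\theta} \otimes \id$ and $c_{\theta*} \otimes \id$ respectively. (One harmless slip: with the paper's conventions $c_{\theta}(w^{a}) = d\theta^{a} + \tfrac{1}{2} f^{a}_{\phantom{a}bc}\theta^{b}\theta^{c} = \Omega^{a}$, not $\Omega^{a} - \tfrac{1}{2} f^{a}_{\phantom{a}bc}\theta^{b}\theta^{c}$; this does not affect the degree count.)
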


\begin{proof}
The first part follows immediately from the definition of the Chern-Weil DG functor of $P$. To prove the second part we note that, since $G$ acts trivially on cohomology, the graded vector bundle $\Hcal^{\sbullet}(\Hom(\CW_{\theta}(V),\CW_{\theta}(V')))$ is trivial, so if we look at the first pages of the associated spectral sequences, we get the morphism
$$
\CW_{\theta} \colon \uH^{p}((\W\gfrak)_{\bas}) \otimes \Hcal^{q}(\Hom(V,V')) \longrightarrow \Omega^{p}(X,\Hcal^{q}(\Hom(V,V'))),
$$
and this is simply the tensor product of the characteristic homomorphism of $P$ with the identity on $\Hcal^{q}(\Hom(V,V'))$. Furthermore, by the hypothesis that $X$ is simply connected, we have that the graded vector bundle $\Hcal^{\sbullet}(\Hom(\CW_{\theta}(V),\CW_{\theta}(V')))$ is trivialised by the flat connection induced on it, so that the induced morphism between the second pages of the spectral sequences is
$$
\CW_{\theta} \colon \uH^{p}((\W\gfrak)_{\bas}) \otimes \Hcal^{q}(\Hom(V,V')) \longrightarrow \uH^{p}_{\DR}(X) \otimes \Hcal^{q}(\Hom(V,V')),
$$
and it is given by the tensor product of the the Chern-Weil homomorphism of $P$ with the identity on $\Hcal^{q}(\Hom(V,V'))$, as claimed.
\end{proof}

We close by mentioning that a construction analogous to that of an extension of an $\infty$-local system can be carried out for basic $\gfrak$-$\L_{\infty}$ spaces. Let us write $\RR$ for the trivial representation of $\TT\gfrak$. We further let $V$ be a basic $\gfrak$-$\L_{\infty}$ space and $\gamma$ be a closed element of $(\W\gfrak \otimes \Hom(V,\RR))_{\bas}$ of degree $l$. Then the extension of $V$ by $\gamma$, denoted by $V \rtimes_\gamma \RR$, is the basic $\gfrak$-$\L_{\infty}$ space whose underlying graded vector space is the direct sum $V \oplus \RR[1-l]$ and whose corresponding Maurer-Cartan element is $\alpha_{V} + \gamma$. 


\subsection{The Weil $\A_{\infty}$-functor for classifying spaces}
In this subsection we prove Theorem~B, which provides an infinitesimal description of the category of $\infty$-local systems on the classifying space $BG$ of a compact connected Lie group $G$ with Lie algebra $\gfrak$. First, let us recall our situation. 

We consider a compact and simply connected Lie group $G$ with Lie algebra $\gfrak$. For nonnegative integers $k \leq n$, we let $\pi \colon \V_k(\CC^n) \to BG_n$ be the principal bundle with structure group $G$ described in \S\ref{sec:4.1}. The canonical connection on $\V_k(\CC^n)$ constructed in Proposition~\ref{prop:4.2} we denote simply by $\omega$  in this discussion. We recall that it satisfies $j^* \omega = \omega$, where $j \colon \V_k(\CC^{n}) \to \V_k(\CC^{n+1})$ is the natural inclusion. We also write $\varphi_n \colon BG_n \to BG$ for the canonical map obtained from the direct limit construction. 

We will now proceed to construct the $\A_{\infty}$-functor that connects the DG categories $\InfLoc_{\infty}(\gfrak)$ and $\Loc_{\infty}(BG)$. By Theorem~A, we know that for each $n \geq k$, there is a Chern-Weil DG functor $\CW_{\omega}^{(n)} \colon \InfLoc_{\infty}(\gfrak) \to \Loc_{\infty}(BG_n)$. Because of the naturallity of the connection $\omega$, these DG functors fit into a commutative diagram
$$
\xymatrix{\Loc_{\infty}(BG_{n}) & & \Loc_{\infty}(BG_{n+1}) \ar[ll]_-{i^*}  \\  & \InfLoc_{\infty}(\gfrak) \ar[lu]^-{\CW_{\omega}^{(n)}}\ar[ru]_-{\CW_{\omega}^{(n+1)}}}
$$
where $i \colon BG_{n} \to BG_{n+1}$ is the natural inclusion. On the other hand, by the higher Riemann-Hilbert correspondence, for each $n$, there is an integration $\A_{\infty}$-functor $\Ical^{(n)} \colon \Loc_{\infty}(BG_n) \to \Rep_{\infty}(\pi_{\infty} BG_{n \sbullet})$, which is in addition an $\A_{\infty}$-quasi-equivalence. On the other hand, invoking higher Riemann-Hilbert correspondence, for each $n$, there is an integration $\A_{\infty}$-functor $\Ical^{(n)} \colon \Loc_{\infty}(BG_n) \to \Rep_{\infty}(\pi_{\infty} BG_{n \sbullet})$, which is in addition an $\A_{\infty}$-quasi-equivalence. By the naturality of the construction, all these $\A_{\infty}$-functors fit into the following commutative diagram:
$$
\xymatrix{ \Rep_{\infty}(\pi_{\infty} BG_{n \sbullet}) & \Rep_{\infty}(\pi_{\infty} BG_{n+1\sbullet}). \ar[l]_-{i_{\sbullet}^*} \\
\Loc_{\infty}(BG_{n}) \ar[u]^-{\Ical^{(n)}} & \Loc_{\infty}(BG_{n+1}) \ar[u]_-{\Ical^{(n+1)}} \ar[l]_-{i^*}}
$$
For each $n \geq 0$, we let $\Wcal^{(n)} \colon \InfLoc_{\infty}(\gfrak) \to \Rep_{\infty}(\pi_{\infty} BG_{n \sbullet})$ be the $\A_{\infty}$-functor defined as the composition
$$
\Wcal^{(n)} = \Ical^{(n)} \circ \CW_{\omega}^{(n)}.
$$
It follows from the above discussion that these $\A_{\infty}$-functors fit into a commutative diagram 
$$
\xymatrix{\Rep_{\infty}(\pi_{\infty}BG_{n \sbullet}) & & \Rep_{\infty}(\pi_{\infty}BG_{n+1 \sbullet}) \ar[ll]_-{i_{\sbullet}^*}  \\  & \InfLoc_{\infty}(\gfrak). \ar[lu]^-{\Wcal^{(n)}}\ar[ru]_-{\Wcal^{(n+1)}}}
$$
Taking note of the definition of the DG category $\Loc_{\infty}(BG)$ given at the end of \S\ref{sec:4.2}, and recalling that, for each $n \geq 0$, integrating an $\infty$-local system on $BG_n$ amounts to assigning holonomies to smooth maps from the standard simplices to $BG_n$, we deduce the existence of an $\A_{\infty}$-functor
$$
\Wcal \colon \InfLoc_{\infty}(\gfrak) \longrightarrow \Loc_{\infty}(BG).
$$ 
Our goal is to show that $\Wcal$ is in fact an $\A_{\infty}$-quasi-equivalence. The following two preliminary results will clear our path. 

\begin{proposition}\label{prop:4.14}
Let  $V$ and $V'$ be two objects in $F^{m} \! \InfLoc_{\infty}(\gfrak)$ and put $N = \frac{1}{2}(2k + l + 6m)$. Then, for each $n \geq N$, the morphism of cochain complexes
$$
\CW_{\omega}^{(n)} \colon (\W\gfrak \otimes  \Hom(V,V'))_{\bas} \longrightarrow \Omega^{\sbullet}(BG_n,\Hom(\CW_{\omega}^{(n)}(V), \CW_{\omega}^{(n)}(V')))
$$
induces an isomorphism in cohomology up to degree $l$. 
\end{proposition}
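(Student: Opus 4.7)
My plan is to compare the two spectral sequences from Lemmas~\ref{lem:4.7} and \ref{lem:4.12} using the compatibility provided by Lemma~\ref{lem:4.13}.

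To invoke Lemma~\ref{lem:4.13} I first need $BG_n$ to be simply connected. This follows from the homotopy long exact sequence of the principal bundle $G \to \V_k(\CC^n) \to BG_n$, since $G$ is assumed simply connected and $\V_k(\CC^n)$ is $2(n-k)$-connected (by the iterated sphere fibration structure $S^{2(n-k)+1} \to \V_k(\CC^n) \to \V_{k-1}(\CC^n)$). Thus Lemma~\ref{lem:4.13} identifies the induced $E_2$ map with the tensor product of the Chern-Weil homomorphism $c_{\omega*}^{(n)} \colon (\uS^{\sbullet}\gfrak^*)_{\inv} \to \uH^{\sbullet}_{\DR}(BG_n)$ and the identity on $\Hcal^{q}(\Hom(V,V'))$; here I use that $\dW$ vanishes on $(\W\gfrak)_{\bas}$, so that $\uH^{\sbullet}((\W\gfrak)_{\bas}) = (\uS^{\sbullet}\gfrak^*)_{\inv}$. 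Moreover, the hypothesis $V,V' \in F^{m}\!\InfLoc_{\infty}(\gfrak)$ forces $\Hcal^{q}(\Hom(V,V')) = 0$ for $|q| > 2m$, so both spectral sequences are concentrated in a horizontal strip of height $4m$.

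Next, I control the range in which $c_{\omega*}^{(n)}$ is an isomorphism. By the naturality of characteristic classes applied to the classifying map $\varphi_n \colon BG_n \to BG$, one has the factorization $c_{\omega*}^{(n)} = \varphi_n^{*} \circ c_{\mathrm{univ}*}$, where $c_{\mathrm{univ}*} \colon (\uS^{\sbullet}\gfrak^*)_{\inv} \to \uH^{\sbullet}(BG)$ is the classical Chern-Weil isomorphism for a compact connected Lie group. The $2(n-k)$-connectivity of $\V_k(\CC^n)$ makes $\varphi_n$ highly connected (another LES argument), so by the Whitehead--Hurewicz comparison theorem $\varphi_n^{*}$ is an isomorphism on $\uH^{p}$ in degrees $p \leq 2(n-k)$. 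Consequently $c_{\omega*}^{(n)}$ is an isomorphism in the same range.

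To finish, I will apply the comparison theorem for convergent filtered spectral sequences. Because both sequences are supported in $|q| \leq 2m$, all differentials $d_r$ vanish for $r > 4m + 1$, and every bidegree $(p',q')$ that can influence $E_{\infty}^{p,q}$ with $p+q \leq l$ satisfies $p' \leq l + 6m$. For $n \geq N = \tfrac{1}{2}(2k + l + 6m)$ one has $2(n-k) \geq l + 6m$, so $c_{\omega*}^{(n)}$ is an isomorphism throughout the relevant region of the $E_2$ page; the comparison theorem then forces the map on total cohomology to be an isomorphism in degrees $\leq l$. The main obstacle is the careful numerical bookkeeping: aligning the $2(n-k)$ connectivity estimate for $\V_k(\CC^n)$ with the $6m$-spread due to the spectral-sequence differentials is precisely what the bound $N$ is rigged to accommodate.
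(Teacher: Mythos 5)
Your proposal is correct and follows essentially the same route as the paper: both compare the spectral sequences of Lemmas~\ref{lem:4.7} and \ref{lem:4.12} via Lemma~\ref{lem:4.13}, use the $2(n-k)$-connectivity of $\V_k(\CC^n)$ to control the range where the Chern--Weil map is an isomorphism on the second page, and exploit the width-$4m$ strip to bound the pages on which differentials can act. The only cosmetic differences are that you invoke a general comparison theorem where the paper runs an explicit induction on the page number $r$, and that for the simple connectivity of $BG_n$ the long exact sequence really only needs $\pi_0(G)=\ast$ together with $\pi_1(\V_k(\CC^n))=0$, not simple connectivity of $G$.
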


\begin{proof}
To start with, since $n \geq N$, it follows that $\V_{k}(\CC^{n})$ is $2(N-k)$-connected, see for instance \cite{Hatcher}  page 382. As a consequence, employing the long exact homotopy sequence, we deduce that the canonical map $\varphi_n \colon BG_n \to BG$ induces an isomorphism in homotopy groups up to degree $2(N-k)$. This in turn implies that the characteristic map $c_{\omega}\colon (\W\gfrak)_{\bas} \to \Omega^{\sbullet}(BG_n)$ induces an isomorphism in cohomology up to degree $2(N-k)$. On the other hand, by virtue of Lemma~\ref{lem:4.13}, the morphism of cochain complexes $\CW_{\omega}^{(n)}$ is compatible with the canonical decreasing filtrations on $(\W\gfrak \otimes  \Hom(V,V'))_{\bas}$ and $\Omega^{\sbullet}(BG_n,\Hom(\CW_{\omega}^{(n)}(V), \CW_{\omega}^{(n)}(V')))$, and induces a morphism between the corresponding spectral sequences, whose $r$th pages we write respectively as $\Ecal^{p,q}_r$ and $\Ecal'^{p,q}_r$. Moreover, since $BG_n$ is simply connected, this morphism is identified with the Chern-Weil homomorphism of $\V_k(\CC^n)$ tensored with the identity of $\Hcal^{\sbullet}(\Hom(V,V'))$. These facts, together with Lemmas~\ref{lem:4.7} and \ref{lem:4.12}, imply that the induced morphism between the second pages of the spectral sequences 
$$
\CW_{\omega}^{(n)} \colon \Ecal^{p,q}_2 \longrightarrow \Ecal'^{p,q}_2
$$ 
is an isomorphism for all $p \leq 2(N-k)$. But, by our hypothesis on $V$ and $V'$, it results that $\Hcal^{q}(\Hom(V,V')) = 0$ if $q + 2m < 0$. Thus, the above induced morphism is an isomorphism if $p+q \leq 2(N-k)-2m$. From this, by  induction on $r$, we infer that the induced morphism between the $r$th pages of the spectral sequences 
$$
\CW_{\omega}^{(n)} \colon \Ecal^{p,q}_r \longrightarrow \Ecal'^{p,q}_r
$$ 
is an isomorphism if $p+q \leq 2(N-k)-2m - r + 2$. In particular, if we take $r = 4m + 2$, then the induced morphism
$$
\CW_{\omega}^{(n)} \colon \Ecal^{p,q}_{4m+2} \longrightarrow \Ecal'^{p,q}_{4m+2}
$$
is an isomorphism if $p+q \leq 2(N-k)-6m = l$. At the same time, under this assumption on $p+q$, the $r$th page differentials vanish for all $r \geq 4m + 2$. We therefore conclude that the induced morphism between the limiting spectral sequences
$$
\CW_{\omega}^{(n)} \colon \Ecal^{p,q}_{\infty} \longrightarrow \Ecal'^{p,q}_{\infty}
$$
is an isomorphism if $p+q \leq l$. Appealing again to Lemmas~\ref{lem:4.7} and \ref{lem:4.12}, we get the desired conclusion.
\end{proof}

\begin{proposition}\label{prop:4.15}
Given $m \geq 0$, there exists a sufficiently large integer $n > 0$ such that the corresponding Chern-Weil DG funtor $\CW_{\omega}^{(n)} \colon \InfLoc_{\infty}(\gfrak) \to \Loc_{\infty}(BG_n)$ induces a quasi essentially surjective DG functor between $F^{m} \! \InfLoc_{\infty}(\gfrak)$ and $F^{m} \! \Loc_{\infty}(BG_n)$.
\end{proposition}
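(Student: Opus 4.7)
The plan is to combine a canonical filtration of $\infty$-local systems on $BG_n$ by partial degree with the extension lemmas \ref{lem:4.8} and \ref{lem:4.9}, and to realize the successive extension classes by closed infinitesimal elements using Proposition \ref{prop:4.14}.

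First, given $(E,D) \in F^{m} \! \Loc_{\infty}(BG_n)$, Lemma \ref{lem:4.6} allows me to replace $(E,D)$ by a quasi-isomorphic object with $\alpha_0 = 0$, so that $D = d_{\nabla} + \alpha_2 + \alpha_3 + \cdots$. Since each $\alpha_j$ shifts the internal $\ZZ$-grading of $E$ by $1-j \leq -1$, the subbundles $F_p E := \bigoplus_{k \leq p} E^{k}$ are $D$-invariant, yielding a filtration $0 = F_{-m-1}E \subset F_{-m} E \subset \cdots \subset F_{m} E = E$ by sub-$\infty$-local systems. Dually, the quotient $E/F_{p-1}E$ contains $E^{p}$ (concentrated in degree $p$) as a sub-$\infty$-local system, with quotient $E/F_p E$. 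Since $BG_n$ is simply connected for $n$ large enough, every such $E^p$ is a trivial local system $\underline{\RR}^{r_p}$ placed in degree $p$. Consequently, $E/F_{p-1}E$ is an extension of the form $(E/F_p E) \rtimes_{\xi_p} \underline{\RR}^{r_p}$ in the sense of Lemma \ref{lem:4.8}, for a closed element $\xi_p$ of total degree $p+1$ in $\Omega^{\sbullet}(BG_n, \Hom(E/F_p E, \underline{\RR}^{r_p}))$.

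I then proceed by downward induction on $p$, from $p = m+1$ down to $p = -m$, to construct for each $p$ a basic $\gfrak$-$\L_{\infty}$ space $V^{(p)} \in F^{m} \! \InfLoc_{\infty}(\gfrak)$ together with a quasi-isomorphism $\Phi_p \colon \CW_\omega^{(n)}(V^{(p)}) \to E/F_{p-1}E$. The base case $p = m+1$ is trivial, as $E/F_m E = 0$ and $V^{(m+1)} = 0$. For the inductive step, Lemma \ref{lem:4.8} transfers the extension class along $\Phi_p$ to produce a quasi-isomorphism from $\CW_\omega^{(n)}(V^{(p)}) \rtimes_{\xi_p \circ \Phi_p} \underline{\RR}^{r_p}$ to $E/F_{p-1}E$. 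Provided $n \geq N(m)$ for a bound $N(m)$ determined by Proposition \ref{prop:4.14}, the Chern-Weil morphism from $(\W\gfrak \otimes \Hom(V^{(p)},\RR^{r_p}))_{\bas}$ to $\Omega^{\sbullet}(BG_n, \Hom(\CW_\omega^{(n)}(V^{(p)}), \underline{\RR}^{r_p}))$ is a cohomology isomorphism through the degree $p+1$ at which $\xi_p \circ \Phi_p$ lives, so I find a closed element $\gamma_p \in (\W\gfrak \otimes \Hom(V^{(p)},\RR^{r_p}))_{\bas}$ and a primitive $\eta$ such that $\CW_\omega^{(n)}(\gamma_p) = \xi_p \circ \Phi_p + D\eta$. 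Lemma \ref{lem:4.9} then yields a quasi-isomorphism between $\CW_\omega^{(n)}(V^{(p)}) \rtimes_{\xi_p \circ \Phi_p} \underline{\RR}^{r_p}$ and $\CW_\omega^{(n)}(V^{(p)}) \rtimes_{\CW_\omega^{(n)}(\gamma_p)} \underline{\RR}^{r_p}$, which by naturality of the extension construction equals $\CW_\omega^{(n)}(V^{(p)} \rtimes_{\gamma_p} \RR^{r_p})$. Setting $V^{(p-1)} := V^{(p)} \rtimes_{\gamma_p} \RR^{r_p}$ closes the induction, and $V := V^{(-m)}$ is the desired preimage.

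The main obstacle is to choose $n$ uniformly large enough for all of the at most $2m+2$ inductive steps to go through with a single value of $n$. Since both $V^{(p)}$ and $\RR^{r_p}$ remain in $F^{m}\! \InfLoc_{\infty}(\gfrak)$ throughout and the relevant degree $p+1$ of $\xi_p$ is bounded by $m+1$, Proposition \ref{prop:4.14} supplies a uniform bound $N(m) = \lceil (2k + (m+1) + 6m)/2 \rceil$ depending only on $k$ and $m$. A secondary technicality is the verification that the Chern-Weil DG functor intertwines the extension construction in $\InfLoc_{\infty}(\gfrak)$ with that in $\Loc_{\infty}(BG_n)$; this is routine from the explicit formula $D_\theta = d_P \otimes 1 + 1 \otimes \dV + (c_\theta \otimes 1) \alpha_V$ together with the fact that $c_\theta$ is a $\gfrak$-DG algebra homomorphism, which ensures that $c_\theta$ applied to the Maurer-Cartan element of $V \rtimes_\gamma \RR$ reproduces the extension superconnection.
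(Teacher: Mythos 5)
Your proposal is correct and follows essentially the same route as the paper: reduce to $\alpha_0=0$ via Lemma~\ref{lem:4.6}, peel off the lowest internal-degree pieces as extensions, transfer the extension classes along the inductively constructed quasi-isomorphisms with Lemma~\ref{lem:4.8}, lift them to closed basic elements in the range guaranteed by Proposition~\ref{prop:4.14}, and correct by exact terms with Lemma~\ref{lem:4.9}; the paper merely organizes the induction on $\dim W$, stripping one-dimensional subspaces of the bottom degree rather than whole graded pieces $E^{p}$, which also keeps everything within the literal rank-one scope of Lemmas~\ref{lem:4.8} and~\ref{lem:4.9}. One bookkeeping slip worth fixing: since the added piece $E^{p}$ sits in internal degree $p$, the extension class $\xi_p$ has total degree $1-p$ (not $p+1$) in the unshifted $\Hom$-complex --- harmless for your argument, because $\max_{-m\le p\le m}(1-p)=m+1$ is exactly the uniform bound you feed into Proposition~\ref{prop:4.14}.
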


\begin{proof}
Pick $n$ sufficiently large so that $\V_k(\CC^{n})$ is $2m$-connected and $BG_n$ is simply connected. We must show that for any object $(E,D)$ of $F^{m} \! \Loc_{\infty}(BG_n)$, an object $V$ of $F^{m} \! \InfLoc_{\infty}(\gfrak)$ can be  found such that $(E,D)$ is quasi-isomorphic to $\CW_{\omega}^{(n)}(V)$. Thus, let us fix such an object $(E,D)$. Notice that, by Lemma~\ref{lem:4.6}, we may assume that $(E,D)$ is in fact an object  of $F^{m} \! \Loc_{\infty}^{0}(BG_n)$. Moreover, since $BG_n$ is simply connected, we may further assume that $(E,D)$ is trivialized over $BG_n$, that is to say, $E = BG_n \times W$ for some graded vector space $W = \bigoplus_{i = p}^{q} W^{i}$ with $-m \leq p \leq q \leq m$, and $D = d + \alpha$ for some Maurer-Cartan element $\alpha \in \Omega^{\sbullet}(BG_n, \End(W))$. We will therefore argue by induction on the dimension of $W$. If this dimension is equal to $0$, the result clearly holds. The result also holds if $p = q$, since in this case $(E,D)$ is isomorphic to the constant $\infty$-local system $(BG_n \times \RR, d)$. Suppose, then, that $p < q$ and that the result is true for all objects of $F^{m} \! \Loc_{\infty}^{0}(BG_n)$ arising from a graded vector space of dimension less than the dimension of $W$. Pick a nonzero vector in $W^{p}$ and consider the one-dimensional subspace $U$ spanned by this vector. We also fix an inner product on $W^{p}$ and write $U^{\perp}$ for the orthogonal complement of $U$ in $W^{p}$. Set $W' = U^{\perp} \oplus \left( \bigoplus_{i = p+1}^{q} W^{i}\right)$ so that $W = U \oplus W'$. Then the Maurer-Cartan element $\alpha$ can be decomposed as $\alpha = \alpha' + \xi$, where $\alpha' \in \Omega^{\sbullet}(BG_n, \End(W'))$ and $\xi \in \Omega^{\sbullet}(BG_n,\Hom(W',U))$ are homogeneous elements of total degree $1$. In terms of this decomposition, the Maurer-Cartan equation for $\alpha$ becomes 
$$
0 = d \alpha + \alpha \wedge \alpha = d \alpha' + d \xi + \alpha' \wedge \alpha' + \xi \wedge \alpha',
$$
which can be decoupled into two independent equations
\begin{align*}
d \alpha' + \alpha' \wedge \alpha' &= 0,  \\
d \xi + \xi \wedge \alpha' &= 0.
\end{align*}
Hence, if we put $E' = BG_n \times W'$ and $D' = d + \alpha'$, the first of these equations implies that $(E',D')$ defines an object of $F^{m}\! \Loc_{\infty}(BG_n)$. And since the dimension of $W'$ is less than the dimension of $W$, our induction hypothesis ensures the existence of an object $V'$ of $F^{m}\!  \InfLoc_{\infty}(\gfrak)$ together with a quasi-isomorphism $\Phi$ from $\CW_{\theta}^{(n)}(V')$ onto $(E',D')$. In addition to this, it is clear from the construction above that there is an isomorphism of $\infty$-local systems from $(E,D)$ onto the central extension $(E',D') \rtimes_{\xi} \underline{\RR}$. Therefore, by applying Lemma~\ref{lem:4.8}, we determine the existence of an isomorphism of $\infty$-local systems from $(E,D)$ onto $\CW_{\omega}^{(n)}(V') \rtimes_{\xi \circ \Phi} \underline{\RR}$. On the other hand, because $V'$ is an object of $F^{m}\!  \InfLoc_{\infty}(\gfrak)$, by virtue of Proposition~\ref{prop:4.14}, we know that, if $n \geq \frac{1}{2}(2k + 1 - p + 6m)$, the morphism of cochain complexes
$$
\CW_{\omega}^{(n)} \colon (\W\gfrak \otimes \Hom(V',\RR))_{\bas} \longrightarrow \Omega^{\sbullet}(BG_n, \Hom(\CW_{\omega}^{(n)}(V'), \underline{\RR}))
$$
induces an isomorphism in cohomology up to degree $1-p$. This means, in particular, that we can find a closed element $\gamma$ of $(\W\gfrak \otimes \Hom(V',\RR))_{\bas}$ of degree $1$ such that
$$
\CW_{\omega}^{(n)}(\gamma) = \xi \circ \Phi + D\eta,
$$
for some homogeneous element $\eta$ of $\Omega^{\sbullet}(BG_n, \Hom(\CW_{\omega}^{(n)}(V'), \underline{\RR}))$ of degree $0$. Therefore, combining the foregoing with Lemma~\ref{lem:4.9}, we obtain an isomorphism of $\infty$-local systems from $(E,D)$ onto $\CW_{\omega}^{(n)}(V') \rtimes_{\CW_{\omega}^{(n)}(\gamma)} \underline{\RR}$. But it is not hard to check that $\CW_{\omega}^{(n)}(V') \rtimes_{\CW_{\omega}^{(n)}(\gamma)} \underline{\RR}$, as an object of  $F^{m}\! \Loc_{\infty}(BG_n)$, is isomorphic to $\CW_{\omega}^{(n)}(V' \rtimes_{\gamma} \RR)$, where $V' \rtimes_{\gamma} \RR$ is the extension of $V'$ by $\gamma$. Hence the desired conclusion follows by taking $V = V' \rtimes_{\gamma} \RR$.  
\end{proof}

Now we are in a position to state and prove the general theorem we have been looking for.

\begin{theoremB}
Given a compact connected Lie group $G$, the $\A_{\infty}$-functor 
$$
\Wcal \colon \InfLoc_{\infty}(\gfrak) \longrightarrow \Loc_{\infty}(BG)
$$
is an $\A_{\infty}$-quasi-equivalence. Moreover, for any principal $G$-bundle $\pi \colon P \to X$ with connection $\theta$  and classifying map $f \colon X \to  BG_n$, there there exists an $\A_{\infty}$-natural isomorphism between the $\A_{\infty}$-functors $\Ical \circ \CW_{\theta}$ and $(\varphi_n \circ f)_{\sbullet}^* \circ \Wcal$ from $ \InfLoc_{\infty}(\gfrak)$ to $\Rep_{\infty}(\pi_{\infty}X_{\sbullet})$. Here $\Ical$ is the integration $\A_\infty$-functor provided by the higher Riemann-Hilbert correspondence, and $\varphi_n$ is the canonical map from $BG_n$ to $BG$.
\end{theoremB}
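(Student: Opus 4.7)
The plan is to exploit the filtrations $F^{m}\!\InfLoc_{\infty}(\gfrak)$, $F^{m}\!\Rep_{\infty}(\pi_{\infty}BG_{n\sbullet})$ and $F^{m}\!\Loc_{\infty}(BG)$ introduced in the auxiliary subsection, and to reduce the global statement to a collection of filtered quasi-equivalences indexed by $m$ and $n$. A preliminary observation is that both $\CW_{\omega}^{(n)}$ and $\Ical^{(n)}$ preserve these filtrations, so $\Wcal^{(n)} = \Ical^{(n)} \circ \CW_{\omega}^{(n)}$ and the assembled $\A_{\infty}$-functor $\Wcal$ restrict to functors between the respective filtered pieces.

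To establish that $\Wcal$ is quasi fully faithful, I would fix $V, V' \in F^{m}\!\InfLoc_{\infty}(\gfrak)$ and a target cohomological degree $l$, and choose $n$ large enough so that three conditions hold simultaneously: Proposition \ref{prop:4.14} applies and guarantees that $\CW_{\omega}^{(n)}$ induces an isomorphism in cohomology of the Hom complex up to degree $l$; the Stiefel manifold $\V_{k}(\CC^{n})$ is $2m$-connected, so that by the long exact homotopy sequence $\varphi_{n} \colon BG_{n} \to BG$ induces an isomorphism on homotopy groups up to degree $2m$; and hence by Lemma \ref{lem:4.11} the pullback $\varphi_{n\sbullet}^{*}$ restricts to a quasi-equivalence $F^{m}\!\Loc_{\infty}(BG) \to F^{m}\!\Rep_{\infty}(\pi_{\infty}BG_{n\sbullet})$. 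Since by construction $\Wcal^{(n)} = \varphi_{n\sbullet}^{*} \circ \Wcal$ and $\Ical^{(n)}$ is already a quasi-equivalence by Theorem \ref{thm:2.4}, composing these three facts gives an isomorphism on $l$-th cohomology of the Hom complexes, and letting $l$ vary yields quasi fully faithfulness.

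For quasi essential surjectivity I would take any $W \in \Loc_{\infty}(BG)$, which lies in some $F^{m}\!\Loc_{\infty}(BG)$, and pick $n$ sufficiently large that both Proposition \ref{prop:4.15} and Lemma \ref{lem:4.11} apply at level $m$. Proposition \ref{prop:4.15} combined with the quasi-equivalence $\Ical^{(n)}$ produces $V \in F^{m}\!\InfLoc_{\infty}(\gfrak)$ with $\Wcal^{(n)}(V) \cong \varphi_{n\sbullet}^{*}W$ in $\mathbf{Ho}(F^{m}\!\Rep_{\infty}(\pi_{\infty}BG_{n\sbullet}))$. Because $\varphi_{n\sbullet}^{*}$ is quasi fully faithful on $F^{m}$, this isomorphism lifts to an isomorphism $\Wcal(V) \cong W$ in $\mathbf{Ho}(\Loc_{\infty}(BG))$, as required.

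For the second assertion, I would first invoke Theorem A to replace $\theta$ by the pullback connection $f^{*}\omega$ at the cost of an $\A_{\infty}$-natural isomorphism $\CW_{\theta} \Rightarrow \CW_{f^{*}\omega}$. The naturality of the Chern-Weil construction under pullback of principal bundles identifies $\CW_{f^{*}\omega} = f^{*} \circ \CW_{\omega}^{(n)}$ as DG functors. Postcomposing with $\Ical$ and invoking the naturality of the higher Riemann-Hilbert correspondence, which supplies an $\A_{\infty}$-natural isomorphism $\Ical \circ f^{*} \Rightarrow f_{\sbullet}^{*} \circ \Ical^{(n)}$, gives the chain
$$
\Ical \circ \CW_{\theta} \;\cong\; f_{\sbullet}^{*} \circ \Ical^{(n)} \circ \CW_{\omega}^{(n)} \;=\; f_{\sbullet}^{*} \circ \Wcal^{(n)} \;=\; f_{\sbullet}^{*} \circ \varphi_{n\sbullet}^{*} \circ \Wcal \;=\; (\varphi_{n} \circ f)_{\sbullet}^{*} \circ \Wcal,
$$
assembled by using Lemma \ref{lem:2.2} to whisker the relevant $\A_{\infty}$-natural isomorphisms. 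The hardest part is the bookkeeping in the first argument: because Proposition \ref{prop:4.14} controls cohomology only up to a degree that depends on $n$, the quasi-isomorphism property of $\Wcal_{1}$ must be established degree by degree with $n$ adapted to each $l$, which is the place where the passage to the direct limit $BG = \varinjlim BG_{n}$ on the topological side plays its essential role.
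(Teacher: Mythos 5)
Your proposal is correct; its second and third parts coincide with the paper's argument (quasi essential surjectivity via Lemma \ref{lem:4.11}, Proposition \ref{prop:4.15} and the quasi-equivalence $\Ical^{(n)}$; the natural isomorphism via the naturality squares for $\omega$ and for integration, Theorem A, and Lemma \ref{lem:2.2}), but your treatment of quasi full faithfulness takes a genuinely different route. The paper works directly on $BG$: it compares the spectral sequences of Lemmas \ref{lem:4.10} and \ref{lem:4.12} and identifies the induced map of second pages with the universal Weil homomorphism $\uH^{\sbullet}((\W\gfrak)_{\bas}) \to \uH^{\sbullet}(BG)$ tensored with the identity on $\Hcal^{\sbullet}(\Hom(V,V'))$, which is an isomorphism for compact connected $G$. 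You instead factor through the finite stages, writing $(\varphi_{n\sbullet}^{*})_1 \circ \Wcal_1 = \Ical^{(n)}_1 \circ (\CW_{\omega}^{(n)})_1$ and cancelling the quasi-isomorphism $(\varphi_{n\sbullet}^{*})_1$; since Proposition \ref{prop:4.14} only controls cohomology in a range depending on $n$, and $\Wcal_1$ itself is independent of $n$, running this degree by degree with $n$ adapted to $l$ does give the full quasi-isomorphism. Your route avoids invoking the cohomology of $BG$ beyond what is already packaged in Proposition \ref{prop:4.14}, at the cost of extra bookkeeping; note, however, that it leans on the quasi-fully-faithful half of Lemma \ref{lem:4.11} (whose proof in the paper is only spelled out at the level of homotopy categories), whereas the paper's direct spectral-sequence argument does not. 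One small caution on the last part: you propose to whisker a natural isomorphism $\Ical \circ f^{*} \Rightarrow f_{\sbullet}^{*} \circ \Ical^{(n)}$, but Lemma \ref{lem:2.2} only covers postcomposition of a transformation between DG functors by a DG functor; the intended reading, and the one the paper uses, is that this square of integration functors commutes strictly, so that the only natural isomorphism to be transported is the one between $\CW_{\theta}$ and $\CW_{f^{*}\omega}$ supplied by Theorem A.
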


\begin{proof}
We first prove that the functor $\A_{\infty}$-functor $\Wcal \colon \InfLoc_{\infty}(\gfrak) \to \Loc_{\infty}(BG)$ is $\A_{\infty}$-quasi fully faithful. Let $V$ and $V'$ two objects of $\InfLoc_{\infty}(\gfrak)$. By definition, we have to show that
$$
\Wcal \colon (\W\gfrak \otimes \Hom(V,V'))_{\bas} \longrightarrow \underline{\Hom}^{\sbullet}(\Wcal(V),\Wcal(V'))
$$
is a quasi-isomorphism. Notice that this morphism is compatible with the canonical filtrations on $(\W\gfrak \otimes \Hom(V,V'))_{\bas}$ and $\underline{\Hom}^{\sbullet}(\Wcal(V),\Wcal(V'))$, and induces a morphism between their associated spectral sequences. It therefore suffices to verify that the induced morphism between the second pages of the spectral sequences is an isomorphism. On the one hand, recall from Lemma~\ref{lem:4.12} that the second page of the spectral sequence associated with the canonical filtration on $(\W\gfrak \otimes \Hom(V,V'))_{\bas}$ is
$$
\Ecal_2^{p,q} = \uH^{p}((\W \gfrak)_{\bas}) \otimes \Hcal^{q}(\Hom(V,V')). 
$$
On the other hand, Lemma~\ref{lem:4.10} says that the second page of the spectral sequence associated with the canonical filtration on $\underline{\Hom}^{\sbullet}(\Wcal(V),\Wcal(V'))$ is
$$
\Ecal'^{p,q}_2 = \uH^{p}(\pi_{\infty} BG_{\sbullet}, \Hcal^{q}(\Wcal(V),\Wcal(V'))). 
$$
In addition, since $BG$ is simply connected, the graded vector bundle $\Hcal^{\sbullet}(\Wcal(V),\Wcal(V'))$ is trivial with fiber $\Hcal^{\sbullet}(\Hom(V,V'))$, from which it follows that
$$
\Ecal'^{p,q}_2 \cong \uH^{p}(\pi_{\infty} BG_{\sbullet}) \otimes  \Hcal^{q}(\Hom(V,V')) = \uH^{p}(BG) \otimes  \Hcal^{q}(\Hom(V,V')). 
$$
Consequently, the induced morphism between the second pages of the spectral sequence is
$$
\Wcal \colon \uH^{p}((\W \gfrak)_{\bas}) \otimes \Hcal^{q}(\Hom(V,V')) \longrightarrow \uH^{p}(BG) \otimes  \Hcal^{q}(\Hom(V,V')),
$$
and it is given by the tensor product of the universal Weil homomorphism for $BG$ with the identity on $\Hcal^{q}(\Hom(V,V'))$. Since the former is an isomorphism, the desired assertion follows. 

We next show that the $\A_{\infty}$-functor $\Wcal \colon \InfLoc_{\infty}(\gfrak) \to \Loc_{\infty}(BG)$ is $\A_{\infty}$-quasi essentially surjective. Let us fix an object $(E,F_{\sbullet})$ of $\Loc_{\infty}(BG)$. Then there exists a sufficiently large integer $m > 0$ such that $(E,F_{\sbullet})$ belongs to $F^{m} \! \Loc_{\infty}(BG)$. Since $\V_k(\CC^{n})$ is $2(n-k)$-connected, from the long exact homotopy sequence, we infer that, if $2m < 2(n-k)$, the canonical map $\varphi_n \colon BG_n \to BG$ induces an isomorphism in homotopy group up to degree $2m$. Invoking Lemma~\ref{lem:4.11}, we conclude that the pullback functor $\varphi_{n \sbullet}^* \colon \Loc_{\infty}(BG) \to \Rep_{\infty}(\pi_{\infty} BG_{n\sbullet})$ induces an equivalence between the homotopy categories of   $F^{m} \! \Loc_{\infty}(BG)$ and $F^{m} \! \Rep_{\infty}(\pi_{\infty} BG_{n\sbullet})$. On the other hand, by construction, we have the following commutative diagram of $\A_{\infty}$-functors
$$
\xymatrix{  F^{m} \!\InfLoc_{\infty}(\gfrak) \ar[r]^-{\Wcal} \ar[d]_-{\CW_{\omega}^{(n)}} & F^{m} \! \Loc_{\infty}(BG) \ar[d]^-{\varphi_{n \sbullet}^*} \\
F^{m} \!\Loc_{\infty}(BG_n) \ar[r]^-{\Ical^{(n)}} & F^{m} \! \Rep_{\infty}(\pi_{\infty} BG_{n\sbullet})}
$$
where $\varphi_{n \sbullet}^*$ is the pullback DG functor along the induced simplicial map $\varphi_{n\sbullet} \colon \pi_{\infty} BG_{n\sbullet} \to \pi_{\infty} BG_{\sbullet}$. Thus, since $\Ical^{(n)}$ is an $\A_{\infty}$-quasi-equivalence, it will be enough to show that $\CW_{\omega}^{(n)}$ is a quasi-essentially surjective. But, if $n$ is sufficiently large, this is true by Proposition~\ref{prop:4.15}. 

Finally, we prove the second assertion. To this end, we first notice that by the naturallity of the integration $\A_{\infty}$-functors $\Ical^{(n)}$ and $\Ical$, the following diagram commutes
$$
\xymatrix{\Loc_{\infty}(BG_n) \ar[r]^-{f^*} \ar[d]_-{\Ical^{(n)}} &  \Loc_{\infty}(X) \ar[d]^-{\Ical} \\
 \Rep_{\infty}(\pi_{\infty}BG_{n\sbullet}) \ar[r]^-{f_{\sbullet}^*}& \Rep_{\infty}(\pi_{\infty}X_{\sbullet}),}
$$
where $f_{\sbullet}^*$ is the pullback DG functor along the induced simplicial map $f_{\sbullet} \colon \pi_{\infty}X_{\sbullet} \to \pi_{\infty}BG_{n\sbullet}$. Besides, by the naturality of the connection $\omega$, we get a commutative diagram
$$
\xymatrix{\Loc_{\infty}(BG_n) \ar[rr]^-{f^*} & & \Loc_{\infty}(X) \\
& \InfLoc_{\infty}(\gfrak).  \ar[ul]^-{\CW_{\omega}^{(n)}} \ar[ur]_-{\CW_{f^*\omega}} &}
$$
Combining these last two with the above commutative diagram gives
$$
(\varphi_n \circ f)_{\sbullet}^* \circ \Wcal = f_{\sbullet}^* \circ \varphi_{n\sbullet}^* \circ \Wcal =  f_{\sbullet}^* \circ \Ical^{(n)} \circ \CW_{\omega}^{(n)} = \Ical \circ f^* \circ \CW_{\omega}^{(n)} = \Ical \circ \CW_{f^*\omega}. 
$$
On the other hand, according to Theorem~A, there is an $\A_{\infty}$-natural isomorphism between $\CW_{\theta}$ and $\CW_{f^*\omega}$. Since $\Ical$ is an $\A_{\infty}$-quasi-equivalence, the desired conclusion now follows from Lemma~\ref{lem:2.2}.
\end{proof}


\begin{bibdiv}
\begin{biblist}

\bib{Abad-Schatz2013}{article}{
      author={Abad, Camilo~Arias},
      author={Sch{\"a}tz, Florian},
       title={The $\mathsf{A}_{\infty}$ de {R}ham theorem and integration of
  representations up to homotopy},
        date={2013},
     journal={Int. Math. Res. Notices},
      volume={2013},
      number={16},
       pages={3790\ndash 3855},
}

\bib{abadflat}{article}{
      author={Abad, Camilo~Arias},
      author={Sch{\"a}tz, Florian},
       title={Flat $\mathbb{Z}$-graded connections and loop spaces},
        date={2018},
     journal={International Mathematics Research Notices},
      volume={2018},
      number={4},
       pages={961\ndash 1008},
}

\bib{abad2020singular}{article}{
      author={Abad, Camilo~Arias},
      author={V{\'e}lez, Alexander~Quintero},
       title={Singular chains on {L}ie groups and the {C}artan relations {II}},
        date={2020},
     journal={arXiv preprint arXiv:2007.07934},
}

\bib{arias2019singular}{article}{
      author={Arias~Abad, Camilo},
       title={Singular chains on {L}ie groups and the {C}artan relations {I}},
     journal={Mathematische Annalen, to appear.},
       pages={arXiv:1908.10460},
}

\bib{AriasAbad-Crainic2012}{article}{
      author={Arias~Abad, Camilo},
      author={Crainic, Marius},
       title={Representations up to homotopy of {L}ie algebroids},
        date={2012},
     journal={J. Reine Angew. Math. (Crelle's Journal)},
      volume={2012},
      number={663},
       pages={91\ndash 126},
}

\bib{CAA-AQV-SVV2019}{article}{
      author={Arias~Abad, Camilo},
      author={Quintero~V{\'e}lez, Alexander},
      author={V{\'e}lez~V{\'a}squez, Sebasti{\'a}n},
       title={An $\mathsf{A}_{\infty}$ version of the {P}oincar{\'e} lemma},
        date={2019},
     journal={Pac. J. Math.},
      volume={302},
      number={2},
       pages={385\ndash 412},
}

\bib{Block-Smith2014}{article}{
      author={Block, Jonathan},
      author={Smith, Aaron~M.},
       title={The higher {R}iemann-{H}ilbert correspondence},
        date={2014},
        ISSN={0001-8708},
     journal={Adv. Math.},
      volume={252},
       pages={382\ndash 405},
  url={http://www.sciencedirect.com/science/article/pii/S0001870813004118},
}

\bib{Goerss-Jardine2009}{book}{
      author={Goerss, Paul~G.},
      author={Jardine, John~F.},
       title={Simplicial homotopy theory},
   publisher={Springer Science \& Business Media},
        date={2009},
}

\bib{guillemin}{book}{
      author={Guillemin, Victor~W},
      author={Sternberg, Shlomo},
       title={Supersymmetry and equivariant de rham theory},
   publisher={Springer Science \& Business Media},
        date={2013},
}

\bib{Hatcher}{book}{
      author={Hatcher, Allen},
       title={Algebraic topology},
   publisher={Cambridge University Press},
        date={2005},
}

\bib{Holstein2015}{inproceedings}{
      author={Holstein, Julian~V.},
       title={Morita cohomology},
organization={Cambridge University Press},
        date={2015},
   booktitle={Math. {P}roc. {C}amb. {P}hilos. {S}oc.},
      volume={158},
       pages={1\ndash 26},
}

\bib{Jardine1987}{article}{
      author={Jardine, John~Frederick},
       title={Simplical presheaves},
        date={1987},
     journal={Journal of Pure and Applied Algebra},
      volume={47},
      number={1},
       pages={35\ndash 87},
}

\bib{Joyal-Tierney2008}{unpublished}{
      author={Joyal, Andr{\'e}},
      author={Tierney, Myles},
       title={Notes on simplicial homotopy theory},
        date={2008},
  note={http://mat.uab.cat/~kock/crm/hocat/advanced-course/Quadern47.pdf},
}

\bib{Keller2006}{incollection}{
      author={Keller, Bernhard},
       title={On differential graded categories},
        date={2006},
   booktitle={International {C}ongress of {M}athematicians. {V}ol. {II}},
   publisher={Eur. Math. Soc.},
     address={Z\"{u}rich},
       pages={151\ndash 190},
}

\bib{StricklandMathoverflow}{unpublished}{
      author={MathOverflow},
       title={Why does the singular simplicial space geometrically realize to
  the original space?},
  note={https://mathoverflow.net/questions/171662/why-does-the-singular-simplicial-space-geometrically-realize-to-the-original-spa},
}

\bib{Metha-Zambon2012}{article}{
      author={Mehta, Rajan~Amit},
      author={Zambon, Marco},
       title={${L}_{\infty}$-algebra actions},
        date={2012},
     journal={Differ. Geom. Appl.},
      volume={30},
      number={6},
       pages={576\ndash 587},
}

\bib{Meinrenken2013}{book}{
      author={Meinrenken, Eckhard},
       title={Clifford algebras and Lie theory},
      series={Ergebnisse der Mathematik und ihrer Grenzgebiete., 3. Folge},
   publisher={Springer},
        date={2013},
        ISBN={9783642362156,364236215X},
  url={http://gen.lib.rus.ec/book/index.php?md5=64d43e6769aa306f09fdcfea781484ea},
}

\bib{reinhold2019algebras}{article}{
      author={Reinhold, Ben},
       title={L-∞-algebras and their cohomology},
        date={2019},
     journal={Emergent Scientist},
      volume={3},
       pages={4},
}

\bib{Ruschoff2017}{unpublished}{
      author={R\"{u}schoff, Christian},
       title={Lecture notes: Simplicial sets},
        date={2017},
  note={https://www.mathi.uni-heidelberg.de/~rueschoff/ss17sset/sset.pdf},
}

\end{biblist}
\end{bibdiv}
\addcontentsline{toc}{section}{Bibliography}

\end{document}